\let\csname ver@amsthm.sty\endcsname\relax
\let\theoremstyle\relax
\theoremstyle{plain}
\newtheorem{theorem}{Theorem}[section]
\newtheorem{proposition}[theorem]{Proposition}
\newtheorem{lemma}[theorem]{Lemma}
\newtheorem{corollary}[theorem]{Corollary}
\newtheorem{conjecture}[theorem]{Conjecture}
\newtheorem{question}[theorem]{Question}
\theoremstyle{definition}
\newtheorem{definition}[theorem]{Definition}
\newtheorem{remark}[theorem]{Remark}
\newtheorem{example}[theorem]{Example}
\numberwithin{figure}{section}
\numberwithin{equation}{section}
\def\@myMR[#1 #2]{\relax\ifhmode\unskip\spacefactor3000 \space\fi
  \MRhref{#1}{MR\,#1}}
\renewcommand\MR[1]{\@myMR[#1 ]}
\renewcommand{\MRhref}[2]{{\tiny%
  \href{http://www.ams.org/mathscinet-getitem?mr=#1}{#2}}%
}
\renewcommand*{\backref}[1]{}
\renewcommand*{\backrefalt}[4]{%
    \tiny%
    ({
    \ifcase #1 not cited%
          \or cit.\ on p.~#2%
          \else cit.\ on pp.~#2%
    \fi%
    })\\[-.6em]}
\def\maketitle{\par
  \@topnum\z@ 
  \@setcopyright
  \thispagestyle{empty}
  \ifx\@empty\shortauthors \let\shortauthors\shorttitle
  \else \andify\shortauthors
  \fi
  \@maketitle@hook
  \begingroup
  \@maketitle
  \toks@\@xp{\shortauthors}\@temptokena\@xp{\shorttitle}%
  \toks4{\def\\{ \ignorespaces}}
  \edef\@tempa{%
    \@nx\markboth{\the\toks4
      \@nx\MakeUppercase{\the\toks@}}{\the\@temptokena}}%
  \@tempa
  \endgroup
  \c@footnote\z@
    \renewcommand{\footnoterule}{%
      \kern -3pt
      \hrule width \textwidth height .5pt
      \kern 2pt
    }
  {
    \renewcommand\thefootnote{}
    \vspace{-2em}
    \footnote{
      \par\vspace{-1.2em}\noindent
      \def\@footnotetext##1{\noindent{\footnotesize##1}\par}%
      \let\@makefnmark\relax  \let\@thefnmark\relax
      \ifx\@empty\@date\else \@footnotetext{\@setdate}\fi
      \ifx\@empty\@subjclass\else \@footnotetext{\@setsubjclass}\fi
      \ifx\@empty\@keywords\else \@footnotetext{\@setkeywords}\fi
      \ifx\@empty\thankses\else \@footnotetext{%
        \def\par{\let\par\@par}\@setthanks}%
      \fi
    }
    \addtocounter{footnote}{-1}
  }
  \@cleartopmattertags
}
\def\@adminfootnotes{\@empty}
\def\@settitle{\begin{center}%
  \baselineskip14\p@\relax
    \bfseries
\Large
  \@title
  \end{center}%
}
\def\@setauthors{%
  \begingroup
  \def\thanks{\protect\thanks@warning}%
  \trivlist
  \centering\footnotesize \@topsep30\p@\relax
  \advance\@topsep by -\baselineskip
  \item\relax
  \author@andify\authors
  \def\\{\protect\linebreak}%
  \large{\authors}%
  \ifx\@empty\contribs
  \else
    ,\penalty-3 \space \@setcontribs
    \@closetoccontribs
  \fi
  \endtrivlist
  \endgroup
}
\def\@setaddresses{\par
  \nobreak \begingroup
\footnotesize
  \def\author##1{\end{minipage}\hskip 1sp \begin{minipage}{.5\textwidth}\raggedright%
    ~\\[2em]{\bf##1}\\[.5em]%
  }%
  \interlinepenalty\@M
  \def\address##1##2{\begingroup
    {\ignorespaces##2}\endgroup\\[.5em]}%
  \def\curraddr##1##2{\begingroup
    \@ifnotempty{##2}{\nobreak\indent\curraddrname
      \@ifnotempty{##1}{, \ignorespaces##1\unskip}\/:\space
      ##2\par}\endgroup}%
  \def\email##1##2{\begingroup
    \@ifnotempty{##2}{\nobreak\indent
      \@ifnotempty{##1}{, \ignorespaces##1\unskip}
      \ttfamily##2\par}\endgroup}%
  \def\urladdr##1##2{\begingroup
    \def~{\char`\~}%
    \@ifnotempty{##2}{\nobreak\indent\urladdrname
      \@ifnotempty{##1}{, \ignorespaces##1\unskip}\/:\space
      \ttfamily##2\par}\endgroup}%
  \setlength{\parindent}{0pt}%
  \vfill%
  {
  \begin{minipage}{0mm}
  \addresses
  \end{minipage}
  }
  \endgroup
}
\renewcommand{\author}[2][]{%
  \ifx\@empty\authors
    \gdef\authors{#2}%
    \g@addto@macro\addresses{\author{#2}}%
  \else
    \g@addto@macro\authors{\and#2}%
    \g@addto@macro\addresses{\author{#2}}%
  \fi
  \@ifnotempty{#1}{%
    \ifx\@empty\shortauthors
      \gdef\shortauthors{#1}%
    \else
      \g@addto@macro\shortauthors{\and#1}%
    \fi
  }%
}
\edef\author{\@nx\@dblarg
  \@xp\@nx\csname\string\author\endcsname}
\def\@secnumfont{\@empty}
\def\section{\@startsection{section}{1}%
  \z@{.7\linespacing\@plus\linespacing}{.5\linespacing}%
  {\large\bfseries\centering}}
\title{ Splitting the cohomology of Hessenberg varieties and $e$-positivity of chromatic symmetric functions}
\author{Alex Abreu and Antonio Nigro}
\address{
    Instituto de Matemática e Estatística\\
    Universidade Federal Fluminense\\
    Rua Prof. M. W. de Freitas, S/N\\
    24210-201 Niterói, Rio de Janeiro, Brasil
}
\email{alexbra1@gmail.com, antonio.nigro@gmail.com}
\definecolor{forestgreen}{rgb}{0.13, 0.55, 0.13}
\newcommand{\col}{\colon}
\newcommand{\m}{\mathbf{m}}
\newcommand{\ind}{i}
\DeclareMathOperator{\Id}{Id}
\DeclareMathOperator{\Ima}{Im}
\DeclareMathOperator{\ch}{ch}
\DeclareMathOperator{\wt}{wt}
\DeclareMathOperator{\LLT}{LLT}
\DeclareMathOperator{\IncTree}{IncTree}
\DeclareMathOperator{\csf}{csf}
\DeclareMathOperator{\asc}{asc}
\DeclareMathOperator{\exc}{exc}
\DeclareMathOperator{\Poin}{Poin}
\DeclareMathOperator{\Gr}{Gr}
\newcommand{\C}{\mathbb{C}}  
\newcommand{\flag}{\mathcal{B}}
\newcommand{\h}{\mathcal{Y}}
\newcommand{\dw}{\dot{w}}
\newcommand{\bF}{\textbf{f}}
\newcommand{\bH}{\textbf{h}}
\begin{document}

\maketitle

\begin{abstract}
    For each indifference graph, there is an associated regular semisimple Hessenberg variety, whose cohomology recovers the chromatic symmetric function of the graph. The decomposition theorem applied to the forgetful map from the regular semisimple Hessenberg variety to the projective space describes the cohomology of the Hessenberg variety as a sum of smaller pieces. We give a combinatorial description of the Frobenius character of each piece. This provides a generalization of the symmetric functions attached to Stanley's local $\bH$-polynomials of the permutahedral variety to any Hessenberg variety. \par 
    
    As a consequence, we can prove that the coefficient of $e_{\lambda}$, where $\lambda$ is any partition of length $2$, in the $e$-expansion of the chromatic symmetric function of any indifference graph is non-negative. 
\end{abstract}

\tableofcontents

\section{Introduction}

The $\bH$-vector $(\bH_0,\ldots, \bH_n)$ of an abstract simplicial complex $\Delta$ of dimension $n-1$ is defined by the equation
\[
\sum_{i=0}^n \bF_{i-1}(q-1)^{n-i}=\sum_{i=0}^n \bH_iq^{n-i}
\]
where $\bF_i$ is the number of $i$-dimensional faces of $\Delta$ (with the usual convention  $\bF_{-1}=1$, unless $\Delta=\emptyset$). The $\bH$-polynomial of $\Delta$ is defined as
\[
\bH(\Delta;q)=\sum_{i=0}^n \bH_iq^i.
\]

If $\Sigma$ is a simplicial fan of $\mathbb{R}^n$ of dimension $n$, we associate to $\Sigma$ the simplicial complex of dimension $n-1$ where for each cone $F\in \Sigma$ we associate the set of its rays. We can define the $\bH$-vector of $\Sigma$ via the formula
\[
\sum_{i=0}^n \bF_{i}(q-1)^{n-i}=\sum_{i=0}^n \bH_iq^{n-i}
\]
where $\bF_i$ is the number of cones in $\Sigma$ of dimension $i$.

\begin{figure}[h]
\centering

\begin{minipage}{0.48\linewidth}
\centering
\begin{tikzpicture}[scale=2]
\draw[->] (0,0) -- (1,0);
\draw[->] (0,0) -- (0, 1);
\draw[->] (0,0) -- (-0.7, -0.7);
\draw[fill] (0,0) circle (1pt);
\end{tikzpicture}
        \caption{This fan has one $0$-dimensional cone, three $1$-dimensional cones and three $2$-dimensional cones. Its $\bF$-vector is $(1,3,3)$, while $(1,1,1)$ is its $\bH$-vector.}
\label{fig:P2}
\end{minipage}
\begin{minipage}{0.48\linewidth}
\centering
\begin{tikzpicture}[scale=2]
\draw[->] (0,0) -- (1,0);
\draw[->] (0,0) -- (0, 1);
\draw[->] (0,0) -- (0, -1);
\draw[->] (0,0) -- (-1, 0);
\draw[->] (0,0) -- (0.7, 0.7);
\draw[->] (0,0) -- (-0.7, -0.7);
\draw[fill] (0,0) circle (1pt);
\end{tikzpicture}
\caption{The $\bF$-vector is $(1,6,6)$, the $\bH$-vector is $(1,4,1)$.}
\label{fig:Bl_P2}
\end{minipage}
\end{figure}

The $\bH$-vector has a deep connection with geometry via toric varieties (see \cite{Stanley87}). The $\bH$-vector of a complete simplicial fan $\Sigma$ concides precisely with the (even) Betti numbers of the corresponding
toric variety $X(\Sigma)$. Indeed, the toric variety associated to the fan in Figure \ref{fig:P2} is $\mathbb{P}^2$, which has Betti numbers $(1,0,1,0,1)$ (the odd Betti numbers vanish). Likewise, the toric variety associated to the fan in Figure \ref{fig:Bl_P2} is the variety $\mathbb{L}_2$ obtained by blowing up $\mathbb{P}^2$  along the three coordinate points, which has Betti numbers $(1,0,4,0,1)$. We note that the fan in Figure \ref{fig:Bl_P2} is a refinement of the fan in Figure \ref{fig:P2}. This refinement induces a map between the corresponding toric varieties, which in this case is the natural map $\mathbb{L}_2\to \mathbb{P}^2$.\par 

Given a simplicial subdivision of the simplicial complex $\Delta$, Stanley (\cite{Stanley92}) introduced the notion of \emph{local $\bH$-vector}, which he used to strengthen some results around McMullen's $g$-conjecture. We will translate the definitions and results to fans, for the sake of simplicity. Let $\Sigma'$ be a simplicial refinement of a simplicial fan $\Sigma$. For each cone $F\in \Sigma$, let $\Sigma'_F$ denote the fan whose cones are the cones of $\Sigma'$ contained in $F$. Then, the local  $\bH$-polynomials $\ell_F(\Sigma'_F;q)$ are defined via the formula
\[
\bH(\Sigma'_F;q)=\sum_{W\preceq F}\ell_W(\Sigma'_W;q).
\]
Note that the local $\bH$-polynomial with respect to a cone $F$, only depends on $\Sigma'_F$ (explaining the adjective \emph{local}). Moreover, since the formula above is for any cone $F$, we see that it indeed determines $\ell_F(\Sigma'_F;q)$.

For example, in the refinement given by Figures \ref{fig:P2} and \ref{fig:Bl_P2}, we have that $\ell_F(\Sigma'_F;q)=1$ if $F=\{0\}$, $\ell_F(\Sigma'_F;q)=0$ if $\dim(F)=1$, and $\ell_F(\Sigma'_F;q)=q$ if $\dim(F)=2$.

These local $\bH$-vectors can be recovered by geometry (see \cite{ Stanley92}). Recall that for a finite dimensional graded vector space $W=\bigoplus_{i\in \mathbb{Z}} W_i$, its Hilbert-Poincaré polynomial is defined by $\Poin(W):=\sum_{i\in \mathbb{Z}}q^{\frac{i}{2}}\dim(W_i)$. Consider the map $f\col X(\Sigma')\to X(\Sigma)$. We have a stratification of this map given by the invariant subvarieties of $X(\Sigma)$, which are of the form $V(F)$ for a cone $F\in \Sigma$. For the case of simplicial toric varieties the decomposition theorem applied to $f$ has the form
\begin{equation}
\label{eq:decomp_local_h_vector}
H^*(X(\Sigma')) = \bigoplus_{F\in \Sigma}H^*(V(F))\otimes L_F,
\end{equation}
where $L_F$ are finite dimensional graded vector spaces. Then, the local $\bH$-polynomial $\ell_F(\Sigma'_F;q)$ is precisely the Hilbert-Poincaré polynomial of $L_F$.\par 

Indeed, the decomposition theorem applied to the map $f\col \mathbb{L}_2\to\mathbb{P}^2$ has the form
\begin{equation}
\label{eq:decomp_L2P2}
H^*(\mathbb{L}_2)=H^*(\mathbb{P}^2)\oplus (H^*(p_1)\otimes \mathbb{C}[-2])\oplus (H^*(p_2)\otimes \mathbb{C}[-2])\oplus (H^*(p_3)\otimes \mathbb{C}[-2]),
\end{equation}
where $\mathbb{C}[-2]$ is the graded vector space of dimension $1$  and degree $2$ (so its Hilbert-Poincaré polynomial is $q$) and  $p_1,p_2,p_3$ are the three coordinate points of $\mathbb{P}^2$, which are the invariant subvarieties corresponding to the three maximal cones of $\Sigma$. This recovers the local $\bH$-polynomials we already computed.

Stanley also considered generalizations of the $\bH$-polynomial as symmetric functions. Indeed, consider the action of $S_{n+1}$ on $\mathbb{R}^{n}$ permuting the vectors $\Vec{e}_1,\ldots, \Vec{e}_{n}$ and $\Vec{e}_{n+1}:=-(\Vec{e}_1+\ldots+\Vec{e}_{n})$. If $\Sigma$ is a fan invariant by the action of $S_{n+1}$, then we have an action of $S_{n+1}$ on $X(\Sigma)$, and can consider the graded Frobenius character of its cohomology. 

 Recall that if $W = \bigoplus_{\lambda\vdash n+1} L_{\lambda}\otimes W_\lambda$ is a graded $S_{n+1}$-module, where $L_{\lambda}$ is the simple $S_{n+1}$-module induced by $\lambda$ and each $W_\lambda$ is a finite dimensional graded vector space,  we define the graded Frobenius character of $W$ 
\[
\ch(W):=\sum_{\lambda}\Poin(W_{\lambda})s_{\lambda},
\]
where  $s_{\lambda}$ is the Schur symmetric function associated to $\lambda$. We also denote by $h_{j}$ the complete homogeneous symmetric function of degree $j$ and set $h_\lambda:=\prod h_{\lambda_i}$ for every partition $\lambda$.

For instance, the symmetric function $\bH$-polynomials of the fans in Figures \ref{fig:P2} and \ref{fig:Bl_P2} are $(q^2+q+1)h_3$ and $(q^2+q+1)h_3+qh_{2,1}$, respectively. Note that for an inclusion $S_d\to S_{n+1}$, we can compute the graded Frobenius characters with respect to the action of $S_d$ on $\mathbb{R}^{n}$.

The construction of the symmetric function generalization of the local $\bH$-polynomial is a little more involved. Let $F$ be a cone in $\mathbb{R}^{n}$ of dimension $d$, generated by $\Vec{e}_1,\ldots, \Vec{e}_d$. There is a natural action of $S_d$ on $\mathbb{R}^{n}$ permuting the rays $\Vec{e}_1,\ldots, \Vec{e}_d$ and fixing $\Vec{e}_{d+1},\ldots, \Vec{e}_n$, moreover this action fixes $F$. Consider $\Sigma'_F$ a subdivision of $F$ that is invariant by the action of $S_d$, in particular if $W,W'\preceq F$ are faces of $F$ of the same dimension, then $\Sigma'_W$ and $\Sigma'_{W'}$ are isomorphic. 

Moreover, for each $W\preceq F$ of dimension $k$ generated by  $\Vec{e}_{i_1},\ldots, \Vec{e}_{i_k}$, we have an action of $S_k$ on $W$. Since $\Sigma'_F$ is invariant by $S_d$, we have that $\Sigma'_{W}$ is invariant by $S_k$. In particular, we have that each $\Sigma'_W$ has a symmetric function generalization $\bH(\Sigma'_W;q,x)$ of the $\bH$-polynomial, which has degree in $x$ equal to the dimension $k$ of $W$. We define a symmetric function generalization of the local $\bH$-polynomials by the formula
\[
\bH(\Sigma'_F;q,x) = \sum_{k\leq d} \ell_{W_k}(\Sigma'_{W_k};q,x)h_{d-k},
\]
where $W_k$ is any face of dimension $k$. Note that $\bH(\Sigma'_{W_k};q,x)$ does not depend on the face of dimension $k$, hence $\ell_{W_k}(\Sigma'_{W_k};q,x)$ depends only on $k$.

In particular, we have an equivariant version of Equation \eqref{eq:decomp_local_h_vector}.  The summation is now over dimensions rather than  faces, $H^*(V(F))$ is considered as an $S_{\dim(F)}$-module, $L_F$ is considered as an $S_{n-\dim(F)}$-module and we have to make an induction from $S_{\dim(F)}\times S_{n-\dim(F)}$ to $S_n$. For example, Equation \ref{eq:decomp_L2P2} would become
\[
H^*(\mathbb{L}_2)=H^*(\mathbb{P}^2)\oplus \ind_{S_{2}\times S_1}^{S_3}(H^*(p_1)\otimes L),
\]
where $H^*(p_1)$ is a trivial $S_2$-module and $L$ is a trivial $S_1$-module in degree 2, so the Frobenius charater of $\ind_{S_{2}\times S_1}^{S_3}(H^*(p_1)\otimes L)$ is $qh_{2,1}$.

We will now focus on two fans. The fan associated to $\mathbb{P}^{n-1}$ and its first barycentric subdivision, which we will denote, respectively, by $\Sigma_n$ and $\Sigma'_n$ (Figures \ref{fig:P2} and \ref{fig:Bl_P2} show the case $n=3$). Note that $X(\Sigma'_n)$ is the so called \emph{permutahedral variety} (it is also the Losev-Manin space $\mathbb{L}_{n-1}$), also $\Sigma'_n$ is the fan of Coxeter chambers of $S_n$. The symmetric function generalization of the $\bH$-vector of $\Sigma'_n$ can be computed as in \cite[Equation (16)]{Stanley92}, and its generating function is
\[
F_1(x;q,z):=\frac{\sum_{n\geq 0}h_nz^n}{1-\sum_{n\geq 2} q[n-1]_qh_nz^n}. 
\]
The symmetric function generalization of the local $\bH$-vector of a cone $F\in \Sigma_n$ with respect to the refinement $\Sigma_n'$ only depends on the dimension of $F$ and not on $n$ (as the barycentric subdivision is recursive). 
Its generating function is  computed in \cite[Equation (17)]{Stanley92} as
\begin{equation}
    \label{eq:F2}
F_2(x;q,z):=\frac{1}{1-\sum_{n\geq 2}q[n-1]_qh_nz^n }.
\end{equation}

The generating functions $F_1, F_2$ arises in different settings. For example, they are the generating function of  Eulerian quasisymmetric functions  and multiset derangements (see \cite{ShareshianWachs_Eulerian}), respectively. More important to us, is the relation of $F_1$ with chromatic symmetric functions\footnote{See also Appendix \ref{app:Face_LLT} for a relation of the face vector with the LLT polynomial.}, which we are now going to explain.

Stanley's chromatic symmetric function of a graph $G$ is defined as
\[
\csf(G;x):=\sum_{\kappa\colon V(G)\to \mathbb{N}}\prod_{v\in V(\Gamma)} x_{\kappa(v)}
\]
where the sum runs through all proper colorings $\kappa\colon V(G)\to \mathbb{N}$, that is, $\kappa(v)\neq \kappa(w)$, whenever $v$ and $w$ are neighbors in $G$. This symmetric function is a generalization the chromatic polynomial of $G$.\par

In \cite{ShareshianWachs}, Shareshian--Wachs introduced a $q$-deformation of $\csf(G;x)$ defined for graphs with vertex set $[n]$:
\[
\csf_q(G;x,q) := \sum_{\kappa\colon [n]\to \mathbb{N}}q^{\asc_G(\kappa)}\prod_{j\in [n]}x_{\kappa(j)}
\]
where $\asc_G(\kappa) := |\{(i,j); i<j\leq \m(i), \kappa(i)<\kappa(j)\}$ is the number of $G$-ascents of the coloring $\kappa$. The function $\csf_q(G;x;q)$ is usually just a quasisymmetric function, but it is symmetric for certain classes of graphs.

An indifference graph $G$ is a graph that has vertex set $[n]:=\{1,\ldots, n\}$ for some $n\in \mathbb{N}$ and there exists a non-decreasing function $\m\colon[n]\to[n]$ satisfying $\m(i)\geq i$ for every $i\in [n]$ (these functions are called Hessenberg functions) such that $E(G)=\{\{i,j\};i<j\leq \m(i)\} $. We write $G_\m$ for the indifference graph induced by $\m$. If $G$ is an indifference graph, then $\csf_q(G;x,q)$ is a symmetric function. The path graph $P_n$ on $n$ vertices is the indifference graph associated to the Hessenberg function $\m(i)=\min(i+1,n)$ for $i\in [n]$.

Recall that the ring of symmetric functions has a natural involution $\omega$ that takes the complete homogeneous symmetric functions $h_j$ to the elementary symmetric functions $e_j$. In their work, Shareshian--Wachs noticed that the omega dual of $F_1$ is also the generating function of the chromatic quasisymmetric function of the path graph on $n$-vertices (see \cite{ShareshianWachs}), that is
\[
\sum \csf_q(P_n;x,q)z^n = \frac{\sum_{n\geq 0}e_nz^n}{1-\sum_{n\geq 2} q[n-1]_qe_nz^n}. 
\]

In particular the chromatic quasisymmetric function of the path graph on $n$ vertices is the omega dual of the Frobenius character of the cohomology of the permutahedral variety associated to $S_n$. 
The permutahedral variety is a special case of a Hessenberg variety, which is defined as follows. Given a Hessenberg function $\m$  and a $n\times n$ matrix $X$, the \emph{Hessenberg Variety} associated to $\m$ and $X$ is defined as
\[
\h_\m(X) := \{V_\bullet; XV_i\subset V_{\m(i)}\text{ for }i\in [n]\}\subset \flag,
\]
where $\flag$ is the flag variety of $\mathbb{C}^n$. When $X$ is regular semisimple, we say that $\h_\m(X)$ is a \emph{regular semisimple} Hessenberg variety. \par  

The cohomology of a regular semisimple Hessenberg variety has a natural structure of $S_n$-module given by Tymozcko \emph{dot action} (\cite{Tym08}). Based on the result for the path graph, Shareshian--Wachs conjectured that the chromatic quasisymetric function of $G_\m$ is equal to the omega dual of the graded Frobenius character of the cohomology of $\h_\m(X)$ for $X$ regular semisimple.  This conjecture has since been proven by Brosnan--Chow (\cite{BrosnanChow}) and Guay-Pacquet (\cite{GP}).\par 

Hessenberg varieties are usually not toric varieties, so they do not have an associated fan, but still, in analogy with the case of the permutahedral variety, one could think of the Frobenius character of its cohomology as the analogue of the symmetric function generalization of its $\bH$-polynomial. We could ask:
\begin{question}
\label{ques:local_h}
    What is the analogue for Hessenberg varieties of  the local $\bH$-polynomial and its symmetric function generalization? 
\end{question}

We recall that the local $\bH$-polynomial is defined for a subdivision, which in geometric terms means a map between toric varieties, and can be computed as the Poincaré polynomial of the local systems appearing in the decomposition theorem. We will construct an analogue of the local $\bH$-polynomial and its symmetric function generalization by applying the decomposition theorem to the forgetful map
\begin{align*}
    \h_\m(X)& \to \Gr(1,n)=\mathbb{P}^{n-1}\\
     V_\bullet & \mapsto V_1.
\end{align*}

We will just focus on the symmetric function version, as the polynomial can be easily recovered from the symmetric function. The first observation is that we need the decomposition to be $S_n$-equivariant. With that in mind, we take the time now to explain how to construct the action of $S_n$ on the cohomology of regular semisimple Hessenberg varieties. We follow Brosnan--Chow (\cite{BrosnanChow}) and \cite{AN_hecke} (see also \cite{ChaShvI}).

 We define the relative Hessenberg variety as
\[
\h_\m := \{(X,V_\bullet); XV_i\subset V_{\m(i)}\text{ for }i\in [n]\}\subset GL_n(\mathbb{C})\times \flag,
\]
we set $G:=GL_n(\mathbb{C})$ and we denote by $G^{rs}$ the open subset of $G$ consisting of regular semisimple invertible matrices, that is, invertible matrices with $n$ distinct eigenvalues. We have a natural map $\pi_1(G^{rs},X)\to S_n$ from the fundamental group of $G^{rs}$ to the symmetric group $S_n$ induced by the action of $\pi_1(G^{rs},X)$ on the set of eigenvalues of $X$. In particular, for every partition $\lambda$ of $n$ the associated simple $S_n$-module $L_{\lambda}$ induces a simple local system $\mathcal{L}_{\lambda}$ on $G^{rs}$. \par

Define $\h_\m^{rs}:=\h_\m\cap (G^{rs}\times \flag)$ and let $g\colon \h_{\m}^{rs} \to G^{rs}$ be the projection onto the first factor. Let $\mathbb{C}_{\h_{\m}^{rs}}$ be the trivial sheaf on $\h_{\m}^{rs}$ with fiber $\mathbb{C}$. Since the map $g$ is smooth (see \cite[Section 8.2]{BrosnanChow}) then the complex $Rg_*(\C_{\h_{\m}^{rs}})$ can be written as a sum of shifted local systems on $G^{rs}$ by Deligne's decomposition theorem (\cite{DLef69,Dhodge2}). By \cite[Corollary 119]{BrosnanChow}, we have that this decomposition is given by $Rg_*(\C_{\h_{\m}^{rs}}) = \bigoplus_{\lambda\vdash n} \mathcal{L}_{\lambda}\otimes W_\lambda(\m)$, where each $W_\lambda(\m)$ is a graded vector space. Alternatively, we say that $Rg_*(\C_{\h_\m^{rs}})$ is induced by a graded $S_n$-module (the $S_n$-module $\bigotimes_{\lambda\vdash n}L_\lambda\otimes W_{\lambda}(\m)$). This also means that the cohomology of $\h_{\m}(X):=g^{-1}(X)$ carries a natural action of $S_n$ (which is precisely Tymozcko's dot action, see \cite[Theorem 124]{BrosnanChow}).

 The Shareshian-Wachs conjecture can be reformulated as  
\[
\omega(\csf_q(G_\m;x,q))=\ch(Rg_*(\mathbb{C}_{\h_\m^{rs}})).
\]

The main proposal of this paper is to consider the forgetful map $f\colon \h_{\m}^{rs} \to G^{rs}\times \Gr(1,n)$ and to compute $Rf_*(\mathbb{C}_{\h_m^{rs}})$. Unlike $g$, the map $f$ is not smooth, so we will need the decomposition theorem of Beilinson, Bernstein and Deligne \cite{BBD}. First, we define the subvarieties of $G^{rs}\times \Gr(1,n)$ appearing in the decomposition theorem. Set
\begin{equation}
\label{eq:Hi}
\mathcal{H}_k := \{(X,V_1);\text{ there exists } V_{n-k}\text{ such that } V_1\subset V_{n-i}, XV_{n-k}=V_{n-k}\}\subset G^{rs}\times \Gr(1,n)    
\end{equation}
and $\mathcal{H}_k^\circ:=\mathcal{H}_k\setminus \bigcup_{j=k+1}^{n-1}\mathcal{H}_j$. Then, we have natural maps $\pi_1(\mathcal{H}_k^\circ, (X,V_1))\to S_k$ (see \cite[Proposition 7.4]{AN_parabolic}), in particular an $S_k$-module induces a local system on $\mathcal{H}_k^\circ$. By the classification of parabolic character sheaves on $G^{rs}\times \Gr(1,n)$ in \cite[Corollary 1.9]{AN_parabolic} (parabolic character sheaves were introduced by Lusztig in \cite{LusztigParabolicI}), we have that the decomposition theorem for $f$ can be written as
\begin{equation}
\label{eq:RF_decomp}
Rf_*(\C_{\h_\m^{rs}})=\bigoplus_{k=0}^{n-1}IC_{\mathcal{H}_k}(\mathcal{L}_{k})
\end{equation}
where each local system $\mathcal{L}_{k}$ is induced by a graded $S_k$-module $L_k$ (by restricting to a point $X\in G^{rs}$, we have that the decomposition theorem applied to the map  $\h_m(X)\to \mathbb{P}^{n-1}$ will give an $S_n$-equivariant decomposition). Our main result is a combinatorial description of the Frobenius character $\ch(L_k)$ of $L_k$. The symmetric function $\ch(L_k)$ is our answer to Question \ref{ques:local_h}.

To better state the combinatorial part, we recall some results of Stanley \cite{Stan95} and \cite{ANtree}. When $G$ is an indifference graph, we have the following result of Stanley, which express $\csf(G;x)$ in terms of the power-sum symmetric functions:

\begin{equation}
\label{eq:Stanley_p}
\csf(\m;x):=\csf(G_\m;x) = \sum_{\sigma\in S_{n,\m}}\omega(p_{\lambda(\sigma)})
\end{equation}
where $S_{n,\m}$ is the subset of the symmetric group $S_n$ consisting of permutations $\sigma$ such that $\sigma(i)\leq \m(i)$, and $\lambda(\sigma)$ is the cycle type of $\sigma$.

Equation \eqref{eq:Stanley_p} has also a $q$-analogue proved in \cite{ANtree}. We define $\rho_n$ as the symmetric function defined recursively by $[n]_qh_n=\sum_{i=0}^{n-1} h_i\rho_{n-i}$. The symmetric function $\rho_n$ is a $q$-deformation of $p_n$ (see \cite[Section 2]{ANtree} for other properties of $\rho_n$). 
For a permutation $\sigma\in S_{n}$, we write $\sigma = \tau_1\cdots \tau_k$ for its cycle decomposition, written in such a way that each cycle begins with its smallest element, and the cycles are ordered in an increasing way according to their smallest elements. We also write $\sigma^c$ to denote the permutation obtained from $\sigma$ by removing the parenthesis in the cycle decomposition\footnote{For example, if $\sigma=(154)(2639)(78)$ then  $\sigma^c=154263978$.}.  Finally, we define $\wt_\m(\sigma)$ as the number of $\m$-inversions of $\sigma^c$, that is, the number of pairs $(i,j)$ such that $i<j\leq \m(i)$ and $(\sigma^c)^{-1}(i)>(\sigma^c)^{-1}(j)$. The main result of \cite{ANtree} states that
\[
\csf_q(G_\m;x,q) = \sum_{\sigma=\tau_1\cdots\tau_k\in S_{n,\m}}q^{\wt_\m(\sigma)}\omega(\prod \rho_{|\tau_i|}).
\]

We are ready to enunciate the main theorem of this paper, that is proved in Section \ref{sec:geom}.

\begin{theorem}
\label{thm:main}
   Write 
   \[
   f_*(\mathbb{C}_{\h_\m^{rs}})=\bigoplus_{k=0}^{n-1}IC_{\mathcal{H}_k}(\mathcal{L}_k)
   \]
   where each $\mathcal{L}_k$ is induced by a graded $S_k$-module $L_k$. Then
   \[
   \omega(\ch(L_k)) = \sum_{\substack{\sigma=\tau_1\cdots \tau_{j}\in S_{n,\m}\\ |\tau_1|\geq n-k}} (-1)^{|\tau_1|-n
+k}q^{\wt_\m(\sigma)}h_{|\tau_1|-n+k}\omega(\rho_{|\tau_2|}\cdots \rho_{|\tau_j|}).
   \]
\end{theorem}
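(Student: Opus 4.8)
The plan is to compute $\ch\bigl(f_*(\mathbb{C}_{\h_\m^{rs}})\bigr)$ in two ways and compare, using the known $q$-analogue of Stanley's formula and the classification of parabolic character sheaves. On one side, restricting to a point $X\in G^{rs}$ turns $f$ into the forgetful map $\h_\m(X)\to\mathbb{P}^{n-1}$, whose pushforward cohomology is just $H^*(\h_\m(X))$; so by the Shareshian--Wachs conjecture (now a theorem of Brosnan--Chow, Guay-Paquet) together with the $\rho$-expansion from \cite{ANtree}, we have
\[
\omega\bigl(\ch(f_*(\mathbb{C}_{\h_\m^{rs}}))\bigr)=\csf_q(G_\m;x,q)=\sum_{\sigma=\tau_1\cdots\tau_j\in S_{n,\m}}q^{\wt_\m(\sigma)}\,\omega\bigl(\rho_{|\tau_1|}\cdots\rho_{|\tau_j|}\bigr).
\]
On the other side, Equation \eqref{eq:RF_decomp} expresses the same pushforward as $\bigoplus_{k}IC_{\mathcal{H}_k}(\mathcal{L}_k)$, so I need the Frobenius character of $\ch\bigl(IC_{\mathcal{H}_k}(\mathcal{L}_k)\bigr)$ in terms of $\ch(L_k)$. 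This is where the geometry of $\mathcal{H}_k$ enters: since $\mathcal{H}_k$ fibers (generically) over an open subset of $G^{rs}\times\Gr(n-k,n)$ with the universal sub-$\mathbb{P}^{n-k-1}$-bundle, and $L_k$ is a local system with monodromy factoring through $S_k$ acting on the eigenvalues of the stabilized $(n-k)$-dimensional invariant subspace, the character $\ch\bigl(IC_{\mathcal{H}_k}(\mathcal{L}_k)\bigr)$ should be obtained from $\ch(L_k)$ by inducing up along $S_k\times S_{n-k}\hookrightarrow S_n$ and multiplying by the Poincaré polynomial of $\mathbb{P}^{n-k-1}$, i.e. by $[n-k]_q\,h_{n-k}$ after an $\omega$, so that
\[
\omega\bigl(\ch(IC_{\mathcal{H}_k}(\mathcal{L}_k))\bigr)=[n-k]_q\,h_{n-k}\cdot\omega(\ch(L_k))\quad\text{(up to the precise grading shift from }IC\text{)}.
\]
I would extract the exact normalization from \cite[Corollary 1.9]{AN_parabolic} and the description of $\mathcal{H}_k^\circ$ in \cite[Proposition 7.4]{AN_parabolic}.

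Combining the two computations gives a triangular system: writing $G_k:=\omega(\ch(L_k))$, we get
\[
\sum_{\sigma=\tau_1\cdots\tau_j\in S_{n,\m}}q^{\wt_\m(\sigma)}\,\omega(\rho_{|\tau_1|})\cdots\omega(\rho_{|\tau_j|})=\sum_{k=0}^{n-1}[n-k]_q\,h_{n-k}\,G_k
\]
(again modulo bookkeeping the shifts). The recursion defining $\rho_n$, namely $[n]_q h_n=\sum_{i=0}^{n-1}h_i\rho_{n-i}$, is precisely what I need to invert the operator ``multiply the first-cycle contribution by $[n-k]_q h_{n-k}$'': it says $\rho_m$ itself can be written as an alternating sum $\rho_m=\sum_{i}(-1)^{?}\,[\,\cdot\,]_q\,h_{i}(\dots)$, or more usefully, that $[m]_q h_m-\rho_m=\sum_{i=1}^{m-1}h_i\rho_{m-i}$. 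I would solve for $G_k$ by grouping the terms of the left-hand sum according to the size $|\tau_1|$ of the first cycle: for a fixed value $|\tau_1|=m\ge n-k$, the contribution of the tail $\tau_2\cdots\tau_j$ is $q^{\wt_\m(\text{tail part})}\omega(\rho_{|\tau_2|}\cdots\rho_{|\tau_j|})$, and one peels off the factor coming from $\tau_1$ against the $[n-k]_q h_{n-k}$'s via the defining recursion for $\rho$, producing exactly the alternating binomial-free sign $(-1)^{|\tau_1|-n+k}$ and the single extra factor $h_{|\tau_1|-n+k}$ appearing in the statement. The $q$-weight matches because $\wt_\m(\sigma)$ only depends on $\sigma^c$ and is additive over the concatenation once $\tau_1$ is separated off (this compatibility needs a small check, but it is implicit in the $\wt_\m$ being counted on $\sigma^c$).

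The main obstacle, I expect, is pinning down the exact equivariant normalization of $\ch\bigl(IC_{\mathcal{H}_k}(\mathcal{L}_k)\bigr)$ — i.e. getting the degree shifts in the $IC$ sheaf and the induction $S_k\times S_{n-k}\to S_n$ exactly right, so that the ``multiply by $[n-k]_q h_{n-k}$'' description is literally correct rather than correct up to a monomial in $q$. Once that normalization is fixed, the remaining work is a formal manipulation of symmetric-function generating series driven entirely by the recursion $[n]_q h_n=\sum_{i} h_i\rho_{n-i}$, and the sign $(-1)^{|\tau_1|-n+k}$ is forced by solving that recursion for the leading $\rho$ factor. A secondary technical point is verifying that $\wt_\m$ behaves additively when the first cycle is split off — this should follow from the definition of $\wt_\m$ as $\m$-inversions of $\sigma^c$ together with the fact that, within $S_{n,\m}$, the first cycle $\tau_1$ contains $1$ and its support forms an initial-type block compatible with the Hessenberg condition, so no $\m$-inversions are lost or created by the split.
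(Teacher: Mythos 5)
Your plan collapses the decomposition theorem down to a single identity, and that identity is not enough information to recover the $\ch(L_k)$.

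Concretely: after pushing all the way to a point of $G^{rs}$ (i.e.\ taking the total cohomology of the fiber $\h_\m(X)\to\mathbb{P}^{n-1}$), your two computations produce exactly one relation, namely
\[
\omega\bigl(\csf_q(G_\m;x,q)\bigr)=\sum_{k=0}^{n-1}[n-k]_q\,h_{n-k}\,\ch(L_k),
\]
where each $\ch(L_k)$ is an unknown symmetric function of $x$-degree $k$. This is one equation in $n$ unknowns with no grading to separate the terms, and the map $(\ch(L_0),\ldots,\ch(L_{n-1}))\mapsto\sum_k[n-k]_qh_{n-k}\ch(L_k)$ is not injective already for $n=3$ (the coefficients of $h_2h_1$ coming from $h_2\cdot\ch(L_1)$ and from $h_1\cdot\ch(L_2)$ cannot be disentangled). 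Your ``peeling off $\tau_1$'' manipulation does show that if one \emph{defines} $G_k$ by the right-hand side of the theorem, then $\omega(\csf_q)=\sum_i[i]_qe_i G_{n-i}$ --- which is Proposition~\ref{prop:csf_from_g} of the paper --- but it does not and cannot show $G_k=\omega(\ch(L_k))$, because nothing in the single identity forces the $\ch(L_k)$ to be those particular functions. This is the missing idea, not a bookkeeping issue about normalizations.

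The paper avoids the collapse by restricting the equality $Rf_*(\mathbb{C}_{\h_\m^{rs}})=\bigoplus_i IC_{\mathcal{H}_i}(\mathcal{L}_i)$ to each locally closed stratum $\mathcal{H}_j^\circ$ \emph{before} pushing to $G^{rs}$. Since $\mathcal{H}_j^\circ$ meets $\mathcal{H}_i$ only for $i\le j$, this produces a genuinely triangular system: the $j$-th relation involves only $L_0,\ldots,L_j$, and Lemma~\ref{lemLi_restricted_Lj} identifies the contribution of $IC_{\mathcal{H}_i}(\mathcal{L}_i)|_{\mathcal{H}_j^\circ}$ as $h_{j-i}\,\ch(L_i)$. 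The left-hand side of the $j$-th relation is computed via the cell decomposition of Proposition~\ref{prop:properties_hmktau}, which stratifies $\h_\m^{rs}\cap f^{-1}(\mathcal{H}_j^\circ)$ into affine bundles over smaller relative Hessenberg varieties indexed by increasing subtrees. Secondly, note that your heuristic for $\ch(R\pi_{1*}IC_{\mathcal{H}_k}(\mathcal{L}_k))$ treats $\mathcal{H}_k$ as if the $\mathbb{P}^{n-k-1}$-bundle description held globally; it does not (the $\mathcal{H}_k$ are singular and stratified), and a rigorous computation of that pushforward already requires knowledge of the $IC$ stalks on the smaller strata $\mathcal{H}_j^\circ$, $j>k$. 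So even the $[n-k]_qh_{n-k}$ factor, while it does turn out to be correct, is only justified \emph{a posteriori} through the very stratified analysis your proposal skips.

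Finally, your secondary worry --- additivity of $\wt_\m$ after splitting off $\tau_1$ --- is a real point and is in fact where Lemma~\ref{lemsigma_inversion} and the increasing-tree bijection of Lemma~\ref{lembijection_sigma_T} enter; but it is a local combinatorial check, whereas the uniqueness/triangularity issue above is the structural gap.
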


Based on the main theorem, we define symmetric functions $g_k(\m;x,q)$ which will be the main combinatorial object of this article.
  
  \begin{definition}
  \label{def:gk}
Let $\m\col [n]\to[n]$ be a Hessenberg function, for each integer $k<n$ we define
\[
g_k(\m;x,q):=\sum_{\substack{\sigma=\tau_1\cdots \tau_{j}\in S_{n,\m}\\ |\tau_1|\geq n-k}} (-1)^{|\tau_1|-n
+k}q^{\wt_\m(\sigma)}h_{|\tau_1|-n+k}\omega(\rho_{|\tau_2|}\cdots \rho_{|\tau_j|}).
\]
We also write $g_k(G_\m;x,q):=g_k(\m;x,q)$.
\end{definition}

\begin{example}
    When $\m\colon[n]\to[n]$ is given by $\m(i)=n$, that is, $G_\m$ is the complete graph with $n$ vertices, we have that $g_k(\m;x,q)=0$ for every $0<k<n$ and $g_0(\m;x,q)=(n-1)!_qe_0$. Indeed, the Hessenberg variety $\h_\m^{rs}$ is equal to $G^{rs}\times \flag$, which means that the forgetful map $\h_\m^{rs}\to G^{rs}\times \Gr(1,n)$ is a bundle whose fibers are isomorphic to the flag variety of $\mathbb{C}^{n-1}$.
\end{example}

\begin{example}
    When $\m = (2,4,4,5,6,6)$, we have the following expressions for $g_k(\m;x,q)$:
    \begin{align*}
        g_0(\m;x,q)& = (q+1)e_0, &  g_1(\m;x,q)& = qe_1, &     g_2(\m;x,q)& = (q^2+q)e_2, \\ 
        g_3(\m;x,q)& = q^2e_3, &     g_4(\m;x,q)& = (q^3+q^2)e_4,\\ 
        g_5(\m;x,q)& = \mathrlap{ (q^4+3q^3+q^2)e_{3,2} + (q^4+q^3+q^2)e_{4, 1} + (q^5+2q^4+2q^3+2q^2+q)e_5, }
    \end{align*}
    while the chromatic quasisymmetric function is given by
    \begin{align*}
    \csf_q(\m;x,q)=& (q^4+3q^3+q^2)e_{3, 2, 1} + (q^4+q^3+q^2)e_{3, 3} + (q^4+q^3+q^2)e_{4, 1, 1} + \\ 
    & (q^5+3q^4+4q^3+3q^2+q)e_{4, 2} + (2q^5+3q^4+3q^3+3q^2+2q)e_{5, 1} +\\ 
    & (q^6+2q^5+2q^4+2q^3+2q^2+2q+1)e_{6}.
    \end{align*}
\end{example}

Theorem \ref{thm:main} has the following direct corollary.
\begin{corollary}
 The symmetric functions $g_k(\m;x,q)$ are Schur-positive for every Hessenberg function $\m$.
\end{corollary}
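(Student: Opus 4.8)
The plan is to read this off Theorem \ref{thm:main}. By Definition \ref{def:gk}, the symmetric function $g_k(\m;x,q)$ is by construction the right-hand side of the identity in Theorem \ref{thm:main}, so $g_k(\m;x,q)=\omega(\ch(L_k))$, where $L_k$ is the graded $S_k$-module from the decomposition \eqref{eq:RF_decomp}. The crucial input is that $L_k$ is a \emph{genuine} graded $S_k$-module, not merely a virtual one: this is exactly what \eqref{eq:RF_decomp} asserts, and it rests on the decomposition theorem of \cite{BBD} applied to the (perverse, after the appropriate shift) constant sheaf on the smooth variety $\h_\m^{rs}$, together with the classification of parabolic character sheaves on $G^{rs}\times\Gr(1,n)$ from \cite[Corollary 1.9]{AN_parabolic}, which guarantees that the summands are honest shifted $IC$-complexes $IC_{\mathcal{H}_k}(\mathcal{L}_k)$ with each $\mathcal{L}_k$ induced by an actual $S_k$-representation on a graded vector space.

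Granting this, the argument is immediate. Write $\ch(L_k)=\sum_{\lambda\vdash k}\Poin((L_k)_\lambda)\,s_\lambda$. Since $L_k$ is a genuine module, each isotypic multiplicity space $(L_k)_\lambda$ is a genuine graded vector space; by purity of the decomposition theorem (and the even-degree concentration coming from the geometry of the Hessenberg variety and of the strata $\mathcal{H}_k$, just as in the permutahedral case) its grading lies in non-negative even degrees, so that $\Poin((L_k)_\lambda)\in\mathbb{N}[q]$. Hence $\ch(L_k)$ is Schur-positive, i.e.\ its coefficients in the Schur basis lie in $\mathbb{N}[q]$.

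Finally, apply $\omega$. Since $\omega$ is a $\mathbb{Z}[q]$-linear ring involution with $\omega(s_\lambda)=s_{\lambda'}$ for every partition $\lambda$, it permutes the Schur basis and therefore carries Schur-positive symmetric functions to Schur-positive symmetric functions. Consequently $g_k(\m;x,q)=\omega(\ch(L_k))$ is Schur-positive for every Hessenberg function $\m$ and every $k<n$. There is essentially no obstacle beyond Theorem \ref{thm:main} itself; the only point requiring care is the bookkeeping of the grading convention on $L_k$, so that $\Poin(L_k)$ is an honest element of $\mathbb{N}[q]$ rather than of $\mathbb{N}[q^{1/2}]$, which is ensured by the purity and even-degree vanishing recalled above.
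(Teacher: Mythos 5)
Your proposal is correct and matches the paper's (implicit) argument: the corollary is stated as a direct consequence of Theorem \ref{thm:main}, and the reasoning is exactly that $g_k(\m;x,q)=\omega(\ch(L_k))$ where $L_k$ is an honest graded $S_k$-module, so $\ch(L_k)$ is Schur-positive, and $\omega$ permutes the Schur basis. The extra care you take regarding purity and even-degree concentration (to keep $\Poin(L_k)$ in $\mathbb{N}[q]$ rather than $\mathbb{N}[q^{1/2}]$) is a reasonable point to flag but does not change the structure of the argument.
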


Moreover, the chromatic quasisymmetric function can be recovered from the symmetric functions $g_k$.

\begin{theorem}
\label{thm:csf_from_g}
For every Hessenberg function $\m\col[n]\to[n]$ we have that
\[
\csf_q(G_;x,q)=\sum_{k=1}^n[n-k]_qe_{n-k}g_k(\m;x,q).
\]
\end{theorem}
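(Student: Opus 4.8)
The plan is to expand the right-hand side using Definition~\ref{def:gk}, interchange the two summations so that permutations $\sigma\in S_{n,\m}$ (rather than $k$) become the outer index, and then compare the result against the $q$-analogue of Stanley's power-sum formula from \cite{ANtree} \emph{one permutation at a time}. This reduces the theorem to a single identity among symmetric functions that involves neither $n$ nor $\m$, which can then be checked by a short generating-function manipulation.

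First I would substitute Definition~\ref{def:gk} into $\sum_{k}[n-k]_q e_{n-k}\,g_k(\m;x,q)$ and swap the order of summation. Fix $\sigma=\tau_1\cdots\tau_j\in S_{n,\m}$ and put $m=|\tau_1|$. The terms of the various $g_k$ that involve $\sigma$ are exactly those with $k\ge n-m$, and after the change of variable $a=|\tau_1|-n+k$ the total coefficient of $q^{\wt_\m(\sigma)}\,\omega(\rho_{|\tau_2|}\cdots\rho_{|\tau_j|})$ contributed by $\sigma$ becomes
\[
\sum_{a=0}^{m}(-1)^{a}[m-a]_q\,e_{m-a}\,h_{a},
\]
the $a=m$ summand vanishing since $[0]_q=0$; crucially this no longer depends on $n$ or $\m$. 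On the other hand, since $\omega$ is a ring homomorphism, the main result of \cite{ANtree} can be written as $\csf_q(G_\m;x,q)=\sum_{\sigma=\tau_1\cdots\tau_j\in S_{n,\m}}q^{\wt_\m(\sigma)}\,\omega(\rho_{|\tau_1|})\,\omega(\rho_{|\tau_2|}\cdots\rho_{|\tau_j|})$. Matching the two expansions summand by summand over $S_{n,\m}$, the theorem follows once we prove, for every $m\ge 1$,
\[
\sum_{a=0}^{m}(-1)^{a}[m-a]_q\,e_{m-a}\,h_{a}=\omega(\rho_m).
\]

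To establish this last identity I would pass to generating series in a formal variable $z$ with coefficients in the ring of symmetric functions. Writing $E(z)=\sum_i e_iz^i$ and $H(z)=\sum_i h_iz^i$, the classical relation $E(z)H(-z)=1$ together with $[i]_q=(q^i-1)/(q-1)$ shows that the left-hand side has generating function $\bigl(\sum_i[i]_q e_iz^i\bigr)H(-z)=\tfrac{1}{q-1}\bigl(E(qz)/E(z)-1\bigr)$. On the other hand, the recursion $[n]_q h_n=\sum_{i=0}^{n-1}h_i\rho_{n-i}$ defining $\rho_n$ says precisely that $\tfrac{1}{q-1}\bigl(H(qz)-H(z)\bigr)=H(z)\sum_{k\ge 1}\rho_k z^k$, whence $\sum_{k\ge 1}\rho_k z^k=\tfrac{1}{q-1}\bigl(H(qz)/H(z)-1\bigr)$; applying $\omega$ coefficientwise (it fixes $q$ and $z$ and interchanges $H$ and $E$) gives $\sum_{k\ge 1}\omega(\rho_k)z^k=\tfrac{1}{q-1}\bigl(E(qz)/E(z)-1\bigr)$, the same series as before. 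Extracting the coefficient of $z^{m}$ proves the identity, and plugging it back into the reorganized sum collapses each $\sigma$-summand to $q^{\wt_\m(\sigma)}\,\omega(\rho_{|\tau_1|}\rho_{|\tau_2|}\cdots\rho_{|\tau_j|})$; summing over $S_{n,\m}$ recovers $\csf_q(G_\m;x,q)$.

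The generating-function computation is short, so the only delicate point is the bookkeeping in the reduction step: correctly determining which $k$ contribute to a given $\sigma$, carrying out the reindexing $a=|\tau_1|-n+k$, and handling the boundary — in particular the contribution of the full $n$-cycles (those with $|\tau_1|=n$), which is what dictates the exact range of the outer summation. It is also quite possible that the displayed symmetric-function identity is already recorded among the properties of $\rho_n$ in \cite[Section~2]{ANtree}, in which case the third paragraph reduces to a citation.
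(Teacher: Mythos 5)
Your proposal is correct and is essentially the same argument as in the paper's Proposition~\ref{prop:csf_from_g}, just run in the opposite direction: the paper starts from the \cite{ANtree} power-sum expansion of $\csf_q$, substitutes the identity $\omega(\rho_m)=\sum_{i=1}^m(-1)^{m-i}[i]_q e_i h_{m-i}$ (Equation~\eqref{eq:rho_eh}) for the factor $\omega(\rho_{|\tau_1|})$, and rearranges to recognize the $g_k$; you expand the $g_k$ side, reindex over $\sigma$, and reduce to the same identity. Your generating-function derivation of that identity is correct (and indeed it is simply the $\omega$-dual form of the defining recursion for $\rho_n$, which the paper records as Equation~\eqref{eq:rho_eh}), so the only difference is that you re-derive a fact the paper cites.
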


When we consider the path graph $P_n$ with $n$-vertices, we can give more explicit characterizations of $g_k(P_n;x,q)$. First, we have that $g_k(P_n;x,q)=g_k(P_{k+1};x,q)$ for every $k<n$ (this is a consequence of Proposition \ref{prop:gk_invariant}). Second, the generating function for $g_k(P_{k+1};x,q)$ is precisely $\omega(F_2)$. In particular, this generating function gives a combinatorial interpretation of the coefficients of $g_k(P_n;x,q)$ in the monomial bases in terms of derangements (see Proposition \ref{prop:path_derangements}). The generating function above also proves that  $g_k(P_n;x,q)$ is $e$-positive.

The Stanley-Stembridge conjecture (\cite{StanStem}) states that $\csf(G_\m;x)$ is $e$-positive for every indifference graph $G_\m$.  Shareshian-Wachs (\cite{ShareshianWachs})  extended this conjecture to $\csf_q(G_\m;x,q)$), which is also equivalent to a previous conjecture of Haiman (\cite{Haiman}, see also \cite{AN_haiman}). Via Theorem \ref{thm:csf_from_g}, the $e$-positivity of $g_k(G_\m;x,q)$ for every $k=0,\ldots, n-1$ would imply the $e$-positivity of $\csf_q(G_\m;x,q)$ and, hence, the Stanley-Stembridge conjecture.

\begin{conjecture}
The symmetric functions $g_k(G_\m;x,q)$ are $e$-positive for every Hessenberg function $\m\col [n]\to[n]$ and every $k=0,\ldots, n-1$.
\end{conjecture}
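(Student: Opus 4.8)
The plan is to prove the conjecture by induction on $n$, by establishing a positive \emph{splitting recursion} for the symmetric functions $g_k(\m;x,q)$ whose base case is the path graph. Recall (Proposition~\ref{prop:gk_invariant} and the discussion after Theorem~\ref{thm:csf_from_g}) that $g_k(P_n;x,q)=g_k(P_{k+1};x,q)$ and that the generating function of the latter is $\omega(F_2)$; since
\[
F_2(x;q,z)=\frac{1}{1-\sum_{n\geq 2}q[n-1]_q h_n z^n}=\sum_{m\geq 0}\Bigl(\sum_{n\geq 2}q[n-1]_q h_n z^n\Bigr)^{m}
\]
is a non-negative combination of the $h_\mu$, its $\omega$-image is $e$-positive, so the path case is already in hand. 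For a general Hessenberg function $\m\col[n]\to[n]$ I would aim to prove an identity
\[
g_k(\m;x,q)=\sum_{\mu,\,\m'} c_{\mu,\m'}(q)\; e_\mu\cdot g_{k'}(\m';x,q),
\]
in which each $\m'$ is a Hessenberg function on $n'<n$ vertices, $0\leq k'<n'$, and each $c_{\mu,\m'}(q)\in\mathbb{N}[q]$. Because $e_\mu\cdot e_\nu=e_{\mu\cup\nu}$, a product of $e$-positive symmetric functions is $e$-positive, so such a recursion together with the trivial cases $n\leq 2$ would close the induction; the path-graph expansion above then serves to calibrate the coefficients $c_{\mu,\m'}(q)$ and as a consistency check.

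\textbf{Producing the recursion.} There are two complementary routes, which I would run in parallel. Geometrically, $\mathcal{H}_k^\circ$ fibers over the finite set of $X$-invariant $(n-k)$-planes $U$ — these are coordinate subspaces for the eigenbasis, since $X$ is regular semisimple — with fibers open in $\mathbb{P}(U)$, and over the locus $V_1\subset U$ the relative Hessenberg variety $\h_\m^{rs}$ decomposes according to how the Hessenberg condition couples the flag to $U$. One expects this to give a Künneth-type factorization of the graded $S_k$-module $L_k$ into a factor governed by the Hessenberg function that $\m$ induces on $U$ and a transverse factor whose Frobenius character can be read off and matched against the $\omega(F_2)$-expansion, producing the $c_{\mu,\m'}(q)$. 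Combinatorially, the same recursion should be visible directly on Definition~\ref{def:gk}: grouping $\sigma=\tau_1\cdots\tau_j\in S_{n,\m}$ by the cycle $\tau_1$ through the vertex $1$ and by the Hessenberg function that $\m$ induces on $[n]\setminus\operatorname{supp}(\tau_1)$, the factor $\omega(\rho_{|\tau_2|}\cdots\rho_{|\tau_j|})$ together with the corresponding part of $q^{\wt_\m(\sigma)}$ becomes precisely the $\rho$-expansion of the chromatic quasisymmetric function of the smaller indifference graph, so that $g_k(\m)$ is literally a signed sum of first-cycle contributions times smaller $g$'s. The combinatorial route has the advantage of making the sign structure explicit, which is what the final step must confront.

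\textbf{The main obstacle.} The hard part is the signs: the defining sum for $g_k$ carries $(-1)^{|\tau_1|-n+k}$, so $e$-positivity requires genuine cancellation — there is no term-by-term positivity, already because passing from the power-sum–type data $\omega(\rho_{|\tau_i|})$ to the $e$-basis has no positive meaning on a single term. Concretely I would look for a $q^{\wt_\m}$-preserving, sign-reversing involution on pairs (a permutation $\sigma\in S_{n,\m}$, its marked cycle decomposition) that cancels every contribution with $|\tau_1|>n-k$ against part of the contributions with $|\tau_1|=n-k$, leaving a remainder that is manifestly $e$-positive — in the best case a generating function over Dyck-path/LLT objects in the spirit of Appendix~\ref{app:Face_LLT}, or over $P$-tableaux, each contributing a single monomial $q^{\bullet}e_\mu$, i.e.\ an intrinsically $e$-positive model for $g_k$ analogous to the $F_2$-expansion in the path case. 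As a warm-up and a source of the correct pairings I would first reprove the abstract's length-two statement (non-negativity of the $e_\lambda$-coefficient of $\csf_q(G_\m;x,q)$ when $\ell(\lambda)=2$) from Theorems~\ref{thm:main} and~\ref{thm:csf_from_g}, extract the involution it uses, and attempt to globalize it. A fallback is to combine the splitting recursion with the modular-law relations satisfied by $\csf_q$, and hence by the $g_k$ via Theorem~\ref{thm:csf_from_g}, to reduce an arbitrary $\m$ to a short list of base cases; but, exactly as for the Stanley–Stembridge conjecture itself, the danger is that these linear relations do not suffice to reach only Hessenberg functions for which $g_k$ is already understood, so an honest positive model — or the involution above — is very likely unavoidable.
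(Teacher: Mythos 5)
This statement is a \emph{conjecture} in the paper, not a theorem: the paper does not prove it, and as far as the text goes it remains open (the paper proves only the much weaker Theorem~\ref{thm:g_ek_positive}, giving non-negativity of the single coefficient of $e_k$ in $g_k(\m;x,1)$, i.e.\ at $q=1$ and for one specific partition). So there is no proof in the paper to compare your attempt against.

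Your proposal is also not a proof: it is an explicitly conditional research plan, and you are upfront about where the gaps are. Two things are correct and consistent with the paper. First, the path-graph base case is indeed settled, by Proposition~\ref{prop:gk_invariant}, Definition~\ref{def:gk_k>n}, and Corollary~\ref{cor:gen_function_g_path}: $g_k(P_n;x,q)=g_k(\bullet;x,q)$ has generating function $\omega(F_2)$, which is manifestly $e$-positive. Second, a recursion of the form
\[
g_k(\m;x,q)=\sum_{\mu,\,\m'} c_{\mu,\m'}(q)\; e_\mu\cdot g_{k'}(\m';x,q)
\]
with $c_{\mu,\m'}(q)\in\mathbb{N}[q]$ and $\m'$ on strictly fewer vertices would indeed close an induction, since products of $e$-positive functions are $e$-positive. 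But you do not establish that such a recursion exists, and nothing in the paper supplies it. The paper's only ``positive'' recursion (Proposition~\ref{prop:gn-recursion} and Theorem~\ref{thm:gk_generating_function}) is \emph{horizontal}: it expresses $g_n(\m)$ for the \emph{same} $\m$ in terms of $g_{k}(\m)$ with $k<n$, so it cannot serve as the inductive step on the number of vertices. Similarly, the sign-reversing involution you propose is precisely the missing content; the paper's own attempt in this direction (the injection $\Delta\col S_1\to S_2$ in the proof of Theorem~\ref{thm:g_ek_positive}) only handles the $e_k$-coefficient and, as the paper notes, is not even $q$-weight-preserving, so it does not extend in the way you would need.

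In short: the approach you sketch is reasonable and correctly diagnoses the obstruction (the alternating signs in Definition~\ref{def:gk} force genuine cancellation), but at the two load-bearing points --- existence of the non-negative splitting recursion, and construction of the weight-preserving sign-reversing involution or an intrinsically $e$-positive combinatorial model --- you supply a wish rather than an argument. As written this does not prove the conjecture, and the conjecture is not proved in the paper either.
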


We have the following result, proved in Section \ref{sec:comb}, about the $e$-positivity of $g_k(G_\m;x,q)$ (unfortunately, only for $q=1$).

\begin{theorem}
\label{thm:g_ek_positive}
The coefficient of $e_k$ in the $e$-expansion of $g_k(G_\m;x,1)$ is non-negative for every $k$ and every Hessenberg function $\m$.
\end{theorem}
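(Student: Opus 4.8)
The plan is to reduce the statement to an elementary counting inequality about the symmetric group, and then attack that inequality combinatorially.

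\emph{Step 1: extracting the coefficient of $e_k$.} Setting $q=1$ in Definition~\ref{def:gk} and using that $\rho_m|_{q=1}=p_m$ (the defining recursion of $\rho_n$ becomes Newton's identity) and $\omega(p_m)=(-1)^{m-1}p_m$, one gets
\[
g_k(\m;x,1)=\sum_{\substack{\sigma=\tau_1\cdots\tau_{j}\in S_{n,\m}\\ |\tau_1|\geq n-k}}(-1)^{\,k-j+1}\,h_{|\tau_1|-n+k}\;p_{|\tau_2|}\cdots p_{|\tau_j|}.
\]
Expanding each $h_a$ in the power-sum basis and collecting the coefficient of $p_{(k)}$ (the one-part partition of $k$), every summand with $j\geq 3$, and every summand with $j=2$, $|\tau_1|>n-k$, contributes at least two parts and hence nothing; the surviving contributions come from $j=1$ (which forces $\sigma$ to be an $n$-cycle and contributes $(-1)^k/k$, since the coefficient of $p_k$ in $h_k$ is $1/k$) and from $j=2$ with $|\tau_1|=n-k$ (contributing $(-1)^{k-1}$). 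So the coefficient of $p_{(k)}$ in $g_k(\m;x,1)$ equals $(-1)^{k-1}\big(A_k-\tfrac1k N\big)$, where $N:=\#\{\sigma\in S_{n,\m}:\sigma\text{ is an }n\text{-cycle}\}$ and $A_k:=\#\{\sigma\in S_{n,\m}:\sigma\text{ has cycle type }\{n-k,k\}\text{ with }1\text{ in the }(n-k)\text{-cycle}\}$. Since $\{e_\lambda\}$ and the forgotten symmetric functions $\{f_\lambda\}$ are dual bases and $f_{(k)}=\omega(m_{(k)})=(-1)^{k-1}p_k$, for $F$ homogeneous of degree $k$ the coefficient of $e_k$ is $(-1)^{k-1}k$ times the coefficient of $p_{(k)}$; applying this to $g_k(\m;x,1)$ gives $[e_k]\,g_k(\m;x,1)=kA_k-N$ for $k\ge1$ (and $[e_0]\,g_0(\m;x,1)=N\ge0$). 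Thus it suffices to prove $N\le kA_k$.

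\emph{Step 2: an encoding, and the existence of a good excision.} Encode an $n$-cycle $\sigma\in S_{n,\m}$ by the cyclic word $w=(w_0,\dots,w_{n-1})$ with $w_i=\sigma^i(1)$: then $w_0=1$, $w$ is a permutation of $[n]$, and every cyclic step is \emph{valid}, $w_{i+1}\le\m(w_i)$ (the step $w_{n-1}\to1$ being automatic). An element of $A_k$ is the same data as a pair of vertex-disjoint valid cyclic words of lengths $n-k$ (through $1$) and $k$. For $j\in\{1,\dots,n-k\}$ let $\Phi_j(\sigma)$ be obtained by excising the contiguous block $w_j,\dots,w_{j+k-1}$ and closing up both pieces; then $\Phi_j(\sigma)\in A_k$ exactly when $(\mathrm C_j)$ $w_j\le\m(w_{j+k-1})$ and $(\mathrm J_j)$ $w_{j+k}\le\m(w_{j-1})$ (with $w_n:=1$). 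Since $\m$ is non-decreasing, $\mathrm C_j$ holds whenever $w_{j-1}\le w_{j+k-1}$ and $\mathrm J_j$ whenever $w_{j+k-1}\le w_{j-1}$, and $\mathrm J_{n-k}$ is automatic. Now examine the signs of $w_p-w_{p+k}$ for $p=0,\dots,n-k-1$: this is negative at $p=0$. If $w_{n-k-1}<w_{n-1}$, then $\mathrm C_{n-k}$ holds and $j=n-k$ is a good excision. Otherwise the sign is negative at $p=0$ and positive at $p=n-k-1$, so there is a flip, i.e. a $p$ with $w_p<w_{p+k}$ and $w_{p+1}>w_{p+1+k}$; then $j=p+1$ is a good excision, since $\mathrm C_{p+1}$ follows from $w_p<w_{p+k}$ and $w_{p+1+k}<w_{p+1}\le\m(w_p)$ gives $\mathrm J_{p+1}$. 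Either way one gets a canonical good index $j(\sigma)$ with $\Phi_{j(\sigma)}(\sigma)\in A_k$.

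\emph{Step 3: the counting bound, and the main obstacle.} The goal is to show that $\sigma\mapsto\big(\Phi_{j(\sigma)}(\sigma),\,w_{j(\sigma)}\big)$ — recording also the starting vertex of the excised block, a vertex of the small $k$-cycle, so $k$ choices per element of $A_k$ — is injective, which gives $N\le kA_k$. The "generic" part is clean: the operation "splice the small $k$-cycle in immediately before the vertex $1$" is a bijection between $\{(\sigma',v):\sigma'\in A_k,\ v\in\text{small cycle of }\sigma',\ v\le\m(\sigma'^{-1}(1))\}$ and $\{\,n\text{-cycles }\sigma\in S_{n,\m}\text{ with }\mathrm C_{n-k}\,\}$, the condition $\mathrm C_{n-k}$ being precisely what makes the inverse operation land back in $S_{n,\m}$. \textbf{The hard part} is to account for the remaining $n$-cycles — those failing $\mathrm C_{n-k}$, for which $j(\sigma)<n-k$ — and to verify that the resulting pairs $(\sigma',v)$ are all distinct (from each other and from the generic ones); equivalently, one must show the canonical index $j(\sigma)$ of Step 2 is reconstructible from $(\Phi_{j(\sigma)}(\sigma),w_{j(\sigma)})$, i.e. that the reinsertion edge of the excised block is forced. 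I expect this to be the technical crux; note the factor $k$ is sharp, since for $\m\equiv n$ (the complete graph) one has $A_k=(n-1)!/k$ and $N=(n-1)!$, so $N=kA_k$.
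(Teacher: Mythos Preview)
Your overall strategy matches the paper's exactly: both reduce to the inequality $kA_k\ge N$ (in the paper's notation, $|S_2|\ge|S_1|$) and attack it by building an explicit injection from the $n$-cycles of $S_{n,\m}$ into the set of pairs (two-cycle permutation, choice of starting vertex on the $k$-cycle). Your Step~1 is correct; the route through the coefficient of $p_{(k)}$ and the duality $\langle e_\lambda,f_\mu\rangle=\delta_{\lambda\mu}$ with $f_{(k)}=(-1)^{k-1}p_k$ is a pleasant alternative to the paper's direct computation of $[e_k]\big(h_a\,\omega(\rho_{b_2}\cdots\rho_{b_j})\big)$, and lands on the same formula $[e_k]\,g_k(\m;x,1)=kA_k-N$. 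Your Step~2 (existence of a good excision) is also fine.

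The gap is precisely where you say it is, and it is not a formality. First, your rule is not fully specified: when $w_{n-k-1}\ge w_{n-1}$ you appeal to ``a flip'' without saying which one, so $j(\sigma)$ is not yet a well-defined function. Second, and more seriously, even after fixing a choice, the reconstruction of the reinsertion point from $(\Phi_{j(\sigma)}(\sigma),w_{j(\sigma)})$ is exactly the content of the argument, and your sign-flip criterion does not make it visible. The paper's injection is close to yours but chosen to make injectivity provable: in your indexing, it takes $j(\sigma)$ to be the \emph{largest} $j'$ with $\mathrm C_{j'}$ (the failure of $\mathrm C_{j'+1}$ then forces $\mathrm J_{j'}$ automatically), and it records the small cycle after a further rotation so that its closure step is always the inherited edge at position $n-k$. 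Injectivity is then proved by contradiction: given two preimages with shifts $j<j'$, write $j'-j=qk+r$ and chain the relation $a\gg b\ :\!\iff a>\m(b)$ along the big sequence in steps of size $k-1$; after $q$ iterations one contradicts a valid step of one of the preimages. This chaining argument is the substantive combinatorics you are missing, and it depends on the ``rightmost valid window'' choice rather than on a sign-flip rule.
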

As a direct corollary, we get the following $e$-positivity result for the chromatic symmetric function.
\begin{corollary}
The coefficient of $e_{a,b}$ in the $e$-expansion of $\csf(\m;x)$ is non-negative  for every Hessenberg function $\m$.
\end{corollary}

Finally, in the appendix, we will explain how to define functions $g_k(G;x)$ for every graph $G$ (together with a choice of a vertex $v$).

\section{Computing the Frobenius character $\ch(L_k)$.}
\label{sec:geom}

Let $G=GL_n(\mathbb{C})$ and identify $\mathbb{P}^{n-1}$ with $\Gr(1,\mathbb{C}^n)$. Consider the subvarieties $\mathcal{H}_k\subset G^{rs}\times \mathbb{P}^{n-1}$ and $\mathcal{H}_k^\circ$ defined in Equation \eqref{eq:Hi}, for $k=0,\ldots, n-1$.  For each pair $(X,V_1)\in \mathcal{H}_k^\circ$ there exists exactly one subspace $V'_{n-k}$ such that $V_1\subset V'_{n-k}$ and $XV'_{n-k}=V'_{n-k}$, namely $V'_{n-k}=\oplus_{j=0}^{n-k-1}X^jV_1$.

The varieties $\mathcal{H}_k$ are the parabolic Lusztig varieties in $G^{rs}\times \mathbb{P}^{n-1}$, see \cite[Example 3.17]{AN_parabolic}. Also, by  \cite[Sections 4 and 5]{AN_parabolic}  these parabolic Lusztig varieties are the ones that appear when we apply the decompostion theorem to the map $\h_\m^{rs}\to G^{rs}\times\mathbb{P}^{n-1}$. Moreover, \cite[Sections 4 and 5]{AN_parabolic} also give a characterization of the local systems that can appear.\par 

 We recall the characterization of the local systems. First, we recall how to construct the  natural map $\pi_1(\mathcal{H}_k^\circ, (X,V_1))\to S_k$ (\cite[Proposition 7.4]{AN_parabolic}). For a vector space $V$ we denote by $\flag(V)$ the flag variety of $V$. If $X$ is an invertible matrix and $V\subset \mathbb{C}^n$ is an invariant subspace, then $X$ induces a map $X\colon \mathbb{C}^n/V\to \mathbb{C}^n/V$, which we will abuse notation and call it $X$ as well.
By \cite[Proposition 7.4]{AN_parabolic}, the following map 
\[
\{(X,V_1,F_\bullet); (X,V_1)\in \mathcal{H}_k^\circ, F_\bullet \in \flag(\mathbb{C}^n/V'_{n-k}), XF_\bullet=F_\bullet\}\to \mathcal{H}_k^\circ
\]
is a $S_k$-Galois cover, which induces the map $\pi_1(\mathcal{H}_k^\circ, (X,V_1))\to S_k$. Each local systems $\mathcal{L}_k$ appearing in Equation \eqref{eq:RF_decomp} is a local system on $\mathcal{H}_k^\circ$ induced by a representation of $S_k$.

One of the main objects of this section will be the Hessenberg varieties over $\mathcal{H}_k^\circ$, that we now define. For each Hessenberg function $\m_k\col[k]\to[k]$ we define
\[
\h_{\m_k, \mathcal{H}_k^{\circ}}^{rs} = \{(X, V_1, F_{\bullet}); (X,V_1)\in \mathcal{H}_k^\circ, F_\bullet \in \flag(\mathbb{C}^n/V'_{n-k}), XF_j\subset F_{\m_k(j)}\},
\]
where $V'_{n-k}=\bigoplus_{j=0}^{n-k-1}X^jV_1$. Also note that since $XV'_{n-k}=V'_{n-k}$ we have that $X$ naturally induces a map $X\colon \mathbb{C}^n/V'_{n-k}\to \mathbb{C}^n/V'_{n-k}$.
\begin{proposition}
\label{prop:csf_H}
   Let $f_{\m_k}\col \h_{\m_k,\mathcal{H}_k^\circ}\to\mathcal{H}_k^\circ$ be the forgetful map, then $R(f_{\m_k})_*(\mathbb{C}_{\h_{\m_k,\mathcal{H}_k^\circ}})$ is a graded local system on $\mathcal{H}_k^\circ$ induced by a graded $S_k$-module whose character is equal to $\omega(\csf_q(G_{\m_k}))$.
\end{proposition}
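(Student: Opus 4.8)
The plan is to reduce this statement to the Shareshian–Wachs conjecture (in the form $\omega(\csf_q(G_{\m_k})) = \ch(Rg_*(\mathbb{C}_{\h_{\m_k}^{rs}}))$ already recalled in the introduction) by exhibiting a local fibration structure on $f_{\m_k}\col \h_{\m_k,\mathcal{H}_k^\circ}^{rs}\to\mathcal{H}_k^\circ$ whose fiber is the relative regular semisimple Hessenberg variety for $\m_k$ inside $GL_k(\mathbb{C})\times\flag(\mathbb{C}^k)$. First I would unwind the definitions: over a point $(X,V_1)\in\mathcal{H}_k^\circ$, the subspace $V'_{n-k}=\bigoplus_{j=0}^{n-k-1}X^jV_1$ is canonically determined, $X$ is invariant on it, and the quotient map $\bar X\colon\mathbb{C}^n/V'_{n-k}\to\mathbb{C}^n/V'_{n-k}$ is regular semisimple (its eigenvalues being the $k$ eigenvalues of $X$ not used up by the cyclic block $V'_{n-k}$; one has to check genuineness of this — see below). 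The fiber $f_{\m_k}^{-1}(X,V_1)$ is then exactly $\{F_\bullet\in\flag(\mathbb{C}^n/V'_{n-k}); \bar X F_j\subset F_{\m_k(j)}\}$, i.e. the regular semisimple Hessenberg variety $\h_{\m_k}(\bar X)$ for a $k$-dimensional space.

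Next I would make this global. The data $(X,V_1)\mapsto (\mathbb{C}^n/V'_{n-k},\bar X)$ defines a morphism from $\mathcal{H}_k^\circ$ to (a bundle version of) $GL_k(\mathbb{C})^{rs}$: concretely, pull back along the $S_k$-Galois cover $Y_k := \{(X,V_1,F_\bullet^{0}); (X,V_1)\in\mathcal{H}_k^\circ,\ F_\bullet^{0}\in\flag(\mathbb{C}^n/V'_{n-k}),\ \bar X F_\bullet^{0}=F_\bullet^{0}\}\to\mathcal{H}_k^\circ$ of \cite[Proposition 7.4]{AN_parabolic}; a choice of eigenbasis trivializes $\mathbb{C}^n/V'_{n-k}\cong\mathbb{C}^k$ and identifies the pulled-back family $\h_{\m_k,\mathcal{H}_k^\circ}^{rs}\times_{\mathcal{H}_k^\circ}Y_k$ with $\h_{\m_k}^{rs}\times_{GL_k(\mathbb{C})^{rs}}Y_k'$ for a suitable map $Y_k'\to GL_k(\mathbb{C})^{rs}$, compatibly with the $S_k$-actions (the $S_k$ on $Y_k$ permutes the eigenlines, matching the monodromy $S_k$ of $GL_k(\mathbb{C})^{rs}$). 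By smooth (or proper) base change, $R(f_{\m_k})_*(\mathbb{C})$ pulled back to $Y_k$ is $Rg_*(\mathbb{C}_{\h_{\m_k}^{rs}})$ pulled back along $Y_k'\to GL_k(\mathbb{C})^{rs}$, which by \cite[Corollary 119]{BrosnanChow} is $\bigoplus_{\lambda\vdash k}\mathcal{L}_\lambda\otimes W_\lambda(\m_k)$; descending through the $S_k$-cover shows $R(f_{\m_k})_*(\mathbb{C})$ is itself a graded local system on $\mathcal{H}_k^\circ$ induced by the graded $S_k$-module $\bigoplus_\lambda L_\lambda\otimes W_\lambda(\m_k)$. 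Its Frobenius character is $\ch(Rg_*(\mathbb{C}_{\h_{\m_k}^{rs}})) = \omega(\csf_q(G_{\m_k}))$ by Brosnan–Chow/Guay-Paquet.

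The main obstacle, I expect, is the bookkeeping that makes the above honest: (i) verifying that $\bar X$ on $\mathbb{C}^n/V'_{n-k}$ is regular semisimple for $(X,V_1)$ in the \emph{open} stratum $\mathcal{H}_k^\circ$ — this is exactly where the condition $(X,V_1)\notin\mathcal{H}_j$ for $j>k$ is used, since belonging to a larger stratum would mean $V_1$ lies in a smaller $X$-invariant subspace, forcing a repeated eigenvalue or a drop in the cyclic dimension; (ii) checking that the identification of the pulled-back family with $\h_{\m_k}^{rs}$ is $S_k$-equivariant on the nose, so that the monodromy local system $\mathcal{L}_\lambda$ on $\mathcal{H}_k^\circ$ coming from $\pi_1(\mathcal{H}_k^\circ,(X,V_1))\to S_k$ is matched with the one on $GL_k(\mathbb{C})^{rs}$; and (iii) confirming that $f_{\m_k}$ is proper (it is, being a closed subvariety of the $\flag(\mathbb{C}^n/V'_{n-k})$-bundle over $\mathcal{H}_k^\circ$) so that the decomposition theorem / base change apply and the pushforward is a genuine local system (no higher perverse pieces appear because the map $g$ for $\m_k$ is smooth, \cite[Section 8.2]{BrosnanChow}, hence so is $f_{\m_k}$). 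Once these compatibilities are in place the character computation is immediate, so the content is really the fibration statement plus the stratum analysis.
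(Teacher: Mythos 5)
Your proposal is correct and takes essentially the same approach as the paper, which simply observes that the fiber $f_{\m_k}^{-1}(X,V_1)$ is the Hessenberg variety $\h_{\m_k}(\bar X)$ and then invokes Brosnan--Chow; your write-up supplies the bookkeeping (the $S_k$-Galois cover, base change, and regular semisimplicity of $\bar X$ on the quotient) that the paper leaves implicit. One small correction to your point (i): since $X$ is already regular semisimple on all of $\mathbb{C}^n$, there is no danger of repeated eigenvalues on the quotient; the openness condition $(X,V_1)\in\mathcal{H}_k^\circ$ is used only to guarantee that the cyclic subspace $V'_{n-k}$ has dimension exactly $n-k$, so that the quotient has the right dimension $k$.
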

\begin{proof}
  It is clear that the fiber $f_{\m_k}^{-1}(X,V_1)$ is isomorphic to the Hessenberg variety $\h_{\m_k}(X)$.   The result follows from Brosnan--Chow proof of the Shareshian--Wachs conjecture (\cite{BrosnanChow})
\end{proof}

Let $\m \col [n]\to[n]$ be a Hessenberg function. Consider the map
\begin{align*}
f\col \h_{\m}^{rs}&\to G^{rs}\times \mathbb{P}^{n-1}\\
 (X,V_\bullet)&\mapsto (X,V_1).
\end{align*}
By Equation \eqref{eq:RF_decomp}, we have that $Rf_*(\C_{\h_\m^{rs}})=\bigoplus_{k=0}^{n-1}IC_{\mathcal{H}_k}(\mathcal{L}_{k})$ where each $\mathcal{L}_k$ is a graded local system on $\mathcal{H}_k^\circ$ induced by a graded $S_k$-module $L_k$. The main goal of this section is to give a combinatorial description of $\ch(L_k)$.

We begin by stratifying the map $f$. The idea is to intersect $\h_{\m}^{rs}$ with (union of) Schubert cells as in \cite{Tym07}. Each cell can be described as follows. For each $k=0,1,\ldots, n-1$, set $J_k:=\{n-k+1,\ldots, n-1\}$ and define the set of minimum right coset representatives 
\[
(S_n)^{J_k}:=\{\sigma\in S_n; \sigma(i)<\sigma(i+1)\text{ for every $i\in J_k$}\}.
\]
For a permutation $\sigma\in S_n$, we set
\[
r_{ij}(\sigma) :=|\{k;k\leq i,\sigma(k)\leq j\}|.
\]
 
 Let $(X,V_1)$ be a pair in $\mathcal{H}_k^\circ$, we set $V_j':=\bigoplus_{i=0}^{j-1}X^iV_1$ for $j=1,\ldots, n-k$ (note that $XV_{n-k}'=V_{n-k}'$). For each $k=0,\ldots, n-1$ and each permutation $\sigma \in (S_n)^{J_k}$, we define the cell 
\[
(\h_{\m,k,\sigma}^{\circ})^{rs} := \left\{(X, V_\bullet)\in (\h_{\m})^{rs};\begin{array}{l} (X,V_1)\in \mathcal{H}_k^\circ, \dim V_i'\cap V_j = r_{ij}(\sigma)\\\text{ for }i=1,\ldots, n-k\text{ and }j=1,\ldots, n\end{array}\right\}.
\]

\begin{remark}
\label{rem:union_schubert}
Recall the definition of the Schubert cell. For a permutation $\sigma\in S_n$ and a full flag $F_\bullet$ the Schubert cell is defined as 
\[
\Omega_{\sigma}^\circ(F_\bullet) := \{V_\bullet; \dim F_i\cap V_j = r_{ij}(\sigma)\text{ for }i=1,\ldots, n\text{ and }j=1,\ldots, n\}.
\]
 For a partial flag variety $V'_\bullet = V_0'\subset V_1'\subset \ldots V'_{n-k}$ and a permutation $\sigma\in (S_n)^{J_k}$, we define the cell
 \[
 \Omega_{\sigma,n-k}^\circ(V'_\bullet)=\{V_\bullet; \dim V_i'\cap V_j := r_{ij}(\sigma)\text{ for }i=1,\ldots, n-k\text{ and }j=1,\ldots, n\},
 \]
 which is the union of Schubert cells
 \[
 \Omega_{\sigma,n-k}^\circ(V'_\bullet) = \bigsqcup_{\sigma'\in (S_1^{n-k}\times S_k)\sigma }\Omega_{\sigma'}^\circ(\overline{V}'_\bullet)
 \]
 where $\overline{V}'_\bullet$ is any completion of $V'_\bullet$ to a full flag. So the cell $(\h_{\m,k,\sigma}^\circ)^{rs}$ is the intersection of $(\h_\m^\circ)^{rs}$ with the cell $\Omega_\sigma (V_\bullet')$.\par 
 
 Moreover, for a full flag $F_\bullet=(F_0\subset F_1\subset \ldots \subset F_{n})$, we define
 \[
 \Omega_{\sigma, n-k}^\circ(F_\bullet):=\Omega_{\sigma, n-k}^\circ(F_0\subset F_1\subset \ldots \subset F_{n-k}).
 \]
\end{remark}

Each permutation $\sigma \in (S_n)^{J_k}$ is determined by its restriction $\sigma|_{[n-k]}\col [n-k]\to [n]$. Indeed, we have that $\sigma(i+1)>\sigma(i)$ for every $i=n-k+1,\ldots, n-1$. It is sometimes more intuitive to consider only the function $\tau\col [n-k]\to [n]$ associated to $\sigma$, in this case we write $\h_{\m,k,\tau}$ instead of $\h_{\m,k,\sigma}$. Also note that
\[
r_{ij}(\sigma)=|\{k;k\leq i, \tau(k)\leq j\}|=:r_{ij}(\tau)
\]
whenever $i\leq n-k$.

We have a characterization of the injective functions $\tau\col [n-k]\to [n]$ satisfying $\tau(1)=1$ and $\tau(i+1)<\m(\tau(i))$ in terms of increasing subtrees of the graph $G_\m$. We denote by $\IncTree_{n-k}(G_\m)$ the set of increasing trees with $n-k$ vertices contained in $G_\m$ and with root at vertex $1$. 

\begin{lemma}
\label{lembijection_sigma_T}
Let $k$ be a nonnegative integer, then there is a bijection 
\[
f\col \left\{
\sigma\in (S_n)^{J_k};\begin{array}{c}
     \sigma(i+1)\leq \m(\sigma(i))\text{ for every } i=1,\ldots, n-k-1\\
     \sigma(1)=1
\end{array}
\right\}
\leftrightarrow
\IncTree_{n-k}(G_\m).
\]
\end{lemma}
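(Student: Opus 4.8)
The plan is to strip off the values $\sigma(n-k+1),\ldots,\sigma(n)$ and work only with the induced injection $\tau=\sigma|_{[n-k]}\col[n-k]\to[n]$, which (as already observed) determines $\sigma$ uniquely, and for which the two conditions defining the left-hand set become simply $\tau(1)=1$ and $\tau(i+1)\le\m(\tau(i))$ for all $i<n-k$. So it suffices to set up a bijection between such sequences $(\tau(1),\ldots,\tau(n-k))$ and $\IncTree_{n-k}(G_\m)$.

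First I would define the forward map. Given such a $\tau$, with underlying set $W=\tau([n-k])$, build a graph $T_\tau$ on $W$ by joining, for each $i\ge 2$, the vertex $\tau(i)$ to $\tau(j_i)$, where $j_i<i$ is the largest index with $\tau(j_i)<\tau(i)$ (which exists because $\tau(1)=1<\tau(i)$). The key point is that $\{\tau(j_i),\tau(i)\}$ is an edge of $G_\m$: by maximality of $j_i$ one has $\tau(\ell)>\tau(i)$ for $j_i<\ell<i$, hence $\tau(i)\le\tau(j_i+1)\le\m(\tau(j_i))$ by the Hessenberg condition, and together with $\tau(j_i)<\tau(i)$ this is exactly the defining inequality of an edge of $G_\m$. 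Since every element of $W$ other than $1=\min W$ has exactly one neighbour of strictly smaller label, iterating this ``parent'' relation strictly decreases labels and so terminates at $1$; thus $T_\tau$ is connected with $|W|-1$ edges, i.e.\ an increasing tree rooted at $1$ contained in $G_\m$, so $T_\tau\in\IncTree_{n-k}(G_\m)$.

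Next I would describe the candidate inverse. Given $T\in\IncTree_{n-k}(G_\m)$, let $\tau_T$ list the vertices of $T$ in the order of the depth-first traversal started at $1$ in which the children of every vertex are visited in \emph{decreasing} order of their labels. Then $\tau_T(1)=1$ and $\tau_T$ is an injection onto $V(T)$, and it lies in the left-hand set because the Hessenberg condition holds: if $\tau_T(i+1)$ is the (largest) child of $\tau_T(i)$, the tree edge $\{\tau_T(i),\tau_T(i+1)\}\in E(G_\m)$ gives $\tau_T(i+1)\le\m(\tau_T(i))$; otherwise $\tau_T(i)$ is a leaf and $\tau_T(i+1)$ is reached by backtracking to a proper ancestor $a$ of $\tau_T(i)$ and descending to its next, necessarily smaller, child, so $\tau_T(i+1)<b\le\tau_T(i)\le\m(\tau_T(i))$, where $b$ is the child of $a$ on the path to $\tau_T(i)$ and the last inequality uses only $\m(x)\ge x$.

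The hard part will be verifying that these two maps are mutually inverse, and I would do it through the interval description of depth-first subtrees. Concretely, I expect to prove by induction on interval length that, in $T_\tau$, the set of descendants (including itself) of $\tau(i)$ is exactly $\{\tau(i),\tau(i+1),\ldots,\tau(r_i)\}$, where $r_i+1$ is the first index $j>i$ with $\tau(j)<\tau(i)$ (and $r_i=n-k$ if there is none): $\tau(i+1)$ is always a child of $\tau(i)$, its subtree is the next sub-block by induction, the following child of $\tau(i)$ starts right after that block, and the children of $\tau(i)$ emerge in decreasing order --- exactly matching the decreasing-children depth-first rule. Taking $i=1$ yields $\tau_{T_\tau}=\tau$, and the same interval picture run in reverse shows that the ``last smaller predecessor'' of $\tau(i)$ in the traversal $\tau_T$ of a given $T$ is its parent in $T$, so $T_{\tau_T}=T$. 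The bookkeeping of these nested intervals is really the only obstacle; each individual claim is a one-line check once the interval picture is in place.
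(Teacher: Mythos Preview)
Your proposal is correct and follows essentially the same route as the paper: both use Stanley's bijection, attaching $\tau(i)$ to $\tau(j_i)$ where $j_i$ is the largest earlier index with $\tau(j_i)<\tau(i)$, and both verify the edge lies in $G_\m$ via $\tau(i)\le\tau(j_i+1)\le\m(\tau(j_i))$. The paper simply cites Stanley's bijection as known and then checks the Hessenberg condition in each direction with the same $\sigma(i)\lessgtr\sigma(i+1)$ case split you give; your explicit description of the inverse as the decreasing-children depth-first traversal and the nested-interval argument for mutual inversion just spell out what the paper takes for granted.
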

\begin{proof}
We use Stanley's bijection. We let $T$ be the tree whose vertices are $\sigma(1), \sigma(2),\ldots, \sigma(n-k)$ and there is an edge connecting vertices $\sigma(j)$ and $\sigma(i)$, with $\sigma(j)>\sigma(i)$ if $i$ is the largest index smaller than $j$ such that $\sigma(i)< \sigma(j)$ (such an $i$ always exists because $\sigma(1)=1<\sigma(j)$ for every $j>1$). In particular the tree $T$ is increasing as for every vertex $\sigma(j)$ there exists only one edge connecting $\sigma(j)$ with a vertex $\sigma(i)$ smaller than $\sigma(j)$.

To prove that $T$ is a subtree of $G_\m$, it is enough to check that every $i$ as above satisfies $\m(\sigma(i))\geq \sigma(j)$. However, we know that $\sigma(i+1)\leq \m(\sigma(i))$ and by construction either $\sigma(i+1)=\sigma(j)$ or $\sigma(i+1)>\sigma(j)$, in either case we have that $\sigma(j)<\m(\sigma(i))$.

Every increasing tree $T$ on $n-k$ vertices comes from a permutation $\sigma\in (S_n)^{J_k}$, we now prove that if $T$ is a subtree of $G_\m$, then $\sigma(i+1)<\m(\sigma(i))$ for every $i=1,\ldots, n-k-1$. If $\sigma(i)>\sigma(i+1)$ there is nothing to do. Otherwise, if $\sigma(i)<\sigma(i+1)$ then there is an edge connecting $\sigma(i)$ and $\sigma(i+1)$ which must be an edge of $G_\m$, which implies that $\sigma(i+1)<\m(\sigma(i))$.
\end{proof}

We will abuse notation and write $\IncTree_{n-k}(G_\m)$ to mean:
\begin{itemize}
\item the set of increasing subtrees of $G_\m$ with $n-k$ vertices;
\item the set of injective functions $\tau\col[n-k]\to[n]$ such that $\tau(1)=1$ and $\tau(i+1)\leq \m(\tau(i))$ for every $i=1,\ldots, n-k-1$;
\item the set of permutations $\sigma\in (S_n)^{J_k}$ such that $\sigma(1)=1$ and $\sigma(i+1)\leq \m(\sigma(i))$ for every $i=1,\ldots, n-k-1$.
\end{itemize}
Moreover, we write $\wt_\m(\sigma)$ to denote the number of $\m$-inversions of $\sigma$, that is pairs $i<j$ such that $\sigma(j)< \sigma(i)\leq \m(\sigma(j))$. We will also write $\wt_\m(\tau)$ and $\wt_\m(T)$ to mean the number of $\m$-inversions of the permutation $\sigma$ associated to $\tau$ or $T$.

If $\tau(1)=1$ (which will always be the case going forward), we let $j_1<j_2<\ldots < j_k$ be integers such that 
\begin{equation}
\label{eq:j_image_tau}
    \{j_1,\ldots, j_k\}=[n]\setminus \Ima(\tau),
\end{equation} and define $\m\setminus \tau$ as the Hessenberg function $\m\setminus \tau\col [k]\to [k]$ defined as
\[
(\m\setminus \tau) (i)= \max\{i_0; j_{i_0}\leq \m(j_i)\}.
\]
In particular the graph $G_{\m\setminus \tau}$ is obtained from the graph $G_{\m}$ by removing the vertices in $\Ima(\tau)$. We have the following characterization of the cell $(\h_{\m,k,\tau}^\circ)^{rs}$.

\begin{proposition}
\label{prop:properties_hmktau}
   The following properties hold.
   \begin{enumerate}
       \item The cell $(\h_{\m,k,\tau}^{\circ})^{rs}$ is nonempty only if $\tau(i+1)\leq \m(\tau(i))$ for every $i=1,\ldots, n-k-1$ and $\tau(1)=1$.
       \item We have a stratification 
       \[
       \h_{\m}^{rs}=\bigsqcup_{\substack{k=0,\ldots, n-1\\ \tau\col[n-k]\hookrightarrow[n]}} (\h_{\m,k,\tau}^\circ)^{rs}.
       \]
       \item If $\tau$ satisfies that $\tau(1)=1$ and $\tau(i+1)<\m(\tau(i))$ for every $i=1,\ldots, n-k-1$, then the map $(\h_{\m,k,\tau}^\circ)^{rs}\to \mathcal{H}_k^\circ$ factors through a map
       \[
       (\h_{\m,k,\tau}^\circ)^{rs}\to \h_{\m\setminus\tau,\mathcal{H}_k^{\circ}}^{rs}
       \]
       whose fibers are isomorphic to the affine space $\mathbb{A}^{\wt_\m(\tau)}$.
   \end{enumerate}
\end{proposition}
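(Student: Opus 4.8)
The plan is to analyze the cell $(\h_{\m,k,\tau}^{\circ})^{rs}$ directly in terms of linear algebra, using the description of a flag $V_\bullet$ by its intersection numbers with the fixed partial flag $V'_\bullet=(V'_1\subset\cdots\subset V'_{n-k})$, where $V'_j=\bigoplus_{i=0}^{j-1}X^jV_1$.

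For part (1), the key observation is that membership in $(\h_{\m,k,\tau}^{\circ})^{rs}$ forces, for each $i\le n-k$, the line $V_1$ to sit inside $V'_i$ in a way compatible with the Hessenberg condition $XV_i\subset V_{\m(i)}$. Concretely, $\dim V'_i\cap V_j = r_{ij}(\tau)$ records that the "jump" of $V_\bullet$ relative to $V'_\bullet$ at step $i$ occurs in position $\tau(i)$; I would translate the Hessenberg condition $XV_i\subset V_{\m(i)}$ at these jump positions into the inequality $\tau(i+1)\le\m(\tau(i))$, exactly as in the proof of Lemma \ref{lembijection_sigma_T}. The condition $\tau(1)=1$ comes from $V_1\subset V'_1$ automatically (since $V'_1=V_1$ by definition when $V_1$ is the image under $f$), so $\dim V'_1\cap V_j = 1$ for all $j\ge 1$, forcing $\tau(1)=1$. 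Part (2) is then essentially formal: every flag $V_\bullet\in\h_\m^{rs}$ determines a unique pair $(k,\tau)$ by letting $k$ be determined by which $\mathcal{H}_k^\circ$ the point $(X,V_1)$ lies in (the stratification $G^{rs}\times\mathbb{P}^{n-1}=\bigsqcup\mathcal{H}_k^\circ$ is given, each point lying in exactly one $\mathcal{H}_k^\circ$), and then reading off the intersection numbers $r_{ij}$, which always form an $r_{ij}(\tau)$ for a unique injection $\tau\colon[n-k]\hookrightarrow[n]$ because $V'_\bullet$ is a genuine partial flag. Disjointness and exhaustiveness are immediate from uniqueness.

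The substantive part is (3). Given $\tau$ with strict inequalities $\tau(i+1)<\m(\tau(i))$, I want to send $(X,V_\bullet)\in(\h_{\m,k,\tau}^\circ)^{rs}$ to the data $(X,V_1,F_\bullet)$ where $F_\bullet$ is the image in $\mathbb{C}^n/V'_{n-k}$ of the subflag of $V_\bullet$ consisting of those $V_i$ that, together with $V'_{n-k}$, span a space strictly larger than $V'_{n-k}+V_{i-1}$ — i.e., the "new directions" of $V_\bullet$ not already seen in $V'_{n-k}$. One checks $\dim(V'_{n-k}+V_i)/V'_{n-k}$ increases by one exactly at the $k$ indices $i$ with $\tau$-value in $[n]\setminus\Ima(\tau)=\{j_1<\cdots<j_k\}$, giving a full flag $F_\bullet$ in the $k$-dimensional quotient, and the Hessenberg condition $XV_i\subset V_{\m(i)}$ descends to $XF_j\subset F_{(\m\setminus\tau)(j)}$ by the definition of $\m\setminus\tau$; this is where I'd verify that $(\m\setminus\tau)(i)=\max\{i_0; j_{i_0}\le\m(j_i)\}$ is precisely the correct Hessenberg function on the quotient. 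For the fiber, fix the target data $(X,V_1,F_\bullet)\in\h_{\m\setminus\tau,\mathcal{H}_k^\circ}^{rs}$. Reconstructing $V_\bullet$ amounts to choosing, at each of the $n-k$ "old" steps (where the jump lies in $\Ima(\tau)$), a splitting compatible with the already-determined pieces: inductively, $V_i$ is obtained from $V_{i-1}$ by adjoining either a prescribed line in the quotient (the $F$-part, no freedom) or a vector in $V'_{\text{something}}$ that is only constrained modulo lower terms — the ambiguity at step $i$ being an affine space of dimension equal to the number of earlier indices whose $\tau$-value $j$ satisfies $\tau(i)<j\le\m(\tau(i))$, i.e. exactly the $\m$-inversions of $\tau$ "created at $i$". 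Summing over $i$ gives $\mathbb{A}^{\wt_\m(\tau)}$; the strictness $\tau(i+1)<\m(\tau(i))$ is what guarantees the relevant vector can be chosen in the right invariant subspace so that the Hessenberg condition is preserved, rather than becoming an obstruction.

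The main obstacle I anticipate is (3): carefully setting up the affine-bundle structure, in particular pinning down at each step which coordinate of $V_i$ is free and which is determined, and checking that the freedom is genuinely unobstructed affine (no quadratic or closed conditions appear) and that it matches $\wt_\m(\tau)$ on the nose. This is the standard but delicate Tymoczko-style cell-by-cell analysis of Hessenberg varieties (cf. \cite{Tym07}), and the bookkeeping of indices — translating between $V_\bullet$, the partial flag $V'_\bullet$, the injection $\tau$, and the quotient flag $F_\bullet$ — is where the real work lies. Parts (1) and (2), by contrast, are direct consequences of the definitions together with Lemma \ref{lembijection_sigma_T}.
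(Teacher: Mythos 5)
Your outline is on the right track for parts (2) and the $\tau(1)=1$ half of (1), but there are genuine gaps in the remaining pieces.

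For the inequality $\tau(i+1)\leq\m(\tau(i))$ in part (1), you say you would ``translate the Hessenberg condition into the inequality, exactly as in the proof of Lemma \ref{lembijection_sigma_T}.'' This doesn't work: that lemma is a purely combinatorial bijection between permutations and increasing trees, and its proof contains no geometry. What the paper actually does is a dimension-counting contradiction: assuming $\tau(i+1)>\m(\tau(i))$, one computes $r_{i+1,\tau(i)}=r_{i,\tau(i)}$ and $r_{i+1,\tau(i+1)-1}=r_{i,\tau(i+1)-1}$, observes that the Hessenberg condition forces $X\bigl(V'_i\cap V_{\tau(i)}\bigr)\subset V'_{i+1}\cap V_{\tau(i+1)-1}$, and then shows this is incompatible with the jump $\dim V'_i\cap V_{\tau(i)}=\dim V'_{i-1}\cap V_{\tau(i)}+1$ because it would force $V'_{i+1}\cap V_{\tau(i+1)-1}\neq V'_i\cap V_{\tau(i+1)-1}$. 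You need to supply this argument; the translation is not ``direct from the definitions.''

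For part (3), you correctly identify the target map (projecting the ``new directions'' of $V_\bullet$ into $\mathbb{C}^n/V'_{n-k}$) and correctly locate the difficulty — verifying that the fiber is genuinely affine of the claimed dimension. But your inductive ``adjoin a vector modulo lower terms'' sketch elides the point that actually drives the proof. The paper parametrizes the Schubert cell by a unipotent group element $u\in U^\sigma\cong\mathbb{A}^{\ell(\sigma)}$, shows via an explicit block computation (Lemma \ref{lem:X_block}) that each Hessenberg equation $(u^{-1}X'u)_{i,j}=0$ (for $i<j$ with $\sigma(i)>\m(\sigma(j))$) has the form $u_{i-1,j}+P(\text{coordinates with first index}\geq i)=0$, and then needs a dedicated combinatorial lemma (Lemma \ref{lemsigma_inversion}: if $(i,j)$ is an inversion with $\sigma(i)>\m(\sigma(j))$ then $(i-1,j)$ is also an inversion) to guarantee that $u_{i-1,j}$ is an actual free coordinate of $U^\sigma$ that can be eliminated. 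Without some version of this free-coordinate fact, your claim that ``the freedom is genuinely unobstructed affine'' has no justification — a priori the equation for $u_{i-1,j}$ could hit a coordinate that was already pinned to zero, and the $\m$-inversion count would not come out right. So the structure of a correct proof is what you sketched, but the load-bearing lemma is missing and needs to be found and proven, not just asserted as bookkeeping.

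Your treatment of part (2) is fine and agrees with the paper: it follows from the Schubert decomposition restricted to each $\mathcal{H}_k^\circ$ (Remark \ref{rem:union_schubert}).
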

\begin{proof}
  Let us prove item (1). First, note that since $V_1=V_1'$ we must have that $\dim V_1'\cap V_1=1$ and hence $(\h_{\m,k,\tau}^{\circ})^{rs}$ is empty if $\tau(1)\neq 1$. 
   Assume by contradiction that there exist an integer $i\in \{1,\ldots, n-k-1\}$ such that $\tau(i+1)>\m(\tau(i))$ (in particular $\tau(i+1)>\tau(i)$), and a pair $(X,V_\bullet) \in (Y_{\m,k,\tau}^{\circ})^{rs}$. Note that
   \[
   r_{i+1,j}(\tau)=\begin{cases}
   r_{ij}(\tau)& \text{ if }\tau(i+1)>j\\
   r_{ij}(\tau)+1& \text{ if }\tau(i+1)\leq j,
   \end{cases}
   \]
   so we have the following equalities
   \begin{align*}
       r_{i-1,\tau(i)}(\tau)+1&=r_{i,\tau(i)}(\tau)=r_{i+1,\tau(i)}(\tau),\\
       r_{i,\tau(i+1)-1}(\tau)&=r_{i+1,\tau(i+1)-1}(\tau),
   \end{align*}
   which implies
   \begin{align*}
   \dim V'_{i-1}\cap V_{\tau(i)}  +1 &= \dim  V'_i\cap V_{\tau(i)} = \dim  V'_{i+1}\cap V_{\tau(i)},\\
   \dim  V'_{i}\cap V_{\tau(i+1)-1} &=  \dim  V'_{i+1}\cap V_{\tau(i+1)-1} .
   \end{align*}
   However, we have that $XV_{\tau(i)}\subset V_{\m(\tau(i))}\subset V_{\tau(i+1)-1}$ and $XV'_i\subset V'_{i+1}$. Hence $X( V'_i\cap V_{\tau(i)})\subset   V'_{i+1}\cap V_{\tau(i+1)-1} $. We now claim that $X( V'_i \cap V_{\tau(i)}) \not\subset V'_i$. Indeed, if that were that case, we would have that $ V_i'\cap V_{\tau(i)}\subset  V'_i \cap X^{-1}V'_i = V'_{i-1}$ and hence $\dim V'_i\cap V_{\tau(i)}= \dim V'_{i-1}\cap V_{\tau(i)}$, a contradiction. This means that $X( V'_i\cap V_{\tau(i)})\not\subset V'_i$, and since  $X( V'_i\cap V_{\tau(i)})\subset   V'_{i+1}\cap V_{\tau(i+1)-1} $, we have that $V_{\tau(i+1)-1}\cap V'_{i+1} \neq V_{\tau(i+1)-1}\cap V'_i$, a contradiction. This finishes the proof of item (1).\par
   Item $(2)$ follows directly from the Schubert decomposition of the flag variety and Remark \ref{rem:union_schubert}. \par
   The proof of item (3) is a little more involved, so we will postpone it for now.
\end{proof}

Proposition \ref{prop:properties_hmktau} has the following corollary.
\begin{corollary}
\label{cor:ch_f*}
 We have that $Rf_*(\mathbb{C}_{\h_{\m}^{rs}})|_{\mathcal{H}_k^\circ}$ is a local system on $\mathcal{H}_k^\circ$ induced by a graded $S_i$-module whose character is 
 \[
 \sum_{\tau\in \IncTree_{n-k}(G_\m)}q^{\wt_\m(\tau)}\omega(\csf_q(\m\setminus\tau)).
 \]
\end{corollary}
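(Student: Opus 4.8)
**Proof proposal for Corollary \ref{cor:ch_f*}.**

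The plan is to read off the Frobenius character of $Rf_*(\mathbb{C}_{\h_\m^{rs}})|_{\mathcal{H}_k^\circ}$ from the stratification of $f$ provided by Proposition \ref{prop:properties_hmktau}. Fix a base point $(X,V_1)\in\mathcal{H}_k^\circ$ and consider the fiber $f^{-1}(X,V_1)=\h_\m(X)$, stratified by the locally closed pieces $(\h_{\m,k,\tau}^\circ)^{rs}\cap f^{-1}(X,V_1)$ as $\tau$ ranges over $\IncTree_{n-k}(G_\m)$ (the pieces indexed by $\tau$ with $(X,V_1)\in\mathcal{H}_k^\circ$; by item (1) of Proposition \ref{prop:properties_hmktau} only increasing trees contribute). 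By item (3), each such piece fibers over $\h_{\m\setminus\tau,\mathcal{H}_k^\circ}^{rs}$ with fibers affine spaces $\mathbb{A}^{\wt_\m(\tau)}$, so after restricting to $(X,V_1)$ it is an affine bundle of rank $\wt_\m(\tau)$ over the Hessenberg variety $\h_{\m\setminus\tau}(X)$ for $\mathbb{C}^n/V'_{n-k}\cong\mathbb{C}^k$. First I would invoke that affine-bundle maps induce isomorphisms on cohomology (up to the shift by $2\wt_\m(\tau)$ in degree, i.e.\ multiplication by $q^{\wt_\m(\tau)}$ on Poincaré series), so each stratum contributes $q^{\wt_\m(\tau)}$ times the (graded) cohomology of $\h_{\m\setminus\tau}(X)$.

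Next I would argue that the whole computation is $S_k$-equivariant and that the cohomology long exact sequences of the stratification degenerate. The key point is purity: $\h_\m^{rs}\to G^{rs}$ is smooth and, more relevantly here, each stratum has cohomology concentrated in even degrees (Hessenberg varieties have this property by Tymoczko's paving by affines, and affine bundles preserve it), which forces the spectral sequence of the filtration by strata to collapse and the class of the whole to be the sum of the classes of the strata. Working $S_k$-equivariantly — the dot action on each $\h_{\m\setminus\tau}(X)$, with $\mathbb{C}^n/V'_{n-k}$ playing the role of the ambient space on which $\pi_1(\mathcal{H}_k^\circ,(X,V_1))\to S_k$ acts via the eigenvalues — the Frobenius character of the stratum indexed by $\tau$ is $q^{\wt_\m(\tau)}\ch\bigl(H^*(\h_{\m\setminus\tau}(X))\bigr)$. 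By Proposition \ref{prop:csf_H} (Brosnan--Chow's theorem applied relative to $\mathcal{H}_k^\circ$), that character is $q^{\wt_\m(\tau)}\omega(\csf_q(G_{\m\setminus\tau}))$. Summing over $\tau\in\IncTree_{n-k}(G_\m)$ gives the claimed formula, and the fact that $Rf_*(\mathbb{C}_{\h_\m^{rs}})|_{\mathcal{H}_k^\circ}$ is a genuine (graded) local system induced by an $S_k$-module follows from Equation \eqref{eq:RF_decomp}, since $IC_{\mathcal{H}_j}(\mathcal{L}_j)$ restricted to the open stratum $\mathcal{H}_k^\circ$ vanishes for $j>k$ and equals $\mathcal{L}_k$ for $j=k$, while the $IC$ sheaves for $j<k$ do not meet $\mathcal{H}_k^\circ$.

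The main obstacle I anticipate is the bookkeeping that makes item (3) of Proposition \ref{prop:properties_hmktau} genuinely $S_k$-equivariant and compatible with the monodromy identification $\pi_1(\mathcal{H}_k^\circ,(X,V_1))\to S_k$: one must check that the affine-space fibration and the identification of the base with $\h_{\m\setminus\tau,\mathcal{H}_k^\circ}^{rs}$ intertwine the local system structures, i.e.\ that the $S_k$-cover of $\mathcal{H}_k^\circ$ recording the eigen-flag of $X$ on $\mathbb{C}^n/V'_{n-k}$ pulls back correctly. This is where the precise description $V'_{n-k}=\bigoplus_{j=0}^{n-k-1}X^jV_1$ and the vertex-deletion description of $G_{\m\setminus\tau}$ are used. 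Once that is in place, the cohomological part is formal, given the evenness/purity that guarantees the stratification spectral sequence degenerates.
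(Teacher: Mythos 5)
Your overall route matches the paper's: restrict to $f^{-1}(\mathcal{H}_k^\circ)$, use the stratification of Proposition \ref{prop:properties_hmktau} together with item (3) (affine fibrations of rank $\wt_\m(\tau)$ over $\h_{\m\setminus\tau,\mathcal{H}_k^\circ}^{rs}$), deduce degeneration of the spectral sequence from the evenness of the cohomology of the Hessenberg fibers, and read off each stratum's contribution via Proposition \ref{prop:csf_H}. The only difference of presentation is that you work fiberwise over $(X,V_1)$ while the paper works relatively over $\mathcal{H}_k^\circ$, which is a cosmetic distinction.

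One point, however, is factually wrong and should be repaired: you assert that ``the $IC$ sheaves for $j<k$ do not meet $\mathcal{H}_k^\circ$.'' This is backwards. The $\mathcal{H}_j$ are \emph{nested}: $\mathcal{H}_{n-1}\subset\cdots\subset\mathcal{H}_1\subset\mathcal{H}_0=G^{rs}\times\mathbb{P}^{n-1}$, so for $j<k$ we have $\mathcal{H}_k^\circ\subset\mathcal{H}_k\subset\mathcal{H}_j=\overline{\mathcal{H}_j^\circ}$, and hence $IC_{\mathcal{H}_j}(\mathcal{L}_j)|_{\mathcal{H}_k^\circ}$ is nonzero in general; what \emph{does} vanish on $\mathcal{H}_k^\circ$ are the summands with $j>k$. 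The correct statement is that $Rf_*(\mathbb{C}_{\h_\m^{rs}})|_{\mathcal{H}_k^\circ}=\bigoplus_{j\le k}IC_{\mathcal{H}_j}(\mathcal{L}_j)|_{\mathcal{H}_k^\circ}$, each summand a local system (this is the content of the later Lemma \ref{lemLi_restricted_Lj}). This matters: Corollary \ref{cor:ch_f*} computes the character of the \emph{entire} restriction, not of $L_k$ alone; conflating the two is precisely the mistake your erroneous sentence would lead to, and would make Theorem \ref{thm:main_csf_rec} (which isolates $\ch(L_k)$ by an inductive M\"obius-type inversion over the $j\le k$ summands) redundant or ill-posed. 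Fortunately your main stratification argument already shows directly that the restriction is a direct sum of shifted local systems — because the affine fibrations from item (3) sit over proper smooth maps whose fibers have only even cohomology — so you do not need the $IC$-decomposition for this conclusion at all; just drop the incorrect aside.
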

\begin{proof}
 By base change we have that $Rf_*(\mathbb{C}_{\h_{\m}^{rs}})|_{\mathcal{H}_k^\circ} = Rf_*(\mathbb{C}_{\h_\m^{rs}\cap f^{-1}(\mathcal{H}_k^\circ)})$. By the stratification
 \[
 \h_\m^{rs}\cap f^{-1}(\mathcal{H}_k^\circ) = \bigsqcup_{\tau\in \IncTree_{n-k}(\m) }(\h_{\m,k,\tau}^{\circ})^{rs},
 \]
 by the fact that $\h_{\m\setminus \tau, \mathcal{H}_k^\circ}^{rs}\to \mathcal{H}_{k}^{\circ}$ is a proper smooth map whose fibers have no odd cohomology and by Proposition \ref{prop:properties_hmktau} item (3)  we have that 
 \[
 Rf_*(\mathbb{C}_{\h_\m^{rs}\cap f^{-1}(\mathcal{H}_k^\circ)})=\bigoplus_{\tau}R(f_{\m\setminus\tau})_*(\mathbb{C}_{\h_{\m\setminus \tau, \mathcal{H}_k^\circ}^{rs}})[-2\wt_\m(\tau)].
 \]
 The proof is complete after applying Proposition \ref{prop:csf_H} to the right-hand side.
 \end{proof}
We now state two lemmas that we will use in the proof of Proposition \ref{prop:properties_hmktau} item (3). 
\begin{lemma}
\label{lem:X_block}
 Let $X$ be a $n\times n$ matrix with block decomposition
 \[
 X= \left(\begin{array}{cc}
     X_1 & 0\\
     0 & X_2
 \end{array}
 \right), \text{ where } X_1 =  \left(\begin{array}{ccccc}
     0 & 0 & \cdots & 0 & a_1 \\
     1 & 0 & \cdots & 0 & a_2  \\
     0 & 1 & \cdots & 0 & a_3   \\
          \vdots \\
     0 & 0 & \cdots & 1 & a_{n-k}\\
 \end{array}\right)
 \]
 and let $u$ be an upper triangular matrix with diagonal entries $1$ and block decomposition
 \[
 u=(u_{i,j})_{i,j=1,\ldots, n}= \left(\begin{array}{cc}
     u_1 & u_2\\
     0 & \Id
 \end{array}
 \right).
 \]
  Then, for every $i=2,\ldots, n-k$ and $j=i+1,\ldots, n$ we have that
 \[
 (u^{-1}Xu)_{i,j} = u_{i-1,j}+P_{i,j}(u_{i',j'}; i'\geq i),
 \]
 where $P_{i,j}(u_{i',j'}; i'\geq i)$ is a polynomial in $u_{i',j'}$ for $i'=i, i+1,\ldots, n-k$ and $j'=i'+1,\ldots, n$. Moreover, 
 \[
 (u^{-1}Xu)_{i,j} = X_{i,j}
 \]
 for every $i=n-k+1,\ldots, n$ and $j=1,\ldots, n$.
\end{lemma}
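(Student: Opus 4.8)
The plan is to expand
$u^{-1}Xu = Xu + (u^{-1}-\Id)(Xu)$
and to control the two summands separately, using (a) the explicit shape of the rows of $X$ and (b) the Neumann expansion $u^{-1}=\sum_{m\ge 0}(\Id-u)^{m}$, which is a finite sum because $\Id-u$ is strictly upper triangular.

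First I would dispose of the claim for $i\in\{n-k+1,\dots,n\}$ by a block computation. Writing $u=\bigl(\begin{smallmatrix}u_{1}&u_{2}\\0&\Id\end{smallmatrix}\bigr)$ with $u_{1}$ of size $(n-k)\times(n-k)$, one has $u^{-1}=\bigl(\begin{smallmatrix}u_{1}^{-1}&-u_{1}^{-1}u_{2}\\0&\Id\end{smallmatrix}\bigr)$, and hence $u^{-1}Xu=\bigl(\begin{smallmatrix}u_{1}^{-1}X_{1}u_{1}&\;u_{1}^{-1}X_{1}u_{2}-u_{1}^{-1}u_{2}X_{2}\\0&X_{2}\end{smallmatrix}\bigr)$. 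The bottom block of rows is $(\,0\mid X_{2}\,)$, which is exactly the bottom block of rows of $X$; therefore $(u^{-1}Xu)_{i,j}=X_{i,j}$ for all $i>n-k$ and all $j$.

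For the first assertion, fix $i\in\{2,\dots,n-k\}$ and $j>i$, and write $(u^{-1}Xu)_{i,j}=(Xu)_{i,j}+\sum_{l>i}(u^{-1})_{i,l}(Xu)_{l,j}$ (the $l=i$ and $l<i$ terms drop out because $u^{-1}$ is unipotent upper triangular). The $i$-th row of $X$, for $2\le i\le n-k$, has nonzero entries only in column $i-1$ (where it equals $1$) and in column $n-k$ (where it equals $a_{i}$), so $(Xu)_{i,j}=u_{i-1,j}+a_{i}u_{n-k,j}$; this already produces the leading term $u_{i-1,j}$. For the remaining sum I would use two observations: from the Neumann expansion, every $(u^{-1})_{i,l}$ with $l>i$ is a polynomial in entries $u_{p,q}$ with $i\le p<q\le l$, hence in entries with row index $\ge i$; and for $l>i$ one has $(Xu)_{l,j}=u_{l-1,j}+a_{l}u_{n-k,j}$ if $l\le n-k$ (the occurring $u$-entries then having row indices $l-1\ge i$ and $n-k\ge i$), while $(Xu)_{l,j}=X_{l,j}$ is a constant if $l>n-k$. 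Multiplying these out, every $u$-entry appearing in $\sum_{l>i}(u^{-1})_{i,l}(Xu)_{l,j}$ has row index $\ge i$; after substituting the constant $\delta_{p,q}$ for every entry $u_{p,q}$ with $p>n-k$, and $0$ or $1$ for every entry with $q\le p$, what is left is a polynomial in exactly the variables $u_{i',j'}$ with $i\le i'\le n-k$ and $i'<j'\le n$. Adding the term $a_{i}u_{n-k,j}$ gives the polynomial $P_{i,j}$ and the claimed identity.

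The only step requiring genuine care — the one I would double-check — is this last bookkeeping of index ranges: one must invoke the fact that $u$ acts as the identity on the last $k$ coordinates in order to discard the would-be variables $u_{p,q}$ with $p>n-k$, and note that $u_{p,q}$ with $q\le p$ is a constant. Granting this, the rest is a direct (and fairly short) matrix computation.
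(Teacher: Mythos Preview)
Your proof is correct and follows essentially the same route as the paper. The paper computes the block form $u^{-1}Xu=\bigl(\begin{smallmatrix}u_1^{-1}X_1u_1 & u_1^{-1}(X_1u_2-u_2X_2)\\ 0 & X_2\end{smallmatrix}\bigr)$ and then lists three properties (that $(u_1^{-1})_{i,j}$ involves only $u_{i',j'}$ with $i'\ge i$; that $(u_2X_2)_{i,j}$ involves only entries with row index $i$; and that $(X_1u_1)_{i,j}$, $(X_1u_2)_{i,j}$ equal $u_{i-1,j}$ plus entries with row index $\ge i$) from which the claim follows; your row-by-row expansion $(u^{-1}Xu)_{i,j}=(Xu)_{i,j}+\sum_{l>i}(u^{-1})_{i,l}(Xu)_{l,j}$ together with the Neumann series is just a more explicit unpacking of the same computation.
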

\begin{proof}
  We have that 
  \[
  u^{-1}Xu = \left( \begin{array}{cc}
  u_1^{-1}X_1u_1 & u_1^{-1}(X_1u_2+u_2X_2)\\
   0 & X_2
  \end{array}\right),
  \]
  which is sufficient to prove that 
  \[
 (u^{-1}Xu)_{i,j} = X_{i,j}
 \]
 for every $i=n-k+1,\ldots, n$ and $j=1,\ldots, n$.  Moreover, the following properties hold.
  \begin{enumerate}
      \item The entry $(u_1^{-1})_{i,j}$, for $j>i$ is a polynomial in $u_{i',j'}$ for $i'\geq i$ and $(u_1^{-1})_{ii}=1$.
      \item The entry $(u_2X_2)_{i,j}$ is a linear combination of $u_{i',j}$ for $i'=i$.
      \item $(X_1u_1)_{i,j}$ and $(X_1u_2)_{i,j}$ are $u_{i-1,j}$ plus a linear combination of $u_{i',j'}$ for $i'\geq i$.
  \end{enumerate}
  The result follows.
\end{proof}

\begin{lemma}
\label{lemsigma_inversion}
Let $\m$ be a Hessenberg function, $k$ be a nonnegative integer, and $\sigma\in \IncTree(G_\m)$ be a permutation. If $i<j$ is an inversion of $\sigma$ such that $\sigma(i)>\m(\sigma(j))$ then $i-1<j$ is an inversion of $\sigma$.
\end{lemma}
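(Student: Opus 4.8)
The plan is a short argument by contradiction. We must show that if $i<j$ is an inversion of $\sigma$ (so $\sigma(i)>\sigma(j)$) with the extra property $\sigma(i)>\m(\sigma(j))$, then $\sigma(i-1)>\sigma(j)$ as well. I would first locate the index $i$. Since $\sigma(i)>\m(\sigma(j))\ge\sigma(j)\ge 1$ we have $\sigma(i)\ge 2$, and as $\sigma(1)=1$ this forces $i\ge 2$, so $i-1$ is a genuine index and the index inequality $i-1<j$ is automatic. Moreover, because $\sigma\in(S_n)^{J_k}$ the values $\sigma(n-k+1)<\sigma(n-k+2)<\cdots<\sigma(n)$ are increasing, so any inversion $i<j$ of $\sigma$ must have $i\le n-k$, whence $1\le i-1\le n-k-1$. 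This is exactly the range on which $\sigma$ satisfies the Hessenberg inequality $\sigma(\ell+1)\le\m(\sigma(\ell))$ built into membership in $\IncTree_{n-k}(G_\m)$; applying it at $\ell=i-1$ gives $\sigma(i)\le\m(\sigma(i-1))$.

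Now suppose, for contradiction, that $(i-1,j)$ is not an inversion, i.e.\ $\sigma(i-1)<\sigma(j)$ (equality being impossible). Since $\m$ is non-decreasing, $\sigma(i-1)<\sigma(j)$ gives $\m(\sigma(i-1))\le\m(\sigma(j))$, while the hypothesis gives $\m(\sigma(j))<\sigma(i)$. Combining with $\sigma(i)\le\m(\sigma(i-1))$ from the previous paragraph, we get $\sigma(i)\le\m(\sigma(i-1))\le\m(\sigma(j))<\sigma(i)$, which is absurd. Hence $\sigma(i-1)>\sigma(j)$, that is, $i-1<j$ is an inversion of $\sigma$.

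The whole argument is elementary, so there is no real obstacle; the only point demanding attention is the index bookkeeping — verifying $i\ge 2$ (so that $i-1$ exists, using $\sigma(1)=1$) and $i\le n-k$ (so that the Hessenberg inequality defining $\IncTree_{n-k}(G_\m)$ is available at $i-1$, as opposed to the tail positions $\ge n-k+1$ where $\sigma$ is merely increasing). Once these are in place, the contradiction follows purely from the monotonicity of $\m$.
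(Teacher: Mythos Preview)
Your argument is correct and follows essentially the same route as the paper: use $\sigma(1)=1$ to get $i\ge 2$, apply the Hessenberg inequality $\sigma(i)\le\m(\sigma(i-1))$, and combine with monotonicity of $\m$ to force $\sigma(i-1)>\sigma(j)$. If anything, your index bookkeeping is more careful than the paper's, since you explicitly verify $i\le n-k$ (using that $\sigma$ is increasing on positions $>n-k$) so that the defining inequality of $\IncTree_{n-k}(G_\m)$ is actually available at $i-1$.
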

\begin{proof}
  We begin by noticing that $i>1$, otherwise, $i<j$ would not be an inversion because $\sigma(1)=1$.
  We have that $\sigma(i)>\m(\sigma(j))$ and $\sigma(i)\leq \m(\sigma(i-1))$, then $\m(\sigma(i-1))>\m(\sigma(j))$ which implies that $\sigma(i-1)>\sigma(j)$ and hence $i-1<j$ is an inversion of $\sigma$.
\end{proof}


\begin{proof}[Proof of Proposition \ref{prop:properties_hmktau} item (3)]
Let $\sigma$ be the permutation in $(S_n)^{J_k}$ associated to $\tau$. For a pair $(X,V_1)\in \mathcal{H}_k^\circ$, recall the construction of the flag 
\begin{equation}
\label{eq:V'_bullet_from_V1}
V'_\bullet = (V'_0\subset V'_1\subset \ldots \subset V'_{n-k}),
\end{equation}
where $V'_j = \sum_{i=0}^{j-1}X^iV_1$. Moreover, for a triple  $(X,V_1, F_\bullet)\in \h_{\m\setminus \tau,\mathcal{H}_k^\circ}$ we construct the flag $F'_\bullet$ as
\begin{equation}
\label{eq:F'_bullet_from_F}
\{0\}= F'_0\subset  F'_1\subset F'_2\subset \cdots \subset F'_{n-k} \subset F'_{n-k+1} \subset \cdots\subset F'_n = \mathbb{C}^n,
\end{equation}
where $F'_{j}=V'_j$ for $j=1,\ldots, n-k$ and $F'_{n-k+i}/V'_{n-k} = F_i$. Note that  $V'_\bullet$ (respectively, $F'_\bullet$) varies depending on $(X,V_1)$ (respectively,  $(X,V_1, F_\bullet)$).

Let us prove that
$\h_\m(X)\cap \Omega_{\sigma}(F'_{\bullet})$ is isomorphic to $\mathbb{A}^{\wt_{\m}(\tau)}$. Let $F'_\bullet = g'B$ as an element of $GL_n/B$. We can choose $g'$ such that
\[
X' = g'^{-1}Xg' = \left(\begin{array}{cc}
     X'_1 & 0\\
     0 & X'_2
 \end{array}
 \right), \text{ where } X'_1 =  \left(\begin{array}{ccccc}
     0 & 0 & \cdots & 0 & a_1 \\
     1 & 0 & \cdots & 0 & a_2  \\
     0 & 1 & \cdots & 0 & a_3   \\
          \vdots \\
     0 & 0 & \cdots & 1 & a_{n-k}\\
 \end{array}
 \right)
\]
(the matrix in Lemma \ref{lem:X_block}). Indeed, we can choose $g'=(v_1,\ldots, v_{n-k}, v_{n-k+1},\ldots, v_{n})$ where $v_{i+1}=Xv_i$ for $i=1,\ldots, n-k-1$, $Xv_{n-k}\in \langle v_1,\ldots, v_{n-k-1}\rangle$ and $Xv_{n-k+j}\in \langle v_{n-k+1},\ldots, v_{n}\rangle$ for $j=1,\ldots, k$.

On the other hand, if $gB$ is the representative of $V_\bullet$, we have that $V_\bullet\in \Omega_{\sigma}(F'_\bullet)$ if and only if $g \in g' U^{\sigma}\dot{\sigma} B$, in particular, we can choose $g$ such that  $g=g'u\dot{\sigma}$ for some $u\in U^{\sigma}$. In this case, the condition $g^{-1}Xg\in \overline{B\dw_\m B }$ becomes
\begin{equation}
    \label{eq:Xbar_wm}
    \overline{X}:= \dot{\sigma}^{-1}u^{-1}X'u \dot{\sigma} \in \overline{B\dw_\m B}.
\end{equation}
The condition above is equivalent to $\overline{X}_{ij}=0$ for every $i,j=1,\ldots, n$ such that $i>\m(j)$. Since $\overline{X}_{ij} = (u^{-1}X'u)_{\sigma^{-1}(i),\sigma^{-1}(j)}$, we have that Equation \eqref{eq:Xbar_wm} is equivalent to $(u^{-1}X'u)_{i,j}=0$ for every $i,j=1,\ldots, n$ such that  $\sigma(i)>\m(\sigma(j))$. \par

Since $u\in U^{\sigma}\cong \mathbb{A}^{\ell(\sigma)}$ we can write $u = (u_{i,j})$ where $u_{i,i}=1$, $u_{i,j}=0$ if $i>j$ or $i<j$ is not an inversion of $\sigma$. That means, that the coordinates of  $\mathbb{A}^{\ell(\sigma)}$ are $u_{i,j}$ for $i<j$  an inversion of $\sigma$. Since $\sigma(\ell+1)>\sigma(\ell)$ for every $\ell=n-k+1,\ldots, n-1$, we have that $u\in U^{\sigma}$ has block decomposition as in Lemma \ref{lem:X_block}.

 First, we check that the condition $(u^{-1}X'u)_{i,j}=0$ for every $i,j=1,\ldots, n$ such that  $\sigma(i)>\m(\sigma(j))$ is trivially satisfied whenever $i>n-k$. If $i>n-k$ and $j\leq n-k$ then $(u^{-1}X'u)_{i,j}=0$ by Lemma \ref{lem:X_block}.\par 
 
 Now, we consider the case $i=n-k+i'_0$, $j=n-k+i'_1$ for positive integers $i'_0,i'_1$. Since $(u^{-1}X'u)_{n-k+i'_0,n-k+i'_1}=X'_{n-k+i'_0,n-k+i'_1}$ by Lemma \ref{lem:X_block}, we have that $(u^{-1}X'u)_{n-k+i'_0,n-k+i'_1}=0$ whenever $i'_0>(\m\setminus\tau)(i'_1)$, because we started with $(X,V_1,F_\bullet)$ in $\h_{\m\setminus\tau, \mathcal{H}_k^{\circ}}$. Let $j_1,\ldots, j_k$ be as in Equation \eqref{eq:j_image_tau}. Then, $\sigma(n-k+i')=j_{i'}$, moreover, we have that $j_{i'_0}>\m(j_{i'_1})$ if, and only if, $i'_0>(\m\setminus\tau)(i'_1)$. This means that $(u^{-1}X'u)_{n-k+i'_0,n-k+i'_1}=0$ whenever $j_{i'_0}>\m(j_{i'_1})$, which is to say, whenever $\sigma(n-k+i'_0)>\m(\sigma(n-k+i'_1))$. \par

 Let us assume that $i\leq n-k$. Moreover, since $\sigma \in \IncTree(G_\m)$ we have that $\sigma(i)>\m(\sigma(j))$ implies that $i\neq j+1$, moreover, if $i> j+1$ then $(u^{-1}X'u)_{i,j} = 0$ by Lemma \ref{lem:X_block}. So, we can further restrict ourselves to the case $i<j$, $2\leq i\leq n-k$ and $\sigma(i)>\m(\sigma(j))$ (recall that $\sigma(1)=1$).\par

By Lemma \ref{lem:X_block}, we have that $(u^{-1}Xu)_{i,j} = u_{i-1,j}+P(u_{i',j'}; i'\leq i)$ and by Lemma \ref{lemsigma_inversion} we have that $u_{i-1,j}$ is a coordinate of $\mathbb{A}^{\ell(\sigma)}$ for every $i,j$ satisfying  $i<j$ and $\sigma(i)>\m(\sigma(j))$. Hence, we have that $Y_\m(X)\cap \Omega_{\sigma}(V'_{\bullet})$ is an affine space of dimension $\ell(\sigma)-|\{(i,j);i<j, \sigma(i)>\m(\sigma(j)\}|=\wt_\m(\tau)$. This proves the claim that $\h_\m(X)\cap \Omega_\sigma(F'_\bullet)$ is isomorphic to $\mathbb{A}^{\wt_{\m}(\tau)}$.\par

We will now construct a map $(\h_{\m,k,\tau}^{\circ})^{rs}\to \h_{\m\setminus \tau,\mathcal{H}_k^{\circ}}$ such that the fiber over $(X,V_1,F_\bullet)$ is $Y_\m(X)\cap \Omega_{\sigma}(F'_{\bullet})$, where $F'_\bullet$ is constructed as in Equation \eqref{eq:F'_bullet_from_F}. \par 

For a point $(X,V_\bullet)\in (\h_{\m,k,\tau}^\circ)^{rs}$, we set  $V'_\bullet$ as in Equation \eqref{eq:V'_bullet_from_V1} for the pair $(X,V_1)$. Consider the map 
\begin{align*}
(\h_{\m,k,\tau}^{\circ})^{rs}&\to \h_{\m\setminus \tau,\mathcal{H}_k^{\circ}}\\
 (X,V_\bullet) &\to V'_{n-k}\subset V'_{n-k}+V_{j_1}\subset V'_{n-k}+V_{j_2}\subset \cdots\subset V'_{n-k}+V_{j_k},
\end{align*}
where $\{j_1,\ldots, j_k\}=[n]\setminus \Ima(\tau)$. The map is well defined because 
\[
\dim V_{j_i}\cap V'_{n-k} = r_{j_i,n-k}(\tau) = j_i - i,
\]
hence 
\[
\dim V'_{n-k}+V_{j_i} = n-k+j_i -(j_i-i) = n-k+i.
\]
Moreover, 
\[
X(V_{n-k}'+V_{j_i})\subset V_{n-k}'+V_{\m(j_i)} = V_{n-k}' + V_{j_{(\m\setminus \tau)(i)}},
\]
where the equality follows from the fact that $r_{\ell+1, n-k} =r_{\ell, n-k}+1$ whenever $\ell+1\notin\{j_1,\ldots, j_k\}$.

If $F_\bullet$ is the image of $(X,V_\bullet)$ and we set $F'_\bullet$ to be the flag as in Equation \eqref{eq:F'_bullet_from_F}, we would have that $F'_\bullet$ would be equal to
\[
V_1\subset V'_2\subset \cdots \subset V'_{n-k}\subset V'_{n-k}+V_{j_1}\subset V'_{n-k}+V_{j_2}\subset \cdots\subset V'_{n-k}+V_{j_k}.
\]
Then it is clear that $V_\bullet\in \Omega_\sigma(F'_\bullet)$. Hence, the preimage of $V'_\bullet$ is precisely $\h_{\m}(X)\cap \Omega_\sigma(V'_\bullet)$. This finishes the proof.
\end{proof}

\begin{lemma}
\label{lemLi_restricted_Lj}
Let $L_i$ be a graded $S_i$-module. Consider $\mathcal{L}_i$ be the induced local system in $\mathcal{H}_i^{\circ}$. If $j > i$, then $IC_{\mathcal{H}_i}(L_i)|_{\mathcal{H}_j^\circ}$ is a local system induced by a $S_j$-module $L_j$ and 
\[
\ch(L_j)=\ch(L_i)h_{j-i}.
\]
\end{lemma}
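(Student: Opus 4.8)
The plan is to understand the restriction of the intersection cohomology complex $IC_{\mathcal{H}_i}(\mathcal{L}_i)$ to the smaller stratum $\mathcal{H}_j^\circ$ (for $j>i$) in purely local, geometric terms, and then to identify the resulting $S_j$-module. First I would recall that $\mathcal{H}_j \subset \mathcal{H}_i$ for $j \geq i$, so that $\mathcal{H}_j^\circ$ is indeed a stratum on which the restriction makes sense, and that by the characterization of parabolic character sheaves (\cite[Corollary 1.9]{AN_parabolic}) any constructible complex of the form $IC_{\mathcal{H}_i}(\mathcal{L}_i)$ restricts on $\mathcal{H}_j^\circ$ to a (shifted) local system induced by an $S_j$-module; so the content of the lemma is the identity $\ch(L_j) = \ch(L_i) h_{j-i}$. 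The key local model: over a point $(X,V_1)\in \mathcal{H}_j^\circ$, the fiber of the map that computes the stalk of $IC_{\mathcal{H}_i}(\mathcal{L}_i)$ parametrizes the ways of finding an $X$-invariant subspace $V'_{n-i}$ with $V_1 \subset V'_{n-i} \subset V'_{n-j}$, where $V'_{n-j}=\bigoplus_{\ell=0}^{n-j-1}X^\ell V_1$ is the canonical invariant subspace attached to the point of $\mathcal{H}_j^\circ$. Equivalently one is choosing an $X$-invariant subspace of dimension $(n-i)-(n-j) = j-i$ inside the $j$-dimensional space $\mathbb{C}^n/V'_{n-j}$ on which $X$ acts regular semisimply; the space of such is a discrete set of $\binom{j}{j-i}$ points permuted by the monodromy $S_j$ exactly as the cosets $S_{j-i}\times S_i \backslash S_j$, i.e. as the permutation module $\mathrm{Ind}_{S_{j-i}\times S_i}^{S_j}(\mathbf{1}\boxtimes L_i)$.

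The key steps, in order, would be: (1) Identify $IC_{\mathcal{H}_i}(\mathcal{L}_i)|_{\mathcal{H}_j^\circ}$ with $R(q)_* $ of the trivial sheaf on the incidence variety $Z_{ij}^\circ := \{(X,V_1,V'_{n-i}) : (X,V_1)\in\mathcal{H}_j^\circ,\ V_1\subset V'_{n-i}\subset V'_{n-j},\ XV'_{n-i}=V'_{n-i}\}$ twisted by the pullback of $\mathcal L_i$, using the smoothness of $q\colon Z_{ij}^\circ \to \mathcal{H}_j^\circ$ together with the decomposition theorem / smallness estimate which forces no shift to appear (the fiber is zero-dimensional, so there is no truncation issue and $IC$ restricts to the honest pushforward); this is where \cite[Sections 4 and 5]{AN_parabolic} or the analogous computation of parabolic character sheaves on $G^{rs}\times \mathrm{Gr}$ is invoked. (2) Compute the fiber of $q$ as the finite set of $X$-invariant $(j-i)$-planes in $\mathbb{C}^n/V'_{n-j}$ and observe that the residual monodromy of the cover $Z_{ij}^\circ\to\mathcal{H}_j^\circ$, combined with the $S_i$-cover coming from $\mathcal L_i$ on each sheet, assembles into the $S_j$-cover whose associated representation is $\mathrm{Ind}_{S_{j-i}\times S_i}^{S_j}(\mathbf 1 \boxtimes L_i)$; here I would use the explicit description of $\pi_1(\mathcal{H}_k^\circ,(X,V_1))\to S_k$ via \cite[Proposition 7.4]{AN_parabolic} and functoriality of these covers under the inclusion $\mathcal{H}_j^\circ \hookrightarrow \overline{\mathcal{H}_i}$. (3) Translate through the Frobenius characteristic: $\ch\big(\mathrm{Ind}_{S_{j-i}\times S_i}^{S_j}(\mathbf 1\boxtimes L_i)\big) = h_{j-i}\cdot \ch(L_i)$ by the standard fact that induction from a Young subgroup corresponds to multiplication of symmetric functions, with the trivial module of $S_{j-i}$ going to $h_{j-i}$; grading is preserved throughout since the only nonzero cohomology of the zero-dimensional fiber sits in degree $0$, so no $q$-factor is introduced.

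I expect the main obstacle to be step (1): namely verifying carefully that $IC_{\mathcal{H}_i}(\mathcal{L}_i)$ has \emph{no} extra shifts or higher local cohomology contributions along $\mathcal{H}_j^\circ$, i.e. that its restriction is concentrated in a single perverse degree and equals the naive pushforward from the incidence variety. This is a smallness/purity statement; I would either cite it directly from the classification of parabolic character sheaves in \cite{AN_parabolic} (where all such $IC$ sheaves and their restrictions to strata are computed), or argue it by hand using that the relevant incidence correspondence $\overline{Z_{ij}}\to \overline{\mathcal{H}_i}$ is semismall with the open stratum dense and the fiber over $\mathcal{H}_j^\circ$ finite, so that the decomposition theorem produces no summand supported on $\overline{\mathcal{H}_j}$ with a nontrivial shift. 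Everything after that — the combinatorics of the fiber, the monodromy bookkeeping, and the symmetric-function translation — is routine.
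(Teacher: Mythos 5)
Your proposal is correct and follows essentially the same geometric idea as the paper: realize $IC_{\mathcal{H}_i}(\mathcal{L}_i)$ via a finite (hence small) correspondence over $\mathcal{H}_i$, restrict to $\mathcal{H}_j^\circ$, and read off the monodromy of the finite fiber. The packaging differs in one mild respect. The paper first reduces to the case $L_i = \ind_{S_\lambda}^{S_i}$ (which suffices since these span the Grothendieck group) and then works with the explicit parabolic variety $\mathcal{H}_{i,\lambda}$ of invariant flags, so that the $IC$ sheaf becomes an honest pushforward of the constant sheaf and the fiber over $\mathcal{H}_j^\circ$ is identified with set partitions of $\{1,\dots,j\}$ of type $(j-i,\lambda)$. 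You instead keep $L_i$ general and describe the fiber as invariant $(j-i)$-dimensional subspaces of the $j$-dimensional quotient $\mathbb{C}^n/V'_{n-j}$, assembling with the residual $S_i$-cover to give $\mathrm{Ind}_{S_{j-i}\times S_i}^{S_j}(\mathbf{1}\boxtimes L_i)$; this is equivalent but avoids the reduction step by running the argument with the normalization $\widetilde{\mathcal{H}}_i$ and the pulled-back local system directly. The step you flag as ``the main obstacle'' — that the restriction of the $IC$ sheaf has no shifts and equals the naive pushforward — is precisely what the paper establishes, and it is handled not by a semismallness estimate but by the stronger observation that $\widetilde{\mathcal{H}}_i\to\mathcal{H}_i$ is finite (being a normalization map), which makes smallness automatic. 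One small slip: in your local model the containment should read $V'_{n-j}\subset V'_{n-i}$ (not $V'_{n-i}\subset V'_{n-j}$), since $n-i>n-j$; your subsequent dimension count and the passage to the quotient $\mathbb{C}^n/V'_{n-j}$ indicate you had the correct picture in mind.
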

\begin{proof}
  It is enough to prove the Lemma when $L_i=\ind_{S_\lambda}^{S_i}$ where $\lambda=(\lambda_1,\ldots, \lambda_{\ell(\lambda)})$ is a composition of $i$ and $\ind_{S_\lambda}^{S_i}$ is the representation of $S_i$ induced by the trivial representation of $S_\lambda$, in particular $\ch(L_i)=h_{\lambda}$. Write $\lambda^{tot}_j=\lambda_1+\ldots +\lambda_j$ and consider the varieties
  \begin{align*}
      \widetilde{\mathcal{H}}_i&:=\{(X, V_1\subset V_{n-i}); XV_{n-i}=V_{n-i}, X\in GL_n^{rs}\},\\
      \mathcal{H}_{i,\lambda} &:= \left\{\begin{array}{l}(X, V_1\subset V_{n-i}\subset V_{n-i+\lambda_1^{tot}}\subset V_{n-i+\lambda^{tot}_2}\subset\ldots\subset V_{n-i+\lambda^{tot}_{\ell(\lambda)}}); \\XV_{n-i+\lambda^{tot}_k}=V_{n-i+\lambda^{tot}_k}, X\in G^{rs}\end{array}\right\}.
  \end{align*}
  We have that $\widetilde{\mathcal{H}}_i$ is smooth and the natural forgetful map $\widetilde{\mathcal{H}}_{i}\to\mathcal{H}_{i}$ is a normalization map, hence finite. This implies that the map $f_{i, \lambda}\col\mathcal{H}_{i,\lambda}\to \mathcal{H}_i$ is also finite and hence small. So we have that 
  \[
  IC_{\mathcal{H}_i}(L_i)=(f_{i,\lambda})_*(\mathbb{C}_{\mathcal{H}_{i,\lambda}}),
  \]
  indeed, for a point $(X,V_1)\in \mathcal{H}_i^\circ$, we have that each point of the fiber $f_{i,\lambda}^{-1}(X,V_1)$ can be identified with a partition $\{1,\ldots, i\}=A_1\sqcup\ldots \sqcup A_{\ell(\lambda)}$ with $|A_{k}|=\lambda_k$. This means that the action of $S_i$ on the cohomology of this fiber is isomorphic to $\ind_{S_{\lambda}}^{S_i}$.\par

  Therefore, we have that
  \[
  IC_{\mathcal{H}_i}(L_i)|_{\mathcal{H}_j^\circ}=(g_{i,\lambda}^j)_*(\mathbb{C}_{f_{i,\lambda}^{-1}({\mathcal{H}_j^\circ})}),
  \]
  where $g_{i,\lambda}^j\col f_{i,\lambda}^{-1}(\mathcal{H}_j^\circ)\to \mathcal{H}_j^\circ$ is the restriction of $f_{i,\lambda}$ to $f_{i,\lambda}^{-1}(\mathcal{H}_j^\circ)$.\par 
  
  We can give a explicit description of $ f_{i,\lambda}^{-1}({\mathcal{H}_j^\circ})$. Since $(X,V_1)\in \mathcal{H}_j^\circ$, we have that there exists exactly one space $V_{n-j}$ such that $XV_{n-j}=V_{n-j}$ and $V_1\subset V_{n-j}$, so 
  \[
  f_{i,\lambda}^{-1}({\mathcal{H}_j^\circ})= \left\{ \begin{array}{l}(X, V_1\subset V_{n-j}\subset V_{n-i}\subset V_{n-i+\lambda_1^{tot}}\subset\ldots\subset V_{n-i+\lambda^{tot}_{\ell(\lambda)}});\\XV_{n-j}=V_{n-j}, XV_{n-i+\lambda^{tot}_k}=V_{n-i+\lambda^{tot}_k}, X\in G^{rs}\end{array}\right\}.
  \]
  By the same observation as before, we have that there is a bijection between the  fiber $(g_{i,\lambda}^j)^{-1}(X,V_1)$ and the set of partitions $\{1,\ldots, j\}=A_0\sqcup A_1\sqcup\ldots \sqcup A_{\ell(\lambda)}$ with $|A_0|=j-i$ and $|A_k|=\lambda_k$. Hence $L_j$ is isomorphic to $\ind_{S_{\lambda_1}}^{S_j}$ where $\lambda_1=(j-i,\lambda_1,\ldots, \lambda_{\ell(\lambda)})$. So $\ch(L_j)=h_{\lambda_1}=h_{j-i}h_{\lambda}$.
\end{proof}

\begin{theorem}
\label{thm:main_csf_rec}
   Write 
   \[
   f_*(\mathbb{C}_{\h_\m^{rs}})=\bigoplus_{i=0}^{n-1}IC_{\mathcal{H}_i}(\mathcal{L}_i)
   \]
   where each $\mathcal{L}_i$ is induced by a graded $S_i$-module $L_i$. Then
   \[
   \omega(\ch(L_i)) = \sum_{\tau \in IncTree_{\geq n-k}(G_\m)}(-1)^{|\tau|-n+k}q^{\wt_\m(\tau)}h_{|\tau|-n+k}\csf_q(G_{\m}\setminus \tau).
   \]
\end{theorem}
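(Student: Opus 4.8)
The plan is to restrict the decomposition \eqref{eq:RF_decomp} to each open stratum $\mathcal{H}_k^\circ$, compare the two sides there as graded $S_k$-modules, and thereby obtain a triangular system of symmetric-function identities which we then invert. Fix $k$ with $0\le k\le n-1$ (this is the index written $i$ in the displayed decomposition of the statement). On one side, Corollary~\ref{cor:ch_f*} identifies $Rf_*(\mathbb{C}_{\h_\m^{rs}})|_{\mathcal{H}_k^\circ}$ as a shifted local system induced by a graded $S_k$-module with Frobenius character
\[
b_k:=\sum_{\tau\in\IncTree_{n-k}(G_\m)}q^{\wt_\m(\tau)}\,\omega\!\left(\csf_q(\m\setminus\tau)\right).
\]
On the other side, I would restrict the right-hand side of \eqref{eq:RF_decomp} term by term, using that the closed subvarieties are nested, $\mathcal{H}_0\supseteq\mathcal{H}_1\supseteq\cdots\supseteq\mathcal{H}_{n-1}$ (since $X$ is regular semisimple, any proper $X$-invariant subspace containing $V_1$ is contained in a larger one), and $\mathcal{H}_k^\circ=\mathcal{H}_k\setminus\bigcup_{j>k}\mathcal{H}_j$. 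Hence for $i>k$ we have $\mathcal{H}_k^\circ\cap\mathcal{H}_i=\emptyset$, so $IC_{\mathcal{H}_i}(\mathcal{L}_i)|_{\mathcal{H}_k^\circ}=0$; for $i=k$ the restriction is $\mathcal{L}_k$, of character $\ch(L_k)$; and for $i<k$ we have $\mathcal{H}_k^\circ\subseteq\mathcal{H}_i$ lying in a deeper stratum of the support of $IC_{\mathcal{H}_i}$, so Lemma~\ref{lemLi_restricted_Lj} (applied to the pair $(i,k)$) gives a local system of character $\ch(L_i)\,h_{k-i}$. Equating Frobenius characters of the restriction to $\mathcal{H}_k^\circ$ yields
\[
b_k=\sum_{i=0}^{k}\ch(L_i)\,h_{k-i},\qquad k=0,\dots,n-1.
\]

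Next I would invert this system. Working in the degree-completed ring of symmetric functions over $\mathbb{Z}[q]$, set $\mathcal{B}=\sum_{k\ge 0}b_k$ and $\mathcal{A}=\sum_{k\ge 0}\ch(L_k)$; since $b_k$ and $\ch(L_k)$ are homogeneous of degree $k$, both series are well defined and the system reads $\mathcal{B}=\mathcal{A}\cdot\big(\sum_{j\ge 0}h_j\big)$. By the identity $\sum_{j=0}^{m}(-1)^je_jh_{m-j}=\delta_{m,0}$, the series $\sum_{j\ge 0}(-1)^je_j$ is inverse to $\sum_{j\ge 0}h_j$, so $\mathcal{A}=\mathcal{B}\cdot\big(\sum_{j\ge 0}(-1)^je_j\big)$, i.e.
\[
\ch(L_k)=\sum_{j=0}^{k}(-1)^je_j\,b_{k-j}
\]
(equivalently, this is proved by induction on $k$, checking the right-hand side satisfies the recursion). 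Applying the involution $\omega$, and using $\omega(e_j)=h_j$, $\omega\circ\omega=\mathrm{id}$, and $b_{k-j}=\sum_{\tau\in\IncTree_{n-k+j}(G_\m)}q^{\wt_\m(\tau)}\omega(\csf_q(\m\setminus\tau))$, we obtain
\[
\omega(\ch(L_k))=\sum_{j=0}^{k}(-1)^jh_j\sum_{\tau\in\IncTree_{n-k+j}(G_\m)}q^{\wt_\m(\tau)}\csf_q(G_\m\setminus\tau).
\]
Finally, reindexing the double sum by $|\tau|=n-k+j$ (as $j$ runs from $0$ to $k$, the trees $\tau$ run over $\IncTree_{\ge n-k}(G_\m)=\bigsqcup_{j=0}^{k}\IncTree_{n-k+j}(G_\m)$, and $j=|\tau|-n+k$) turns the last display into the formula in the statement.

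The step that needs the most care is the first paragraph: one must check that restricting \eqref{eq:RF_decomp} to $\mathcal{H}_k^\circ$ respects the $S_k$-module structures (these come from the monodromy of the $S_k$-Galois cover of $\mathcal{H}_k^\circ$, and their compatibility with the decomposition is part of its description via parabolic character sheaves), and that the vanishing of the $i>k$ summands and the identification of the $i<k$ summands occur at the level of graded local systems, not merely of Euler characteristics — but the vanishing is immediate from the disjointness $\mathcal{H}_k^\circ\cap\mathcal{H}_i=\emptyset$, and the identification is exactly Lemma~\ref{lemLi_restricted_Lj}. Once the triangular system is in hand, the remainder is a purely formal symmetric-function computation; the proof introduces no new geometry and merely repackages Corollary~\ref{cor:ch_f*} and Lemma~\ref{lemLi_restricted_Lj} into the asserted closed form.
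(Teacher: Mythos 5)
Your argument is correct and follows the same route as the paper's: restrict the decomposition \eqref{eq:RF_decomp} to each open stratum $\mathcal{H}_k^\circ$, apply Corollary~\ref{cor:ch_f*} to the left side and Lemma~\ref{lemLi_restricted_Lj} (together with the vanishing on the deeper strata) to the right side, obtain the triangular system $b_k=\sum_{i\le k}h_{k-i}\ch(L_i)$, and invert. The paper compresses the inversion into ``a simple induction,'' whereas you carry it out explicitly with the identity $\sum_j(-1)^je_jh_{m-j}=\delta_{m,0}$ and the reindexing by $|\tau|=n-k+j$; this is the same computation, just written out.
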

\begin{proof}
Restricting both sides of 
\[
   f_*(\mathbb{C}_{\h_\m^{rs}})=\bigoplus_{i=0}^{n-1}IC_{\mathcal{H}_i}(\mathcal{L}_i)
\]
to $\mathcal{H}_j^{\circ}$ and using Corollary \ref{cor:ch_f*} and Lemma \ref{lemLi_restricted_Lj} we have that
\[
\sum_{\tau\in \IncTree_{n-j}(G_\m)}q^{\wt_\m(\tau)}\omega(\csf_q(\m\setminus\tau)) = \sum_{i\leq j}h_{j-i}\ch(L_i).
\]
A simple induction finishes the proof.
\end{proof}

\section{Combinatorics of $\ch(L_k)$}
\label{sec:comb}
In this section we prove some combinatorial results about $\ch(L_J)$. Recall that $\rho_n$ is defined by the recursion
\[
[n]_qh_n=\sum_{i=0}^{n-1}h_{i}\rho_{n-i},
\]
equivalently, we have that
\begin{equation}
    \label{eq:rho_eh}
    \omega(\rho_n)=\sum_{i=1
}^n (-1)^{n-i}[i]_qe_ih_{n-i}.
\end{equation}

Recall the definition of the function $g_k(\m;x,q)$ in Definition \ref{def:gk}:
\[
g_k(\m;x,q):=\sum_{\substack{\sigma=\tau_1\cdots \tau_{j}\in S_{n,\m}\\ |\tau_1|\geq n-k}} (-1)^{|\tau_1|-n
+k}q^{\wt_\m(\sigma)}h_{|\tau_1|-n+k}\omega(\rho_{|\tau_2|}\cdots \rho_{|\tau_j|}).
\]
We begin, with the following proposition that, together with Theorem \ref{thm:main_csf_rec}, proves Theorem \ref{thm:main}.

\begin{proposition}
   We have that
   \[
   g_k(\m;x,t)=\sum_{\tau \in IncTree_{\geq n-k}(G_\m)}(-1)^{|\tau|-n+k}q^{\wt_\m(\tau)}h_{|\tau|-n+k}\csf_q(G_{\m}\setminus \tau)
   \]
\end{proposition}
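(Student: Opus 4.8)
The plan is to show that the two expressions agree by unfolding the definition of $\csf_q(G_{\m}\setminus\tau)$ via the main result of \cite{ANtree}, and then re-indexing the resulting double sum so that it becomes the sum over $S_{n,\m}$ appearing in Definition \ref{def:gk}. Concretely, I would start from the right-hand side
\[
\sum_{\tau \in \IncTree_{\geq n-k}(G_\m)}(-1)^{|\tau|-n+k}q^{\wt_\m(\tau)}h_{|\tau|-n+k}\csf_q(G_{\m}\setminus \tau),
\]
and substitute the formula from \cite{ANtree}, namely $\csf_q(G_{\m\setminus\tau};x,q)=\sum_{\sigma'=\tau'_1\cdots\tau'_j\in S_{|\m\setminus\tau|,\,\m\setminus\tau}}q^{\wt_{\m\setminus\tau}(\sigma')}\omega(\prod\rho_{|\tau'_i|})$, where $\m\setminus\tau$ is the Hessenberg function on $[k]$ (with $|\tau|=n-k$) obtained by deleting the vertices $\Ima(\tau)$ as defined before Proposition \ref{prop:properties_hmktau}. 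This produces a double sum over pairs $(\tau,\sigma')$.

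**The key bijection.** The heart of the argument is a bijection between such pairs $(\tau,\sigma')$ — with $\tau\in\IncTree_{n-\ell}(G_\m)$ for some $\ell\le k$ and $\sigma'\in S_{\ell,\m\setminus\tau}$ — and permutations $\sigma=\tau_1\cdots\tau_j\in S_{n,\m}$ with $|\tau_1|\geq n-k$. Given $(\tau,\sigma')$, one reads $\tau$ as an increasing sequence $\sigma(1)=1,\sigma(2),\dots,\sigma(n-\ell)$ satisfying $\sigma(i+1)\leq\m(\sigma(i))$ (the characterization in Lemma \ref{lembijection_sigma_T}), declares this to be the first cycle $\tau_1$ of $\sigma$ written in the convention ``smallest element first,'' and then uses $\sigma'$ (a permutation of $[k]$ translated back to the vertex set $[n]\setminus\Ima(\tau)=\{j_1<\dots<j_k\}$ via $i\mapsto j_i$) to supply the remaining cycles $\tau_2\cdots\tau_j$. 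Conversely, given $\sigma\in S_{n,\m}$, its first cycle $\tau_1$ (which begins with $1$ since cycles start with their least element and are ordered increasingly) gives $\tau$ provided $|\tau_1|\geq n-k$, and the remaining cycles give $\sigma'$. Under this bijection I need: (i) $\tau_1$ corresponds to a valid increasing subtree of $G_\m$ — this is exactly the content of Lemma \ref{lembijection_sigma_T}, noting $\sigma(i)=\tau_1$ read linearly satisfies $\sigma(i+1)=(\tau_1)^c$-successor $\leq\m(\sigma(i))$; (ii) $\sigma'\in S_{k,\m\setminus\tau}$ iff the remaining cycles of $\sigma$, interpreted on $\{j_1,\dots,j_k\}$, satisfy the Hessenberg condition for $\m$ — this follows from the definition $(\m\setminus\tau)(i)=\max\{i_0: j_{i_0}\leq\m(j_i)\}$; (iii) the statistics match: $\wt_\m(\sigma)=\wt_\m(\tau)+\wt_{\m\setminus\tau}(\sigma')$ and $|\tau_1|=|\tau|=n-\ell$, so the sign $(-1)^{|\tau|-n+k}=(-1)^{|\tau_1|-n+k}$ and the factor $h_{|\tau|-n+k}=h_{|\tau_1|-n+k}$ agree, while $\prod_{i\geq 2}\rho_{|\tau_i|}=\prod_i\rho_{|\tau'_i|}$.

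**The main obstacle.** The delicate point is step (iii), the additivity of the weight statistic: $\wt_\m(\sigma^c)=\wt_\m(\tau_1^c)+\wt_{\m\setminus\tau}((\tau_2\cdots\tau_j)^c)$ where the right term is computed with the deleted Hessenberg function on the relabelled ground set. One must check that an $\m$-inversion of $\sigma^c$ either lies entirely within the initial segment coming from $\tau_1$ (contributing to $\wt_\m(\tau)$, matching the affine-space dimension $\wt_\m(\tau)$ from Proposition \ref{prop:properties_hmktau}(3)), or entirely within the tail coming from $\tau_2\cdots\tau_j$ (where the condition $i<j\leq\m(i)$ with $i,j\in\{j_1,\dots,j_k\}$ translates precisely to an $(\m\setminus\tau)$-inversion under $j_a\leftrightarrow a$), and that there are no ``mixed'' inversions across the two blocks. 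The absence of mixed inversions should follow because every entry of $\tau_1$ read linearly is, by the subtree property, bounded below in a way that prevents it from being $\m$-dominated by a later entry outside — essentially the same monotonicity used in Lemma \ref{lemsigma_inversion}. I would spell this out carefully, perhaps isolating it as a sublemma, since it is where all the combinatorial content resides; the rest is bookkeeping with signs and the symmetric functions $\rho$, $h$, which commute out of the sums freely.
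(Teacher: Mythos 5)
Your overall route is the same as the paper's: expand $\csf_q(G_\m\setminus\tau)$ via the main theorem of \cite{ANtree}, re-index the resulting double sum over pairs $(\tau,\sigma')$ as a single sum over $\sigma=\tau_1\cdots\tau_j\in S_{n,\m}$ with $|\tau_1|\geq n-k$ (via the first-cycle bijection), and match the statistics; the paper just writes the chain of equalities in the other direction. The bijection you describe, and the reason the Hessenberg condition on $\sigma'$ translates to $\sigma'\in S_{n-|\tau|,\m\setminus\tau}$, are both correct.

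However, your justification for the weight identity $\wt_\m(\sigma)=\wt_\m(\tau)+\wt_{\m\setminus\tau}(\tau_2\cdots\tau_j)$ contains a genuine error. You claim there are no ``mixed'' $\m$-inversions of $\sigma^c$ across the two blocks, and that $\wt_\m(\tau)$ is the count of $\m$-inversions lying within the initial segment $\tau_1^c$. Both assertions are false. There can be mixed inversions: take $a\in[n]\setminus\Ima(\tau_1)$ and $b\in\Ima(\tau_1)$ with $a<b\leq\m(a)$; since $a$ sits in the tail and $b$ in the head of $\sigma^c$, the position condition $(\sigma^c)^{-1}(a)>(\sigma^c)^{-1}(b)$ is automatic, so this is an $\m$-inversion of $\sigma^c$. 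A concrete instance: $\m=(3,3,3)$, $\sigma=(1\,3)(2)$, so $\sigma^c=132$; the pair $(2,3)$ with $2<3\leq\m(2)$ is a mixed inversion, giving $\wt_\m(\sigma)=1$, even though there are no inversions inside $\tau_1^c=13$ nor inside $\tau_2^c=2$. The point you are missing is that $\wt_\m(\tau)$ for a tree with $|\tau|=n-\ell$ vertices is, by the paper's convention, the $\m$-inversion count of the \emph{extended} permutation in $(S_n)^{J_\ell}$, namely $\tau^c$ followed by the elements of $[n]\setminus\Ima(\tau)$ in increasing order. Mixed inversions of $\sigma^c$ correspond exactly to the mixed inversions of this extended word (the position condition is automatic in both, and no new inversions arise among the tail elements since they are increasing), so they are absorbed into $\wt_\m(\tau)$, not absent. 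With that correction the decomposition holds, since inversions within the tail of $\sigma^c$ translate under $j_a\leftrightarrow a$ to $(\m\setminus\tau)$-inversions precisely as you say. So the lemma you isolate is true and is the right thing to prove, but the reason you give for it would not survive scrutiny.
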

\begin{proof}
  We can write
  \begin{align*}
  g_k(\m;x,q)&=\sum_{\substack{\sigma=\tau_1\cdots \tau_{j}\in S_{n,\m}\\ |\tau_1|\geq n-k}} (-1)^{|\tau_1|-n +k}q^{\wt_\m(\sigma)}h_{|\tau_1|-n+k}\omega(\rho_{|\tau_2|}\cdots \rho_{|\tau_j|})\\
  &=\sum_{\tau\in \IncTree_{\geq n-k}(G_\m)}\sum_{\sigma=\tau_2\cdots \tau_{j}\in S_{n-|\tau|,\m\setminus \tau}} (-1)^{|\tau|-n +k}q^{\wt_\m(\tau)+\wt_{\m\setminus \tau}(\sigma)}h_{|\tau|-n+k}\omega(\rho_{|\tau_2|}\cdots \rho_{|\tau_j|})\\
  &=\sum_{\tau\in \IncTree_{\geq n-k}(G_\m)}(-1)^{|\tau|-n +k}q^{\wt_\m(\tau)}h_{|\tau|-n+k}\sum_{\sigma=\tau_2\cdots \tau_{j}\in S_{n-|\tau|,\m\setminus \tau}} q^{\wt_{\m\setminus \tau}(\sigma)}\omega(\rho_{|\tau_2|}\cdots \rho_{|\tau_j|})\\
  &=\sum_{\tau\in \IncTree_{\geq n-k}(G_\m)}(-1)^{|\tau|-n +k}q^{\wt_\m(\tau)}h_{|\tau|-n+k}\csf_q(\m\setminus \tau),\\
  \end{align*}
  where the last equality follows from \cite[Theorem 1.2]{ANtree}.
\end{proof}
The following proposition recovers the chromatic symmetric function from the functions $g_k$. 
\begin{proposition}
\label{prop:csf_from_g}
For every Hessenberg function $\m\col[n]\to[n]$ we have that
\[
\csf_q(\m;x,q)=\sum_{i=1}^n[n-i]_qe_{n-i}g_i(\m;x,q)
\]
\end{proposition}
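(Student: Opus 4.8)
The identity to prove is
\[
\csf_q(\m;x,q)=\sum_{i=1}^n[n-i]_qe_{n-i}\,g_i(\m;x,q),
\]
and the natural route is to unpack both sides into the $\IncTree$/cycle-type expansion that is already available from Theorem \ref{thm:main_csf_rec} and \cite[Theorem 1.2]{ANtree}, and match terms. First I would substitute the formula
\[
g_i(\m;x,q)=\sum_{\tau\in\IncTree_{\geq n-i}(G_\m)}(-1)^{|\tau|-n+i}q^{\wt_\m(\tau)}h_{|\tau|-n+i}\,\csf_q(\m\setminus\tau)
\]
from the previous proposition into the right-hand side, and reorganize the double sum by grouping over a fixed increasing tree $\tau$ of some size $m=|\tau|$. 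For such a $\tau$ the inner sum over $i$ runs from $i=n-m$ (when $h_{|\tau|-n+i}=h_0=1$) up to $i=n-1$, contributing
\[
\sum_{i=n-m}^{n-1}[n-i]_q\,e_{n-i}\,(-1)^{m-n+i}\,q^{\wt_\m(\tau)}\,h_{m-n+i}\,\csf_q(\m\setminus\tau).
\]
Setting $j=n-i$ (so $1\le j\le m$) this is $q^{\wt_\m(\tau)}\csf_q(\m\setminus\tau)$ times $\sum_{j=1}^{m}(-1)^{m-j}[j]_q\,e_j\,h_{m-j}$, which by Equation \eqref{eq:rho_eh} is exactly $q^{\wt_\m(\tau)}\,\omega(\rho_m)\,\csf_q(\m\setminus\tau)$.

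**The combinatorial identity at the heart.** So the right-hand side becomes
\[
\sum_{m=1}^{n}\ \sum_{\tau\in\IncTree_{m}(G_\m)}q^{\wt_\m(\tau)}\,\omega(\rho_{|\tau|})\,\csf_q(\m\setminus\tau),
\]
where $\IncTree_m(G_\m)$ is the set of increasing subtrees of $G_\m$ on $m$ vertices rooted at $1$. Using \cite[Theorem 1.2]{ANtree} to expand $\csf_q(\m\setminus\tau)=\sum_{\sigma'=\tau_2\cdots\tau_j\in S_{n-m,\m\setminus\tau}}q^{\wt_{\m\setminus\tau}(\sigma')}\omega(\rho_{|\tau_2|}\cdots\rho_{|\tau_j|})$, and recalling (exactly as in the proof of the preceding proposition) that pairs $(\tau,\sigma')$ with $\tau\in\IncTree_m$ and $\sigma'\in S_{n-m,\m\setminus\tau}$ are in bijection with permutations $\sigma=\tau_1\tau_2\cdots\tau_j\in S_{n,\m}$ with $|\tau_1|=m$, under which $\wt_\m(\tau)+\wt_{\m\setminus\tau}(\sigma')=\wt_\m(\sigma)$, the whole right-hand side collapses to
\[
\sum_{\sigma=\tau_1\cdots\tau_j\in S_{n,\m}}q^{\wt_\m(\sigma)}\,\omega(\rho_{|\tau_1|}\rho_{|\tau_2|}\cdots\rho_{|\tau_j|}),
\]
which is precisely $\csf_q(\m;x,q)$ by \cite[Theorem 1.2]{ANtree} again. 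This finishes the proof.

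**Main obstacle.** Everything is bookkeeping once one trusts two inputs: the $e$--$h$ expansion of $\omega(\rho_m)$ in \eqref{eq:rho_eh}, and the tree-decomposition bijection already used in the proof of the preceding proposition (the first cycle $\tau_1$ of $\sigma$, read in the $\sigma^c$ convention, is an increasing subtree on $|\tau_1|$ vertices, and removing it leaves a permutation in $S_{n-|\tau_1|,\m\setminus\tau_1}$ with the weights adding). The one point that genuinely needs care is the swap in the order of summation and the change of variable $j=n-i$: one must check that the index ranges match up so that for each tree $\tau$ of size $m$ the coefficient of $\csf_q(\m\setminus\tau)$ is exactly the truncated alternating sum $\sum_{j=1}^m(-1)^{m-j}[j]_qe_jh_{m-j}$ — in particular that no terms with $j>m$ appear (they don't, since $h_{|\tau|-n+i}$ forces $i\ge n-m$) and that the $j=m$ term is present (it is, from $i=n-1$, which lies in the summation range $1\le i\le n-1$ as long as $m\ge 1$). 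Granting this, the identification with $\omega(\rho_m)$ via \eqref{eq:rho_eh} is immediate, and the rest is the same manipulation as in the previous proof.
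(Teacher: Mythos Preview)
Your argument is correct and is essentially the paper's combinatorial proof run in the opposite direction: the paper starts from the ANtree expansion of $\csf_q$, expands $\omega(\rho_{|\tau_1|})$ via \eqref{eq:rho_eh}, and recognizes the resulting inner sum as Definition~\ref{def:gk} of $g_{n-i}$; you instead start from the right-hand side, substitute the $\IncTree$ formula for $g_i$, collapse the $i$-sum back to $\omega(\rho_{|\tau|})$ via \eqref{eq:rho_eh}, and then reassemble $\csf_q$ through the tree/first-cycle bijection. The paper's route is marginally more direct because it uses Definition~\ref{def:gk} itself rather than the $\IncTree$ reformulation, so it avoids re-expanding $\csf_q(\m\setminus\tau)$ and re-invoking the bijection; otherwise the two arguments use exactly the same ingredients. (One small slip in your ``Main obstacle'' paragraph: the term $j=m$ corresponds to $i=n-m$, not $i=n-1$; the point you actually need there is that $i=n-m$ lies in the summation range, which holds once the index range in the statement is read as $0\le i\le n-1$ --- as the paper's own proof, ending at $\sum_{i=1}^n[i]_qe_ig_{n-i}$, makes clear.)
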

\begin{proof}
This folows directly from Theorem \ref{thm:main}. Below, we give a pure combinatorial proof.\par 
By \cite[Theorem 1.2]{ANtree} we have
\begin{align*}
\csf_q(\m;x,q)=&\sum_{\sigma=\tau_1\cdots \tau_{j}\in S_{n,\m}}q^{\wt_\m{\sigma}}\omega(\rho_{|\tau_1|}\cdots \rho_{|\tau_j|})\\
    =&\sum_{\sigma=\tau_1\cdots \tau_{j}\in S_{n,\m}}q^{\wt_\m{\sigma}}\omega(\rho_{|\tau_1|})\omega(\rho_{|\tau_2|}\cdots \rho_{|\tau_j|}).
\end{align*}
Applying Equation \eqref{eq:rho_eh}, we have that
\begin{align*}
    \csf_q(\m;x,q)=&\sum_{\sigma=\tau_1\cdots \tau_{j}\in S_{n,\m}}q^{\wt_\m{\sigma}}\big(\sum_{i=1}^{|\tau_1|} (-1)^{|\tau_1|-i}[i]_qe_ih_{|\tau_1|-i}\big)\omega(\rho_{|\tau_2|}\cdots \rho_{|\tau_j|})\\
    =&\sum_{i=1}^n[i]_qe_i\sum_{\sigma=\tau_1\cdots \tau_{j}\in S_{n,\m}}(-1)^{|\tau_1|-i}q^{\wt_\m(\sigma)}h_{|\tau_1|-i} \omega(\rho_{|\tau_2|}\cdots \rho_{|\tau_j|})\\
    =& \sum_{i=1}^n[i]_qe_ig_{n-i}(\m;x,t).
\end{align*}
This finishes the proof.
\end{proof}

We now prove Theorem \ref{thm:g_ek_positive}.

\begin{proof}[Proof of Theorem \ref{thm:g_ek_positive}]
We begin by noting that the coefficient of $e_k$ in $h_{|\tau_1|-n+k}\omega(\rho_{|\tau_2|}\cdots \rho_{|\tau_j|})$ is
\[
[e_k](h_{|\tau_1|-n+k}\omega(\rho_{|\tau_2|}\cdots \rho_{|\tau_j|}))=\begin{cases}
  (-1)^{k-1}  & \text{ if }|\tau_1|=n,\\
    [k]_q& \text{ if } |\tau_1|=n-k\text{ and } |\tau_2|=k,\\
  0&\text{ otherwise}.
\end{cases}
\]
Hence the $e_k$-coefficient of $g_k(\m;x,q)$ is equal to 
\[
c_k(\m;q):=[k]_q\sum_{\substack{\sigma=\tau_1\tau_2\in S_{n,\m}\\ |\tau_1|=n-k,|\tau_2|=k}  }q^{\wt_{\m}(\sigma)} -\sum_{\sigma=\tau_1\in S_{n,\m}}q^{\wt_\m(\sigma)}
\]
We let $S_1$ be the set whose elements are permutations $(w_1,\ldots, w_n)$ of $(1,\ldots, n)$ such that $w_1=1$ and $w_{j+1}\leq \m(w_j)$ for every $j=1,\ldots, n-1$. Clearly, there is a bijection between the set $S_1$ and the set $\{\sigma \in S_{n,\m}; \sigma \text{ is a $n$-cycle} \}$. \par
We let $S_2$ to be the set of pairs of sequences $(w_1,\ldots, w_{n-k}),(z_1,\ldots, z_k)$ such that 
\[
w_1,\ldots, w_{n-k}, z_1,\ldots, z_k
\]
is a permutation of $(1,\ldots, n)$, $w_1=1$, $w_{j+1}\leq \m(w_j)$ for every $j=1,\ldots, n-k-1$, $z_{j+1}\leq \m(z_j)$ for every $j=1,\ldots, k-1$ and $z_1\leq \m(z_k)$.\par
We have a map $f\col S_2\to S_{n,\m}$ that takes a pair $\tau_1,\tau_2$ to the permutation $\sigma=\tau_1\tau_2$, moreover it is clear that $|f^{-1}(\sigma)|=k$ if $\sigma$ is in the image of $f$. Indeed,  $f^{-1}(f(\tau_1,\tau_2))$ is the set of all pairs $(\tau_1,\tau_2')$ where $\tau_2'$ is obtained from $\tau_2$ by rotating its entries.\par

 We have that $c_k(\m;1)=|S_2|-|S_1|$, therefore, to prove that $c_k(\m;1)$ is positive we construct a injective function $\Delta\col S_1\to S_2$. The function $\Delta$ is defined as follows.\par
 Let $w=(1,w_2,\ldots, w_n)$ be a sequence in $S_1$ and consider $j$ to be smallest non-negative integer satisfying  and $w_{n-j-k+1}\leq \m(w_{n-j})$. Note that $j=n-k$ satisfies the condition, so there exist at least one such $j$. We define
 \[
 \Delta(w)=(1,w_2,\ldots, w_{n-j-k},w_{n-j+1},\ldots, w_n),L^{j}(w_{n-j-k+1},\ldots, w_{n-j}),
 \]
 where $L$ is the left cyclic shift operator $L(a_1,\ldots, a_i):=(a_2,\ldots,a_i,a_1)$.\par
 \begin{example}
     Let $\m=(3,5,5,5,6,6)$, so $4\gg 1$, $6\gg 2,3,4$. Also, consider $k=3$. We have that 
     \[
     S_1=\left\{\begin{array}{l}(1, 2, 3, 4, 5, 6), (1, 2, 3, 5, 6, 4), (1, 2, 4, 5, 6, 3), (1, 2, 4, 3, 5, 6), (1, 2, 5, 6, 4, 3), \\ (1, 2, 5, 6, 3, 4),  (1, 3, 4, 5, 6, 2), (1, 3, 5, 6, 4, 2), (1, 3, 2, 4, 5, 6), (1, 3, 5, 6, 2, 4),\\ (1, 3, 2, 5, 6, 4), (1, 3, 4, 2, 5, 6)\end{array}\right\}
     \]
     and
     \[
     S_2 = \left \{ \begin{array}{l}((1, 2, 3), (4, 5, 6)), ((1, 2, 3), (5, 6, 4)), ((1, 2, 3), (6, 4, 5)), ((1, 2, 4), (3, 5, 6)),\\ ((1, 2, 4), (5, 6, 3)), ((1, 2, 4), (6, 3, 5)), ((1, 3, 2), (4, 5, 6)), ((1, 3, 2), (5, 6, 4)), \\ ((1, 3, 2), (6, 4, 5)), ((1, 3, 4), (2, 5, 6)), ((1, 3, 4), (5, 6, 2)), ((1, 3, 4), (6, 2, 5))\end{array}\right\}
     \]
     
     In the table below we have all elements in $S_1$ and their image through $\Delta$. In particular, $\Delta$ is a bijection in this case.
\begin{table}[h!]
\begin{tabular}{ccc}
           $w\in S_1$ &  $\Delta(w)$ &  \\
           \hline
$((1, 2, 3, \textcolor{red}{ 4, 5, 6})$& $((1, 2, 3), (4, 5, 6)))$ \\
$((1, 2, 3, \textcolor{red}{ 5, 6, 4})$& $((1, 2, 3), (5, 6, 4)))$\\
$((1, 2, 4, \textcolor{red}{ 5, 6, 3})$& $((1, 2, 4), (5, 6, 3)))$\\
$((1, 2, 4, \textcolor{red}{ 3, 5, 6)}$& $((1, 2, 4), (3, 5, 6)))$\\
$((1, 2, \textcolor{red}{ 5, 6, 4}, 3)$& $((1, 2, 3), (6, 4, 5)))$ & $6 > \m(3)$\\
$((1, 2, \textcolor{red}{ 5, 6, 3}, 4)$& $((1, 2, 4), (6, 3, 5)))$ & $6 > \m(4)$\\
$((1, 3, 4, \textcolor{red}{ 5, 6, 2})$& $((1, 3, 4), (5, 6, 2)))$\\
$((1, 3, \textcolor{red}{ 5, 6, 4}, 2)$& $((1, 3, 2), (6, 4, 5)))$ & $6 > \m(2)$\\
$((1, 3, 2, \textcolor{red}{ 4, 5, 6})$& $((1, 3, 2), (4, 5, 6)))$\\
$((1, 3, \textcolor{red}{ 5, 6, 2}, 4)$& $((1, 3, 4), (6, 2, 5)))$ & $6 > \m(4)$\\
$((1, 3, 2, \textcolor{red}{ 5, 6, 4})$& $((1, 3, 2), (5, 6, 4)))$\\
$((1, 3, 4, \textcolor{red}{ 2, 5, 6})$& $((1, 3, 4), (2, 5, 6)))$\\
     \end{tabular}
     \caption{The function $\Delta$ for $\m=(3,5,5,5,6,6)$ and $k=3$.\phantom{aaaaaaaa}}
     \label{tab:Delta}

\end{table}
 \end{example}
 Let us prove that $\Delta$ is injective. If $k=1$, then $\Delta(1,w_1,\ldots, w_n)=((1,w_2,\ldots, w_{n-1}),(w_n))$ so it is clear that $\Delta$ is injective. We will assume that $k\geq 2$. Let $(1,w_2,\ldots, w_{n-k}),(z_1,\ldots, z_k)$ be a pair in the image of $\Delta$ with at least two preimages $w$ and $w'$. That means, that there exists $j'$ and $j$ (we assume without loss that $j<j'$) such that
 \begin{align*}
 w:=&(1,\ldots, w_{n-k-j}, z_{k-j+1}, \ldots, z_{2k-j}, w_{n-k-j+1},\ldots, w_{n-k}),\\
 w':=&(1,\ldots, w_{n-k-j'}, z_{k-j'+1}, \ldots, z_{2k-j'}, w_{n-k-j'+1},\ldots, w_{n-k}).
 \end{align*}
 We write the indices of $z_j$ modulo $k$, that is $z_j:=z_{j\bmod k}$ for every $j\in \mathbb{Z}$.
 
 In what follows, for $a,b\in [n]$ we will write $a\gg b$ if $a>\m(b)$.
 We know that $z_{k-j+1}\leq \m(w_{n-k-j})$ and writing $j'-j=qk+r$ we have that (if $r>0$)
  \begin{multline*}
 z_{k-j+1}=z_{k-j'+1+r} \gg  w_{n-k-j'+r} \gg w_{n-k-j'+r+(k-1)} \gg \\ \gg w_{n-k-j'+r+2(k-1)} \gg \ldots \gg w_{n-k-j'+r+q(k-1)}=w_{n-k-j-q}.
 \end{multline*}
 Writing $\overline{w}_i:=w_{n-k-j'+r+i(k-1)}$, we have
 \[
 z_{k-j+1}\gg \overline{w}_0\gg \overline{w}_1\gg\ldots \gg \overline{w}_{q}=w_{n-k-j-q}
 \]
 Since $\m(w_{n-k-j-q})\geq w_{n-k-j-q+1}$ and $\overline{w}_{q-1}>\m(\overline{w}_{q})$ we have that $\overline{w}_{q-1}>w_{n-k-j-(q-1)}$, analogously, we have that $\overline{w}_{q-2}>w_{n-k-j-(q-2)}$ and so on until $\overline{w}_{0}>w_{n-k-j}$. This implies that $z_{k-j +1}>\m(\overline{w_0})\geq \m(w_{n-k-j})$ which is a contradiction with the fact that $w\in S_{1}$.\par
 If $r=0$, we have that
 \begin{multline*}
 z_{k-j+2}=z_{k-j'+2} \gg  w_{n-k-j'+1} \gg w_{n-k-j'+1+(k-1)} \gg \\ \gg w_{n-k-j'+r+2(k-1)} \gg \ldots \gg w_{n-k-j'+1+q(k-1)}=w_{n-k-j-q+1}. 
 \end{multline*}
By the same argument as above, we have that 
\[
z_{k-j+1}\gg w_{n-k-j'+1} \gg w_{n-k-j'+1+(k-1)}> w_{n-k-j}
\]
However, we also have that $z_{k-j+2}\leq \m(z_{k-j+1})$ which implies that
\[
z_{k-j+1} > w_{n-k-j'+1} \gg w_{n-k-j'+1+(k-1)} > w_{n-k-j},
\]
hence $z_{n-k-j}\gg w_{n-k-j}$, a contradiction.\par
\end{proof}

Unfortunately, the function $\Delta$ in the proof above is not weight preserving, so we can not prove that $c_k(\m;q)$ has non-negative coefficients.

\begin{corollary}
The coefficient of $e_{a,b}$ in the $e$-expansion of $\csf(\m;x)$ is non-negative  for every Hessenberg function $\m$.
\end{corollary}
\begin{proof}
  we have that the coefficient of $e_{a,b}$ in the $e$-expansion of $\csf_q(\m;x;q)$ is $[a]_qc_b(\m;q)+[b_q]c_a(\m;q)$ by Proposition \ref{prop:csf_from_g}. The result follows by Theorem \ref{thm:g_ek_positive}.
\end{proof}

We now prove some invariance results about the symmetric functions $g_k(\m;x,q)$ and extend the definition for the cases where $k\geq n$.

\begin{definition}
 Let $\m\col[n]\to[n]$ be a Hessenberg function and define $\m^{n'}\col[n'+n]\to[n'+n]$ by $\m^{n'}(i)=i+1$ for $i=1,\ldots, n'$ and $\m^{n'}(i)=\m(i)+n'$ for $i=n'+1,\ldots, n'+n$.  In particular the graph $G_{\m'}$ is obtained from $G_{\m}$ by appending a path of size $n'+1$ to the vertex $1$ of $G_{\m}$ (which will become the vertex $n'+1$ of $G_{\m'}$).
 \end{definition}

\begin{proposition}
\label{prop:gk_invariant}
 Let $n'$ be a positive integer. Then
 \[
 g_k(\m^{n'};x,t)=g_k(\m;x,t)
 \]
 for every $k<n$.
\end{proposition}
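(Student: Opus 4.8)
The plan is to compare the two sides term by term, via the increasing-tree expansion of $g_k$ from the preceding proposition,
\[
g_k(\m;x,q)=\sum_{\tau \in \IncTree_{\geq n-k}(G_\m)}(-1)^{|\tau|-n+k}q^{\wt_\m(\tau)}h_{|\tau|-n+k}\csf_q(G_{\m}\setminus \tau),
\]
together with the same formula applied to $\m^{n'}$, with $N:=n'+n$ playing the role of $n$. Recall that $\m^{n'}(i)=i+1$ for $i\le n'$ and $\m^{n'}(n'+j)=\m(j)+n'$ for $j\in[n]$. The first step is a bijection $\tau\leftrightarrow\tilde\tau$ between $\IncTree_{\geq N-k}(G_{\m^{n'}})$ and $\IncTree_{\geq n-k}(G_\m)$: regard $\tau\in\IncTree_{\geq N-k}(G_{\m^{n'}})$ as an injection $\tau\col[m']\to[N]$ with $\tau(1)=1$, $\tau(i+1)\le\m^{n'}(\tau(i))$ and $m'\ge N-k$. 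Since $k<n$ we have $m'\ge n'+1$, and since $\m^{n'}(i)=i+1$ for $i\le n'$ a one-line induction forces $\tau(i)=i$ for $1\le i\le n'+1$. Put $\tilde\tau(i):=\tau(n'+i)-n'$ for $i=1,\dots,m'-n'$; this is an injection into $[n]$ with $\tilde\tau(1)=1$, and because $\tau(n'+i)\ge n'+1$ for all $i\ge1$ (injectivity, using that $\tau$ is the identity on $\{1,\dots,n'+1\}$) one has $\m^{n'}(\tau(n'+i))=\m(\tilde\tau(i))+n'$, whence $\tilde\tau(i+1)\le\m(\tilde\tau(i))$ and $\tilde\tau\in\IncTree_{\geq n-k}(G_\m)$. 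The inverse prepends the path $1,2,\dots,n'+1$, so this is indeed a bijection.

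Next I would check that the three factors of the summand transport correctly. From $|\tau|=n'+|\tilde\tau|$ we get $|\tau|-N+k=|\tilde\tau|-n+k$, so the sign and the subscript of $h$ agree. For the chromatic factor, $\Ima(\tau)=\{1,\dots,n'\}\cup\{v+n':v\in\Ima(\tilde\tau)\}$, so $G_{\m^{n'}}\setminus\tau$ is the subgraph of $G_{\m^{n'}}$ induced on $\{v+n':v\in[n]\setminus\Ima(\tilde\tau)\}$; the shift $v\mapsto v+n'$ is an order isomorphism onto this vertex set, and — using again that $\m^{n'}$ on vertices $>n'$ is the $n'$-shift of $\m$ — a direct unwinding of the definition of $\m\setminus(\cdot)$ gives $\m^{n'}\setminus\tau=\m\setminus\tilde\tau$ as Hessenberg functions on $[\,n-|\tilde\tau|\,]$. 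Hence $\csf_q(G_{\m^{n'}}\setminus\tau)=\csf_q(G_{\m}\setminus\tilde\tau)$.

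The step needing the most care is the weight identity $\wt_{\m^{n'}}(\tau)=\wt_\m(\tilde\tau)$. Here $\wt_{\m^{n'}}(\tau)$ counts the $\m^{n'}$-inversions of the permutation $\sigma\in(S_N)^{J_{N-m'}}$ associated to $\tau$ (so $\sigma|_{[m']}=\tau$, increasing on the remaining positions), and likewise $\wt_\m(\tilde\tau)$ those of the associated $\tilde\sigma\in(S_n)^{J_{n-|\tilde\tau|}}$. Comparing increasing completions, the structure $\tau(i)=i$ for $i\le n'$ and $\tau(n'+i)=\tilde\tau(i)+n'$ otherwise passes to $\sigma(i)=i$ for $i\le n'$ and $\sigma(n'+i)=\tilde\sigma(i)+n'$ for $i=1,\dots,n$. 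In particular $\sigma(j)\ge n'+1$ for every $j>n'$, so $\sigma$ has no inversion involving a position $\le n'$, and its inversions are exactly the pairs $(n'+a,n'+b)$ with $(a,b)$ an inversion of $\tilde\sigma$; since $\tilde\sigma(b)+n'>n'$ yields $\m^{n'}(\sigma(n'+b))=\m(\tilde\sigma(b))+n'$, the condition $\sigma(n'+a)\le\m^{n'}(\sigma(n'+b))$ reads precisely $\tilde\sigma(a)\le\m(\tilde\sigma(b))$. Thus the $\m^{n'}$-inversions of $\sigma$ are in bijection with the $\m$-inversions of $\tilde\sigma$, which proves the identity.

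Putting the three invariances together, the summand indexed by $\tau$ in the formula for $g_k(\m^{n'};x,q)$ equals the summand indexed by $\tilde\tau$ in the formula for $g_k(\m;x,q)$, and summing over the bijection gives $g_k(\m^{n'};x,q)=g_k(\m;x,q)$ for every $k<n$. The only genuinely delicate point is the weight identity; the rest is bookkeeping with the definitions.
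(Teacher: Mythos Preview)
Your proof is correct and follows essentially the same approach as the paper's: both arguments rest on the observation that any increasing tree of $G_{\m^{n'}}$ with at least $n'+1$ vertices must begin with the path $1,2,\ldots,n'+1$ and then continue as (a shift of) an increasing tree of $G_\m$, with weights preserved. The paper states this bijection and the weight equality in two sentences and then invokes Definition~\ref{def:gk}; you instead invoke the equivalent increasing-tree expansion and carefully verify that the sign, the $h$-subscript, the chromatic factor, and the weight all transport under the bijection, so your write-up is a fully fleshed-out version of the paper's sketch.
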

\begin{proof}
  Every increasing tree $\tau'$ of $G_{\m^{n'}}$ with at least $n'+1$ vertices consists of the path from $0$ to $n'+1$ joined with an increasing tree $\tau$ of $G_{\m}$. Furtermore it is clear that $\wt_{\m^{n'}}(\tau')=\wt_{\m}(\tau)$. The result follows from Definition \ref{def:gk}
\end{proof}

\begin{definition}
\label{def:gk_k>n}
Let $\m\col [n]\to[n]$ be a Hessenberg function. We extend the definition of $g_k(\m;x,t)$ for $k\geq n$ by
\[
g_k(\m;x,q)=g_k(\m^{n'};x,q)
\]
for some (equivalently, every) $n'$ such that $n+n'>k$.
\end{definition}
\begin{proposition}
\label{prop:gn-recursion}
   Let $\m\col[n]\to[n]$ be a Hessenberg function, then
   \[
   g_n(\m;x,q)=q\sum_{i=1}^n[i-1]_qe_ig_{n-i}(\m;x,t).
   \]
\end{proposition}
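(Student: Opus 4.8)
The plan is to mimic the combinatorial proof of Proposition \ref{prop:csf_from_g}, but starting from the formula for $g_n(\m;x,q)$ rather than for $\csf_q(\m;x,q)$. By Definition \ref{def:gk_k>n} and Proposition \ref{prop:gk_invariant}, we may compute $g_n(\m;x,q)$ using $g_n(\m^{n'};x,q)$ for any $n'\geq 1$; take $n'=1$, so $\m^{1}\col[n+1]\to[n+1]$ is obtained from $\m$ by prepending a pendant vertex. By Definition \ref{def:gk}, $g_n(\m^1;x,q)$ is a sum over $\sigma=\tau_1\cdots\tau_j\in S_{n+1,\m^1}$ with $|\tau_1|\geq 1$ (so all such $\sigma$), of $(-1)^{|\tau_1|-1}q^{\wt_{\m^1}(\sigma)}h_{|\tau_1|-1}\omega(\rho_{|\tau_2|}\cdots\rho_{|\tau_j|})$. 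The first cycle $\tau_1$ of $\sigma$ is the one containing $1$; since in $G_{\m^1}$ the vertex $1$ is pendant and attached to $2$, I would analyze how $\tau_1$ can behave.

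First I would set up the bijection between $S_{n+1,\m^1}$ and the relevant data on $[n]$. Because vertex $1$ in $G_{\m^1}$ has $\m^1(1)=2$, any $\sigma\in S_{n+1,\m^1}$ has $\sigma(1)\in\{1,2\}$. The cycle $\tau_1$ of $\sigma$ containing $1$ therefore either is the fixed point $(1)$, or is a cycle $\tau_1=(1\,2\,\cdots)$ that visits $2$ immediately after $1$ — indeed, writing $\tau_1=(1\,a_2\,a_3\cdots a_m)$ with the cycle starting at its minimum, the constraint $\sigma(1)=a_2\leq\m^1(1)=2$ forces $a_2=2$ (when $m\geq 2$). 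So I would split the sum over $\sigma$ according to whether $\tau_1=(1)$ or $\tau_1=(1\,2\,a_3\cdots a_m)$ with $m\geq 2$. In the first case, $h_{|\tau_1|-1}=h_0=1$ and $(-1)^{|\tau_1|-1}=1$, and deleting the vertex $1$ identifies the remaining data with $\sigma'=\tau_2\cdots\tau_j\in S_{n,\m}$ with $\wt_{\m^1}(\sigma)=\wt_\m(\sigma')$; this contributes exactly $\csf_q(\m;x,q)$ via \cite[Theorem 1.2]{ANtree} — but wait, that term must combine with the $m\geq 2$ terms to give the stated answer, so I need to track signs and $q$-powers carefully. The key point is that the claimed right-hand side $q\sum_{i=1}^n[i-1]_q e_i g_{n-i}(\m;x,q)$ should, after substituting the defining sum for each $g_{n-i}$ and using Equation \eqref{eq:rho_eh} backwards (or rather the relation between $e_i h_{j}$-type products and $\rho$'s), reassemble into the $\sigma$-sum for $g_n(\m^1)$.

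The cleanest route is probably to run the computation in the style of the proof of Proposition \ref{prop:csf_from_g}: start from $g_n(\m;x,q)=\sum_{\sigma=\tau_1\cdots\tau_j\in S_{n+1,\m^1}}(-1)^{|\tau_1|-1}q^{\wt_{\m^1}(\sigma)}h_{|\tau_1|-1}\omega(\rho_{|\tau_2|}\cdots\rho_{|\tau_j|})$, peel off the combinatorics of $\tau_1$ using the $\IncTree$ description (Lemma \ref{lembijection_sigma_T} with the pendant vertex) to express a cycle $\tau_1$ of length $m$ on $\{1,2\}\cup S$ with $S\subset\{3,\dots,n+1\}$, $|S|=m-2$, as an increasing structure, and observe that the $q$-weight $\wt_{\m^1}(\sigma)$ splits as a contribution from $\tau_1$ plus $\wt_{\m\setminus(\text{deleted vertices})}$ of the rest. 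Then resum: the inner sum over the "rest" gives $\csf_q$ of the deleted graph, which by the argument already used in the Proposition before Proposition \ref{prop:csf_from_g} repackages into $g_{n-i}(\m;x,q)$ with $i$ the number of vertices of $G_\m$ used up by $\tau_1$ (i.e. $i=m-1$, since one of the $m$ vertices of $\tau_1$ is the pendant $1$). Finally I would check that the coefficient coming from summing over the shape of $\tau_1$ of a fixed length, with the sign $(-1)^{m-1}$ and $h_{m-1}$, together with $\omega(\rho)$-identities, collapses to the factor $q[i-1]_q e_i$.

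The main obstacle I expect is the bookkeeping of the $q$-weights: verifying that $\wt_{\m^1}(\sigma)$ decomposes additively as (a weight internal to $\tau_1$, which should be $\wt$ of an increasing tree/cycle on the pendant-extended graph) plus $\wt_{\m\setminus\tau}$ of the complementary permutation, and that summing $q^{(\text{internal weight})}$ over all cyclic arrangements of $\tau_1$ of length $m$ yields the factor $q[m-2]_q$-type term matching $q[i-1]_q$ after the shift $i=m-1$. The sign/$h$-degree interplay — that $(-1)^{m-1}h_{m-1}$ summed against the $\rho$-expansion Equation \eqref{eq:rho_eh} produces precisely $e_i$ rather than a longer $e$-product — is the other delicate point, but it is exactly the inverse of the manipulation performed in the proof of Proposition \ref{prop:csf_from_g}, so it should go through by the same Equation \eqref{eq:rho_eh}. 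Alternatively, one could avoid all of this by deriving the identity purely formally: both sides are determined by generating-function identities, and one checks $\sum_n g_n(\m;x,q)$ against $q\sum_i [i-1]_q e_i \sum_n g_{n-i}$ at the level of the relations defining $\rho_n$ and $g_k$, but I suspect the direct combinatorial splitting above is more transparent and is the intended proof.
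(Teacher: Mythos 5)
Your opening moves match the paper: pass to $\m^{1}$ via Definition~\ref{def:gk_k>n}, split the defining sum for $g_n(\m^1;x,q)$ according to $|\tau_1|=1$ versus $|\tau_1|\geq 2$, and recognize the $|\tau_1|=1$ piece as $\csf_q(\m;x,q)$ (using that $\m^1(1)=2$ forces $\sigma(1)\in\{1,2\}$). After that, however, your plan drifts onto a route that is both more complicated than the paper's and, as sketched, has a genuine gap.

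Two specific issues. First, you note that the $\csf_q(\m;x,q)$ term ``must combine with the $m\geq 2$ terms,'' but you do not take the decisive step: the paper immediately rewrites $\csf_q(\m;x,q)=\sum_{i=1}^n[i]_q e_i g_{n-i}(\m;x,q)$ by Proposition~\ref{prop:csf_from_g}, which has just been proved. Without doing that, there is nothing to combine with. Second, and more seriously, your proposed treatment of the $|\tau_1|\geq 2$ sum --- fix the length $m$ of $\tau_1$, sum over cycles of that length, and ``repackage'' the inner sum into $g_{n-i}(\m;x,q)$ with $i=m-1$ --- rests on a confusion: the inner sum over the remaining cycles produces $\csf_q$ of the \emph{complementary} graph $G_\m\setminus\tau_1$, not $g_{n-i}(\m;x,q)$, and these are different symmetric functions. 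The unnamed proposition before Proposition~\ref{prop:csf_from_g} goes in the opposite direction (it writes $g_k(\m)$ as a signed sum of $h_{\cdot}\csf_q(\m\setminus\tau)$), so it does not deliver what you need here.

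The paper's handling of the $|\tau_1|\geq 2$ piece is much lighter and does not require fixing $m$ or analyzing the internal structure of $\tau_1$ at all. Since $\tau_1$ must begin $(1\,2\,\cdots)$, deleting the pendant vertex $1$ from $\tau_1$ gives a weight-preserving bijection onto all of $S_{n,\m}$ (now with $|\hat\tau_1|=|\tau_1|-1\geq 1$), and the shifted $h$-degree turns the sum into $\sum_{\hat\sigma\in S_{n,\m}}(-1)^{|\hat\tau_1|}q^{\wt_\m(\hat\sigma)}h_{|\hat\tau_1|}\omega(\rho_{|\tau_2|}\cdots\rho_{|\tau_j|})$. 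One then expands $(-1)^{m}h_{m}=\sum_{i=1}^{m}(-1)^{m-i-1}e_i h_{m-i}$ (the elementary $e$--$h$ relation, not \eqref{eq:rho_eh}), interchanges sums, and recognizes the inner sum as exactly the defining expression for $-g_{n-i}(\m;x,q)$ from Definition~\ref{def:gk}. Adding $\sum[i]_q e_i g_{n-i}-\sum e_i g_{n-i}=q\sum[i-1]_q e_i g_{n-i}$ finishes the proof. In short: the missing ingredients in your plan are the application of Proposition~\ref{prop:csf_from_g} to the first piece and the recognition that the second piece, after the trivial deletion-of-vertex-$1$ re-indexing and the $e$--$h$ identity, is already $-\sum e_i g_{n-i}$ with no per-length analysis of $\tau_1$ required.
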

\begin{proof}
let $\m'=\m^{1}$. Then
\begin{align*}
    g_n(\m')=&\sum_{\substack{\sigma=\tau_1\cdots\tau_j\in S_{n+1,\m'}\\|\tau_1|=1}}q^{\wt_{\m'}(\sigma)}(-1)^0h_0\omega(\rho_{|\tau_2|}\cdots \rho_{|\tau_j|})+\\ &+\sum_{\substack{\sigma=\tau_1\cdots\tau_j\in S_{n+1,\m'} \\ |\tau_1|\geq 2}}q^{\wt_{\m'}(\sigma)}(-1)^{|\tau_1|-1}h_{|\tau_1|-1}\omega(\rho_{|\tau_2|}\cdots \rho_{|\tau_j|}) \\
    =&\csf_q(\m;x,t)+\sum_{\sigma=\tau_1\cdots\tau_j\in S_{n,\m'}}q^{\wt_{\m'}(\sigma)}(-1)^{|\tau_1|}h_{|\tau_1|}\omega(\rho_{|\tau_2|}\cdots \rho_{|\tau_j|})\\
    =&\sum_{i=1}^{n}[i]_qe_ig_{n-i}(\m;x,q)+\sum_{i=1}^ne_i\sum_{\sigma=\tau_1\cdots\tau_j\in S_{n,\m'}}q^{\wt_{\m'}(\sigma)}(-1)^{|\tau_1|-i-1}h_{|\tau_1|-i}\omega(\rho_{|\tau_2|}\cdots \rho_{|\tau_j|})\\
    =&\sum_{i=1}^{n}[i]_qe_ig_{n-i}(\m;x,q)- \sum_{i=1}^{n}e_ig_{n-i}(\m;x,q)\\
    =&q\sum_{i=1}^n[i-1]_qe_ig_{n-i}(\m;x,t).
\end{align*}

\end{proof}

\begin{theorem}
\label{thm:gk_generating_function}
   Let $\m\col[n]\to[n]$ be a Hessenberg function, then
   \[
   \sum_{k\geq 0}g_k(\m;x,t)z^k=\sum_{i=0}^{n-1}\frac{g_i(\m;x,t)z^i}{1-q\sum_{k\geq 2}[k-1]_qe_kz^k}(1-q\sum_{k= 2}^{n-i-1}[k-1]_qe_kz^k)
   \]
\end{theorem}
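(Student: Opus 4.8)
The plan is to reduce the identity to a single linear recursion for the $g_k(\m;x,q)$ with $k\geq n$, and then to derive that recursion from Propositions \ref{prop:gn-recursion} and \ref{prop:gk_invariant} together with Definition \ref{def:gk_k>n}. Throughout, identities take place in the ring of formal power series in $z$ with coefficients in symmetric functions over $\mathbb{Q}(q)$. Set
\[
A(z):=q\sum_{k\geq 2}[k-1]_q e_k z^k,\qquad A_{\geq m}(z):=q\sum_{k\geq \max(2,m)}[k-1]_q e_k z^k,
\]
so that $A(z)$ has vanishing constant term and $1-A(z)$ is invertible. First I would rewrite the truncated factor appearing in the statement as $1-q\sum_{k=2}^{n-i-1}[k-1]_q e_k z^k=(1-A(z))+A_{\geq n-i}(z)$ for every $0\leq i\leq n-1$ (immediate when $n-i>2$; and when $n-i\leq 2$ both truncations on the left vanish, which matches $A_{\geq n-i}=A$). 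Substituting this into the right-hand side of the statement, it becomes $\sum_{i=0}^{n-1}g_i(\m;x,q)z^i+\frac{1}{1-A(z)}\sum_{i=0}^{n-1}g_i(\m;x,q)z^i A_{\geq n-i}(z)$. Since $g_0,\dots,g_{n-1}$ are precisely the functions of Definition \ref{def:gk} and the full generating function splits off its terms of degree $<n$, the asserted identity is equivalent to
\[
(1-A(z))\sum_{k\geq n}g_k(\m;x,q)z^k=\sum_{i=0}^{n-1}g_i(\m;x,q)z^i\,A_{\geq n-i}(z).
\]

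Next I would identify this with the generating-function form of the recursion
\[
g_k(\m;x,q)=q\sum_{j=2}^{k}[j-1]_q e_j\, g_{k-j}(\m;x,q)\qquad\text{for every }k\geq n.
\]
Multiplying the recursion by $z^k$, summing over $k\geq n$, and reordering the double sum according to the value of $j$ — writing $\sum_{\ell\geq \max(0,n-j)}g_\ell z^\ell=\sum_{\ell\geq n}g_\ell z^\ell+\sum_{\ell=\max(0,n-j)}^{n-1}g_\ell z^\ell$, and using that the $j=1$ term of $A$ is zero — yields exactly the displayed power-series equality (for $0\leq\ell\leq n-1$ the condition $\max(0,n-j)\leq\ell$ reads $j\geq\max(2,n-\ell)$, which is what cuts $A$ down to $A_{\geq n-\ell}$). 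So it suffices to prove the recursion.

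For $k=n$ the recursion is Proposition \ref{prop:gn-recursion} (the $j=1$ term is zero since $[0]_q=0$). For $k>n$, the key point is that $g_k(\m;x,q)=g_k(\m^{k-n};x,q)$, where $\m^{k-n}$ is regarded as a Hessenberg function on $[k]$: by Definition \ref{def:gk_k>n} applied to $\m$ we have $g_k(\m;x,q)=g_k(\m^{k-n+1};x,q)$, and applying Definition \ref{def:gk_k>n} to $\m^{k-n}$ (for the index $k$, which equals its number of vertices) gives $g_k(\m^{k-n};x,q)=g_k((\m^{k-n})^1;x,q)=g_k(\m^{k-n+1};x,q)$. Now apply Proposition \ref{prop:gn-recursion} to the Hessenberg function $\m^{k-n}$ on $[k]$ to obtain $g_k(\m^{k-n};x,q)=q\sum_{j=1}^{k}[j-1]_q e_j\,g_{k-j}(\m^{k-n};x,q)$, and replace each $g_{k-j}(\m^{k-n};x,q)$ on the right by $g_{k-j}(\m;x,q)$: this is Proposition \ref{prop:gk_invariant} when $k-j<n$, and Definition \ref{def:gk_k>n} when $n\leq k-j<k$ (both applied with appended path-length $k-n$, which lies in the admissible range because $n+(k-n)=k>k-j$). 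This gives the recursion for $k>n$, hence the theorem.

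The power-series bookkeeping of the first two paragraphs is routine, the only subtlety being the behaviour of the truncations $A_{\geq n-i}$ in low degree. The genuinely delicate step is the last paragraph: every ``append a path'' identity $g_\ell(\m)=g_\ell(\m^{n'})$ is valid only for indices $\ell$ strictly below the number of vertices of the ambient graph, so one must invoke Proposition \ref{prop:gk_invariant} and Definition \ref{def:gk_k>n} each strictly within its range — in particular it is essential that in $g_{k-j}(\m^{k-n};x,q)$ the index $k-j$ is strictly smaller than $k$, the number of vertices of $\m^{k-n}$, which holds since $j\geq 1$.
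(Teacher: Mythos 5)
Your proof is correct and follows essentially the same approach as the paper: multiply the full generating function $G(z)$ by $1-A(z)$ and use the recursion $g_k=q\sum_{j}[j-1]_qe_jg_{k-j}$ for all $k\geq n$ to show that $G(z)(1-A(z))$ is a polynomial in $z$ of degree $\leq n-1$, then identify that polynomial. The one place you genuinely improve on the paper's presentation is the middle-to-last paragraphs: the paper's proof cites only Proposition~\ref{prop:gn-recursion} (the case $k=n$) but silently uses the recursion for every $k\geq n$, whereas you explicitly derive the recursion for $k>n$ by applying Proposition~\ref{prop:gn-recursion} to $\m^{k-n}$ on $[k]$ and then transporting back along Proposition~\ref{prop:gk_invariant} (for indices $<n$) and Definition~\ref{def:gk_k>n} (for indices in $[n,k)$), carefully checking that the appended path-length $k-n$ sits in the admissible range in each case. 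This is the right way to fill the small gap the paper leaves implicit.
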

\begin{proof}
Denote by 
\[
G(z)=\sum_{k\geq 0}g_k(\m;x,t)z^k.
\]
By Proposition \ref{prop:gn-recursion} we have that
\begin{align*}
    G(z)(\sum_{j\geq1}q[j-1]e_jz^j)=&\sum_{k\geq 0}z^k(\sum_{j=1}^kq[j-1]_qe_jg_{k-j}(\m;x,t))\\
    =&\sum_{k=0}^{n-1} z^k(\sum_{j=1}^kq[j-1]_qe_jg_{k-j}(\m;x,t))+\sum_{k\geq n}z^k(\sum_{j=1}^kq[j-1]_qe_jg_{k-j}(\m;x,t))\\
    =&\sum_{k=0}^{n-1} z^k(\sum_{j=1}^kq[j-1]_qe_jg_{k-j}(\m;x,t))+\sum_{k\geq n}g_k(\m;x,t)z^k\\
    =&G(z)+\sum_{k=0}^{n-1} z^k(-g_k(\m;x,t)+\sum_{j=1}^kq[j-1]_qe_jg_{k-j}(\m;x,t))
\end{align*}
Then, we have that\footnote{checar indices dos somatorios com o enunciado}
\begin{align*}
G(z)(1-\sum_{j\geq1}q[j-1]e_jz^j)=&\sum_{k=0}^{n-1} z^k(g_k(\m;x,t)-\sum_{j=1}^kq[j-1]_qe_jg_{k-j}(\m;x,t))\\
                                 =&\sum_{k=0}^{n-1} z^kg_k(\m;x,t)(1-\sum_{j= 1}^{n-k-1}q[j-1]_qe_jz^j).
\end{align*}
This finishes the proof.
\end{proof}
Note that Theorem \ref{thm:gk_generating_function} says that $g_k(\m;x,t)$ for $k\geq n$  can be written in terms of 
\[
g_0(\m;x,t),g_1(\m;x,t),\ldots, g_{n-1}(\m;x,t).
\]
\begin{corollary}
 Let $\m\col[n]\to[n]$ be a Hessenberg function. If $g_1,\ldots, g_{n-1}$ are $e$-postive then $g_k(\m;x,t)$ is $e$-positive for every $k$.
\end{corollary}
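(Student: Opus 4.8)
The plan is to read off the claim from the generating-function identity of Theorem \ref{thm:gk_generating_function}. First I would note that $g_0(\m;x,q)$ is \emph{always} $e$-positive, independently of any hypothesis: in Definition \ref{def:gk} the condition $|\tau_1|\geq n$ forces $|\tau_1|=n$, so $\sigma$ is a single $n$-cycle, $h_0=1$, the $\rho$-product is empty, and therefore $g_0(\m;x,q)=\big(\sum_{\sigma}q^{\wt_\m(\sigma)}\big)e_0$, the sum ranging over the $n$-cycles in $S_{n,\m}$; this is a non-negative polynomial in $q$ times $e_0$. Hence the hypothesis of the corollary in fact gives that all of $g_0,g_1,\dots,g_{n-1}$ are $e$-positive, and for $k<n$ there is nothing left to prove. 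It remains to treat $k\geq n$.

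Set $A(z):=q\sum_{k\geq 2}[k-1]_q e_k z^k$, a formal power series in $z$ each of whose $z$-coefficients lies in the $\mathbb{N}[q]$-span of the elementary symmetric functions; I will call such a series \emph{$e$-positive}. The next step is the elementary observation that this class of series is closed under the sums and products used below (the infinite sums appearing being well defined since $A(z)$ has no terms below $z^2$): indeed $e_\lambda e_\mu=e_{\lambda\cup\mu}$ (multiset union of parts) and $\mathbb{N}[q]$ is a semiring, so no cancellation can occur. In particular $1/(1-A(z))=\sum_{m\geq 0}A(z)^m$ is $e$-positive. Then, for each $i$ with $0\leq i\leq n-1$, I would rewrite the factor occurring in Theorem \ref{thm:gk_generating_function} as
\[
\frac{1-q\sum_{k=2}^{n-i-1}[k-1]_q e_k z^k}{1-A(z)}
=1+\frac{A(z)-q\sum_{k=2}^{n-i-1}[k-1]_q e_k z^k}{1-A(z)},
\]
noting that the numerator $A(z)-q\sum_{k=2}^{n-i-1}[k-1]_q e_k z^k$ is simply the tail $q\sum_{k\geq \max(n-i,2)}[k-1]_q e_k z^k$ of $A(z)$ (an empty truncation being $0$). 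Both summands on the right are $e$-positive, hence so is the whole factor.

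Finally, Theorem \ref{thm:gk_generating_function} expresses $\sum_{k\geq 0}g_k(\m;x,q)z^k$ as a finite sum over $i=0,\dots,n-1$ of $g_i(\m;x,q)z^i$ times the factor just analysed. Since each $g_i$ with $i\leq n-1$ is $e$-positive and each factor is an $e$-positive series, every summand is an $e$-positive series, and therefore so is the total; reading off the coefficient of $z^k$ shows that $g_k(\m;x,q)$ is $e$-positive for every $k$. I do not expect any genuinely hard step here: the only points demanding care are the bookkeeping with the truncated sums (so that the displayed numerator really is an honest non-negative tail, with the correct lower index, in all boundary cases $n-i\le 2$) and spelling out precisely that the class of $e$-positive formal power series is closed under the sums and products invoked — both routine.
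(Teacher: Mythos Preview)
Your argument is correct. The paper's own proof is a one-liner citing Proposition~\ref{prop:gn-recursion} directly: the recursion
\[
g_k(\m;x,q)=q\sum_{i=2}^{k}[i-1]_q\,e_i\,g_{k-i}(\m;x,q)\qquad(k\geq n),
\]
together with the observation (which you also make) that $g_0$ is automatically $e$-positive, gives $e$-positivity of all $g_k$ by a straightforward induction on $k$. Your route through Theorem~\ref{thm:gk_generating_function} is a valid but slightly longer repackaging of the same recursion: the algebraic identity you use to show that each factor $(1-q\sum_{k=2}^{n-i-1}[k-1]_q e_k z^k)/(1-A(z))$ is $e$-positive is exactly what the recursive step accomplishes term by term. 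Nothing is wrong, but the inductive version from Proposition~\ref{prop:gn-recursion} avoids the bookkeeping with truncated tails.
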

\begin{proof}
This follows directly from Proposition \ref{prop:gn-recursion}.
\end{proof}

We end this section by computing the symmetric functions $g_k$ for the path graphs and relate them to derangements. We denote by $\bullet$ the graph with a single vertex. By Proposition \ref{prop:gk_invariant} and Definition \ref{def:gk_k>n}, we have that $g_k(P_n;x,q)=g_k(\bullet;x,q)$ for every $n$. The following direct corollary of Theorem \ref{thm:gk_generating_function} gives the generating function of $g_k(\bullet;x,q)$. Recall the generating function $F_2$ defined in Equation \eqref{eq:F2}

\begin{corollary}
We have the following equality:
\label{cor:gen_function_g_path}
   \[
   \sum_{k\geq 0} g_k(\bullet;x,t)z^k=\frac{1}{1-q\sum_{k\geq 2}[k-1]_qe_kz^k }=\omega(F_2(x;q,z)).
   \]
\end{corollary}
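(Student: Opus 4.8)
The plan is to apply Theorem \ref{thm:gk_generating_function} to the Hessenberg function $\m\colon[1]\to[1]$ with $\m(1)=1$, whose indifference graph is $\bullet$. The only preliminary needed is the base value $g_0(\bullet;x,q)$. Reading off Definition \ref{def:gk} with $n=1$ and $k=0$: the set $S_{1,\m}$ consists solely of the identity permutation, whose cycle decomposition is the single fixed point $\tau_1=(1)$, so the corresponding summand is $(-1)^{0}q^{0}h_{0}\cdot 1=1$ and hence $g_0(\bullet;x,q)=1$.

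Next I would specialize Theorem \ref{thm:gk_generating_function} to $n=1$. The outer sum collapses to the single index $i=0$, and the correction factor $1-q\sum_{k=2}^{n-i-1}[k-1]_qe_kz^k$ becomes $1-q\sum_{k=2}^{0}[k-1]_qe_kz^k=1$, since the index range $2\le k\le 0$ is empty. Combining this with $g_0(\bullet;x,q)=1$ yields
\[
\sum_{k\geq 0} g_k(\bullet;x,q)z^k=\frac{1}{1-q\sum_{k\geq 2}[k-1]_qe_kz^k},
\]
which is the first claimed equality.

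Finally, to identify the right-hand side with $\omega(F_2(x;q,z))$, I would apply the involution $\omega$ coefficientwise to the defining formula \eqref{eq:F2}, that is $F_2(x;q,z)=\bigl(1-\sum_{n\geq 2}q[n-1]_qh_nz^n\bigr)^{-1}$. Since $\omega$ is a ring homomorphism on $\Lambda$ sending $h_n$ to $e_n$, it extends to $\Lambda[[z]]$ and commutes with inversion of a power series whose constant term is the unit $1$; hence $\omega(F_2(x;q,z))=\bigl(1-\sum_{n\geq 2}q[n-1]_qe_nz^n\bigr)^{-1}$, which agrees with the expression obtained above.

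There is essentially no hard step: the statement is a direct corollary of Theorem \ref{thm:gk_generating_function}. The only points that deserve a moment of care are the evaluation of the empty sum in the correction factor (so that no spurious terms survive) and the observation that $\omega$ may be applied termwise to the geometric-series expansion of $F_2$; both are routine.
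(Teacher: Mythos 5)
Your proof is correct and is exactly the computation the paper implicitly intends when it calls this a "direct corollary" of Theorem~\ref{thm:gk_generating_function}: specialize to $n=1$ (so the graph is $\bullet$), note $g_0(\bullet;x,q)=1$ and that the inner correction factor is an empty sum, then identify the resulting series with $\omega(F_2)$ by applying $\omega$ termwise. No issues.
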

This is sufficient to prove the $e$-positivity of $g_k(\bullet;x,q)$.
\begin{corollary}
 For every $n$, we have that $g_k(P_n;x,q)$ is $e$-positive.
\end{corollary}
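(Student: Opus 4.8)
The plan is to read off $e$-positivity directly from the generating function in Corollary~\ref{cor:gen_function_g_path}, by expanding it as a geometric series. First I would reduce to the one-vertex graph: by Proposition~\ref{prop:gk_invariant} together with Definition~\ref{def:gk_k>n} we have $g_k(P_n;x,q)=g_k(\bullet;x,q)$ for every $k$ and every $n$, so it suffices to show that $g_k(\bullet;x,q)$ is $e$-positive for all $k\ge 0$.

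Next, set $A(z):=q\sum_{k\ge 2}[k-1]_q e_k z^k$, regarded as an element with no constant term of the ring of formal power series in $z$ over the ring of symmetric functions with coefficients in $\mathbb{N}[q]$; note $[k-1]_q=1+q+\cdots+q^{k-2}\in\mathbb{N}[q]$. By Corollary~\ref{cor:gen_function_g_path},
\[
\sum_{k\ge 0} g_k(\bullet;x,q)\,z^k=\frac{1}{1-A(z)}=\sum_{m\ge 0}A(z)^m,
\]
and since $A(z)$ is divisible by $z^2$, this is a well-defined power series whose coefficient of $z^k$ receives contributions from only finitely many $m$. Expanding the $m$-th power,
\[
A(z)^m=\sum_{k_1,\ldots,k_m\ge 2} q^m\Big(\prod_{i=1}^m[k_i-1]_q\Big)\,e_{k_1}\cdots e_{k_m}\,z^{k_1+\cdots+k_m},
\]
and using $e_{k_1}\cdots e_{k_m}=e_\lambda$ where $\lambda$ is the partition obtained by sorting $(k_1,\ldots,k_m)$, collecting the coefficient of $z^k$ yields
\[
g_k(\bullet;x,q)=\sum_{\substack{\lambda\vdash k\\ \lambda_i\ge 2\ \forall i}} c_\lambda(q)\,e_\lambda,\qquad c_\lambda(q)\in\mathbb{N}[q],
\]
which is manifestly $e$-positive; hence so is $g_k(P_n;x,q)$.

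As for difficulty, there is essentially no obstacle here: all the real content has already been extracted in identifying the generating function (Corollary~\ref{cor:gen_function_g_path}), and what remains is the routine observation that the geometric expansion of $1/(1-A(z))$ has coefficients that are $\mathbb{N}[q]$-combinations of products of elementary symmetric functions. The only point deserving a word of care is the formal-power-series bookkeeping — that $1/(1-A(z))$ makes sense and that each $z^k$-coefficient is a finite sum — which is immediate since $A(z)$ is divisible by $z^2$. One could alternatively make the coefficients $c_\lambda(q)$ explicit in terms of derangements, as in Proposition~\ref{prop:path_derangements}, but this is not needed for the $e$-positivity statement.
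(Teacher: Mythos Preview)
Your proof is correct and follows exactly the same approach as the paper: reduce to $g_k(\bullet;x,q)$ via Proposition~\ref{prop:gk_invariant} and Definition~\ref{def:gk_k>n}, then read off $e$-positivity from the generating function of Corollary~\ref{cor:gen_function_g_path} by expanding $1/(1-A(z))$ as a geometric series. The paper simply remarks that the generating function ``is sufficient'' without spelling out the expansion, so your version is a more detailed rendering of the same argument.
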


We now relate $g_k(\bullet;x,q)$ with derangements.
\begin{definition}
 Let $w=w_1w_2\ldots w_n$ be a word in the alphabet $\mathbb{P}$. A derangement of $w$ is a word $\overline{w}=\overline{w}_1\ldots \overline{w}_n$ such that every letter $i\in \mathbb{P}$ appears the same number of times in $w$ and in $\overline{w}$ and $w_j\neq \overline{w}_j $ for every $j=1,\ldots, n$. The number of excendances of $\overline{w}$ is the number of indices $j\in [n]$ such that $\overline{w}_j>w_j$ and is denoted as $\exc_w(\overline{w})$
\end{definition}
\begin{definition}
 For each partition $\lambda=(\lambda_1,\ldots, \lambda_{\ell(\lambda)})\vdash n$, we define $w_{\lambda}=1^{\lambda_1}2^{\lambda_2}\ldots \ell(\lambda)^{\lambda_{\ell(\lambda)}}$ and let
 \[
 c_{\lambda}(q):=\sum_{\overline{w}}q^{\exc_{w_\lambda}(\overline{w})}
 \]
 where the sum runs through all derangements of $w_\lambda$.
\end{definition}

\begin{proposition}
\label{prop:path_derangements}
   We have that
   \begin{equation}
   \label{eq:gk_clambda}
   g_k(\bullet;x,t)=\sum_{\lambda\vdash n}c_{\lambda}(q)m_{\lambda}.    
   \end{equation}
   \end{proposition}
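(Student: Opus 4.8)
The plan is to combine Corollary~\ref{cor:gen_function_g_path} with the interpretation of $F_2$ as a generating function attached to multiset derangements. By Corollary~\ref{cor:gen_function_g_path},
\[
\sum_{k\geq 0} g_k(\bullet;x,q)\,z^k \;=\; \omega\!\left(F_2(x;q,z)\right)\;=\;\frac{1}{1-q\sum_{j\geq 2}[j-1]_q\,e_j z^j},
\]
and since $g_k(\bullet;x,q)$ has degree $k$, the proposition is equivalent to the symmetric-function identity $\bigl(1-q\sum_{j\geq 2}[j-1]_q e_j z^j\bigr)\sum_{\lambda}c_\lambda(q)\,m_\lambda\,z^{|\lambda|}=1$, the sum over all partitions. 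Since the power series $1-q\sum_{j\ge 2}[j-1]_q e_j z^j$ is invertible and $c_\emptyset(q)=1$, it suffices to check that $\sum_\lambda c_\lambda(q)m_\lambda z^{|\lambda|}$ satisfies this functional equation, i.e. that for every $\lambda\neq\emptyset$
\[
c_\lambda(q)\;=\;\sum_{\substack{T\subseteq[\ell(\lambda)]\\|T|\geq 2}}q\,[\,|T|-1\,]_q\;c_{\lambda-\mathbf 1_T}(q),
\]
where $\lambda-\mathbf 1_T$ denotes the partition obtained from $\lambda$ by subtracting $1$ from each part indexed by $T$ and discarding the resulting zeros. That this recursion is equivalent to the displayed identity is routine: cross-multiply, extract the coefficient of $m_\lambda$, and use the monomial expansion $[m_\lambda](e_j m_\mu)=\#\{T\subseteq[\ell(\lambda)]:|T|=j,\ \lambda-\mathbf 1_T=\mu\}$, obtained by reading off the coefficient of $x_1^{\lambda_1}x_2^{\lambda_2}\cdots$ in $e_j m_\mu$.

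The substance of the argument is the combinatorial proof of this recursion. Given a derangement $\overline w$ of $w_\lambda=1^{\lambda_1}2^{\lambda_2}\cdots$, record its transportation matrix $a_{ii'}=\#\{p:\ (w_\lambda)_p=i,\ \overline w_p=i'\}$: a loopless balanced digraph on $[\ell(\lambda)]$ with in- and out-degree $\lambda_i$ at vertex $i$, for which $\exc_{w_\lambda}(\overline w)=\sum_{i<i'}a_{ii'}$ and for which the number of derangements realizing a fixed matrix is $\prod_i\binom{\lambda_i}{a_{i1},a_{i2},\dots}$. I would prove the recursion by peeling off a canonically chosen ``first cyclic block'': a subset $T\subseteq[\ell(\lambda)]$ of home-values together with a pattern on $T$ accounting for the factor $q[\,|T|-1\,]_q$, so that removing one unit of flow at each vertex of $T$ leaves a derangement of $w_{\lambda-\mathbf 1_T}$. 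This is cleanest after passing to the standardization of the pair $(w_\lambda,\overline w)$ to an ordinary excedance-preserving derangement of $[\,|\lambda|\,]$ constrained by the set composition $\{1,\dots,\lambda_1\},\{\lambda_1+1,\dots,\lambda_1+\lambda_2\},\dots$, which identifies the recursion with (a refinement of) the banner analysis of the derangement Eulerian quasisymmetric functions in Shareshian--Wachs~\cite{ShareshianWachs_Eulerian}. Their results already yield $\sum_{k\ge 0}Q^{\mathrm{der}}_k(x;q)z^k=F_2(x;q,z)$ together with its monomial/forgotten expansion, so one may alternatively just invoke \cite{ShareshianWachs_Eulerian} and only verify that the excedance statistic $\exc_{w_\lambda}$ used here agrees with theirs.

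The main obstacle is setting up this weight-preserving bijection correctly. The naive idea — decompose the balanced digraph into directed cycles — fails on two counts: an arc-disjoint cycle decomposition is not canonical, and the excedance generating polynomial of a single $j$-cycle is the Eulerian polynomial $qA_{j-1}(q)$, \emph{not} $q[j-1]_q$. Consequently the peeled block has to be organized by a distinguished \emph{linear} order rather than a cyclic one, and selected by a rule (for instance via the position of the smallest letter involved) that renders the correspondence bijective; the heart of the proof is checking that the failure of canonicity of the cycle decomposition and the discrepancy between $qA_{j-1}(q)$ and $q[j-1]_q$ compensate exactly. Once the recursion is established, the proposition follows, giving the promised interpretation of the coefficients of $g_k(\bullet;x,q)$ in the monomial basis as the excedance-enumerators $c_\lambda(q)=\sum_{\overline w}q^{\exc_{w_\lambda}(\overline w)}$ over derangements of $w_\lambda$.
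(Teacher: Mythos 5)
Your proposal, in its fallback form, matches the paper's own proof exactly: both deduce from Corollary~\ref{cor:gen_function_g_path} that $\sum_{k\geq 0} g_k(\bullet;x,q)z^k = \omega(F_2(x;q,z))$, then cite \cite{ShareshianWachs_Eulerian} (and the other references) for the fact that this is the monomial generating function of the multiset-derangement excedance enumerators $c_\lambda(q)$. Your attempted direct proof of the recursion $c_\lambda(q)=\sum_{T\subseteq[\ell(\lambda)],\,|T|\ge 2}q[\,|T|-1\,]_q\,c_{\lambda-\mathbf 1_T}(q)$ is left incomplete, but you correctly pinpoint the real obstruction --- a $j$-cycle's excedance enumerator is $qA_{j-1}(q)$, not $q[j-1]_q$, so naive cycle-peeling fails --- and your closing remark that one can simply invoke \cite{ShareshianWachs_Eulerian} (checking that the excedance statistic agrees) is precisely the argument the paper records.
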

\begin{proof}
 The right hand side of Equation \eqref{eq:gk_clambda} has the same generating function of $g_k(\bullet;x,t)$. See \cite{AskeyIsmail}, \cite{Stanley92} \cite{Kim_derangements} and \cite{ShareshianWachs_Eulerian}.
\end{proof}

\section{Further directions}
The chromatic quasisymmetric function of indifference graphs satisfy a linear relation called the \emph{modular law} that, together with the values on complete graphs, is enough to characterize the chromatic symmetric function (see \cite{AN}). The symmetric functions $g_k$ do not satisfy the modular law (one reason is that they do not have the property that $g_k(\m;x,q) = g_k(\m^t;x,q)$, where $\m^t$ is the transposed Hessenberg function of $\m$).
\begin{question}
Do the functions $g_i$ satisfy some modification/restriction of the modular law? 
\end{question}
\begin{question}
Is there an algorithm for computing the functions $g_k$ as the one in \cite{AN} to compute the chromatic quasisymmetric functions?
\end{question}

The chromatic quasisymmetric function is defined in the monomial basis of the symmetric functions. More than that, there are explicit combinatorial descriptions of the coefficients of $\csf_q(\m;x,q)$ in other basis (see \cite{ShareshianWachs}, \cite{Atha}, \cite{CHSS}). 
\begin{question}
Give a combinatorial description of the coeffients of the monomial-expasion of $g_k(\m;x,q)$.
\end{question}

\begin{question}
Give a combinatorial description of the coeffients of the Schur-expasion of $g_k(\m;x,q)$.
\end{question}

\appendix

\section{The $g$ symmetric functions for general graphs}

We can extend the definition of the symmetric functions $g$ to general graphs, although it still depends on the choice of a distinguished vertex. For a pair $(G,v_0)$ where $v_0$ is a vertex of $G$ and a set $S\subset E(G)$ we denote by $\lambda_0(S)=\lambda_{0}(G,v_0,S)$ the number of vertices in the component of $G\setminus S$ containg $v_0$ and by $\lambda(S)=\lambda(G,v_0,S)$ the partition induced by the number of vertices in the components of $G\setminus S$ that do not contain $v_0$.

\begin{definition}
\label{def:gk_general}
For a graph $G$ with $n$ vertices and  with a distinguished vertex $v_0$, we define
\[
g_{k}(G,v_0;x)=\sum_{S\subset E(G)}(-1)^{|S|-n+k+1}h_{\lambda_0(S)-n+k} p_{\lambda(S)}
\]
for every $k=0,\ldots, n-1$.
\end{definition}
\begin{remark}
\label{rem:multiple_edges}
We could make the definition above when $G$ has multiple edges as well. However, if $\underline{G}$ is the graph obtained by $G$ by removing the multiple edges, we have that $g_k(G,v_0;x)=g_k(\underline{G},v_0;x)$.
\end{remark}
We note that the sum in Definition \ref{def:gk_general} still makes sense when $k=n$, in that case we have the following result
\begin{proposition}
\label{prop:gn_pseudo}
If $n>0$, we have that
\[
\sum_{S\subset E(G)}(-1)^{|S|+1}h_{\lambda_0(S)} p_{\lambda(S)}+\sum_{k=0}^{n-1}e_{n-k}g_k(G,v_0;x) = 0.
\]
\end{proposition}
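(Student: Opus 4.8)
The plan is to reduce this identity for a general graph $(G,v_0)$ to the corresponding statement for Hessenberg functions, namely Proposition \ref{prop:gn-recursion}, by first treating the case of indifference graphs and then exploiting the fact that both sides of the claimed identity are determined by ``local'' deletion/contraction data. Concretely, I would begin by rewriting the first sum in terms of the $g_k$. Grouping the subsets $S\subset E(G)$ according to the component containing $v_0$: if that component has $j$ vertices and is spanned by an edge set $S_0$, while the complementary part contributes a partition $\mu$ with $|\mu|=n-j$, then $\lambda_0(S)=j$ and $\lambda(S)=\mu$, so
\[
\sum_{S\subset E(G)}(-1)^{|S|+1}h_{\lambda_0(S)}p_{\lambda(S)}
=\sum_{j=0}^{n}h_j\Bigl(\sum_{\substack{S:\ \lambda_0(S)=j}}(-1)^{|S|+1}p_{\lambda(S)}\Bigr).
\]
The key observation is that, comparing with Definition \ref{def:gk_general}, the inner sum with $\lambda_0(S)=j$ fixed is, up to sign, exactly $g_{n-j}(G,v_0;x)$ shifted appropriately — that is, $g_k(G,v_0;x)$ collects precisely the terms with $\lambda_0(S)-n+k=0$ plus corrections. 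So the first step is the purely formal manipulation showing that the left-hand side of the proposition equals $\sum_{k=0}^{n}(\text{something})\,g_k$-type expressions; this is the analogue of the computation in the proof of Proposition \ref{prop:gn-recursion}, and I would mirror that argument.

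Second, I would establish the identity for indifference graphs $G_\m$ directly. In that case Definition \ref{def:gk_general} with $v_0$ the vertex $1$ must be reconciled with Definition \ref{def:gk}: a subset $S\subset E(G_\m)$ determines its component containing vertex $1$, and the ``$g_k$ in the $h,p$ basis'' of Definition \ref{def:gk_general} should match $g_k(\m;x,q)|_{q=1}$ after applying $\omega$ and using that $\csf(\m;x)=\sum_{\sigma}\omega(p_{\lambda(\sigma)})$ (Equation \eqref{eq:Stanley_p}) together with the recursive structure of $S_{n,\m}$. Once that dictionary is in place, Proposition \ref{prop:gn-recursion} (at $q=1$, and noting $[i-1]_1=i-1$, $[i]_1=i$) combined with Proposition \ref{prop:csf_from_g} at $q=1$ gives
\[
\csf(\m;x)=\sum_{i=1}^{n}(n-i)e_{n-i}g_i(\m;x,1)
\quad\text{and}\quad
g_n(\m;x,1)=\sum_{i=1}^{n}(i-1)e_ig_{n-i}(\m;x,1),
\]
and subtracting (after identifying $\csf(\m;x)$ with $\sum_{S}(-1)^{|S|+1}h_{\lambda_0}p_{\lambda}$-type data and $g_n$ with the $k=n$ specialization of Definition \ref{def:gk_general}) collapses to exactly $\sum_{S}(-1)^{|S|+1}h_{\lambda_0(S)}p_{\lambda(S)}+\sum_{k=0}^{n-1}e_{n-k}g_k(G_\m,1;x)=0$. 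The telescoping here is the heart of the matter: it is the same cancellation that drives Proposition \ref{prop:gn-recursion}, just transported to the graph-theoretic side.

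Third and finally, I would remove the indifference hypothesis. The cleanest route is to observe that both sides of the claimed identity are \emph{additive} in a deletion/contraction or edge-by-edge sense: the definition $g_k(G,v_0;x)=\sum_{S\subset E(G)}(-1)^{|S|-n+k+1}h_{\lambda_0(S)-n+k}p_{\lambda(S)}$ only depends on the matroid-like data of which edge subsets disconnect which vertex sets, and the identity to be proved is a polynomial identity in these contributions. Splitting off a single edge $e$ of $G$ incident to a vertex, one gets a relation expressing $g_k(G,v_0;x)$ in terms of $g$-functions of $G\setminus e$ and of the graph with $e$ contracted; since the asserted identity holds for the one-vertex graph (trivially) and for path graphs, and is preserved under this operation, an induction on $|E(G)|$ finishes it. An even simpler alternative, if one does not want to set up deletion/contraction carefully: prove the identity by expanding everything in the $p$-basis and checking it coefficient-by-coefficient, since both sums are explicit sums over $S\subset E(G)$ and the $e_{n-k}$ multiply in via $e_{n-k}=\sum (\text{sign})h_{(\cdot)}$-type relations — the bookkeeping is routine once the $j$-grading by $\lambda_0$ is fixed.

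The main obstacle I anticipate is the second step: pinning down the exact dictionary between Definition \ref{def:gk_general} (stated in the $h$/$p$ basis with signs over edge subsets) and Definition \ref{def:gk} (stated over $S_{n,\m}$ with $\rho$-functions and $\wt_\m$), because one must verify that the sign conventions and the passage $q\to 1$ line up, and that the ``component containing $v_0$'' on the graph side corresponds to the first cycle $\tau_1$ on the permutation side. Once that correspondence is nailed down, the rest is the telescoping already present in Propositions \ref{prop:gn-recursion} and \ref{prop:csf_from_g}.
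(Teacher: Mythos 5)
Your main route does not work, and the step where it breaks is right at the start. After grouping the first sum by $j=\lambda_0(S)$, you assert that the inner sum over $\{S:\lambda_0(S)=j\}$ is ``up to sign, exactly $g_{n-j}(G,v_0;x)$ shifted appropriately.'' That is false: by Definition~\ref{def:gk_general}, $g_{n-j}(G,v_0;x)=\sum_{S}(-1)^{|S|-j+1}h_{\lambda_0(S)-j}p_{\lambda(S)}$ ranges over \emph{all} $S$, and the terms with $\lambda_0(S)>j$ carry nontrivial $h$-factors, so $g_{n-j}$ is not a sign-twist of the $j$-stratum. The ``plus corrections'' you allow for is exactly the part that needs an argument, and without it the reduction collapses. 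The subsequent plan — verify the identity for indifference graphs via Propositions~\ref{prop:gn-recursion} and \ref{prop:csf_from_g}, then propagate by deletion--contraction — has a second, structural problem: in the paper, the deletion--contraction relation for $g_{n-1}$ is \emph{derived from} Proposition~\ref{prop:gn_pseudo}, so invoking deletion--contraction to prove Proposition~\ref{prop:gn_pseudo} would be circular unless you first reprove that relation independently. You also would need to set up a careful dictionary between Definitions~\ref{def:gk} and~\ref{def:gk_general} (as you yourself flag), which is a substantial and unnecessary detour.

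The ``even simpler alternative'' you mention at the end is in fact the paper's proof, and it is a one-liner; you should carry it out rather than relegate it to a footnote. Substituting Definition~\ref{def:gk_general} and interchanging sums, the contribution of $\sum_{k=0}^{n-1}e_{n-k}g_k(G,v_0;x)$ attached to a fixed $S$ is $(-1)^{|S|+1}p_{\lambda(S)}\sum_{j=1}^{n}(-1)^{j}e_j h_{\lambda_0(S)-j}$ (substituting $j=n-k$). Since $h_m=0$ for $m<0$ and $\lambda_0(S)\geq 1$ whenever $n>0$, the identity $\sum_{j=0}^{m}(-1)^je_jh_{m-j}=0$ for $m>0$ gives $\sum_{j=1}^{n}(-1)^{j}e_j h_{\lambda_0(S)-j}=-h_{\lambda_0(S)}$, which cancels the first sum term by term. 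No detour through indifference graphs, $q$-specializations, or deletion--contraction is needed.
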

\begin{proof}
  The result follows from the equality
  \[
  \sum (-1)^ke_{n-k}h_{\lambda_0(S)-n+k} = 0.
  \]
\end{proof}

\begin{proposition}
   The following equality holds
   \[
   \csf(G;x) = \sum_{k=0}^{n-1} (n-k)e_{n-k}g_k(G,v_0;x).
   \]
\end{proposition}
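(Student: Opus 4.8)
The plan is to substitute Definition~\ref{def:gk_general} into the right-hand side, interchange the two summations, and reduce the coefficient of each $p_{\lambda(S)}$ to a classical Newton-type identity. Expanding and pulling the sign $(-1)^{|S|-n+k+1}=(-1)^{|S|}(-1)^{k-n+1}$ out, one gets
\[
\sum_{k=0}^{n-1}(n-k)\,e_{n-k}\,g_k(G,v_0;x)=\sum_{S\subseteq E(G)}(-1)^{|S|}\,p_{\lambda(S)}\sum_{k=0}^{n-1}(n-k)\,(-1)^{k-n+1}e_{n-k}\,h_{\lambda_0(S)-n+k},
\]
so it suffices to show that for each $S$ the inner sum equals $p_{\lambda_0(S)}$. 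Substituting $m=n-k$ (so $m$ runs over $1,\dots,n$ and $(-1)^{k-n+1}=(-1)^{m-1}$) and writing $\ell=\lambda_0(S)$, with $1\le\ell\le n$ since the component of $v_0$ is nonempty, the inner sum becomes $\sum_{m=1}^{n}(-1)^{m-1}m\,e_m\,h_{\ell-m}$; as $h_{\ell-m}=0$ for $m>\ell$, this is $\sum_{m=1}^{\ell}(-1)^{m-1}m\,e_m\,h_{\ell-m}$.

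Next I would invoke the identity $p_{\ell}=\sum_{m=1}^{\ell}(-1)^{m-1}m\,e_m\,h_{\ell-m}$, which is the weighted companion of the relation $\sum_{m}(-1)^{m}e_m h_{\ell-m}=0$ ($\ell\ge1$) used in the proof of Proposition~\ref{prop:gn_pseudo}. It follows from differentiating $H(t)E(-t)=1$: this gives $H'(t)/H(t)=H(t)E'(-t)$, and since $H'(t)/H(t)=\sum_{j\ge1}p_j t^{j-1}$ and $E'(-t)=\sum_{m\ge1}(-1)^{m-1}m\,e_m t^{m-1}$, comparing coefficients of $t^{\ell-1}$ yields the claim. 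Feeding this back, the inner sum is $p_{\lambda_0(S)}$, hence
\[
\sum_{k=0}^{n-1}(n-k)\,e_{n-k}\,g_k(G,v_0;x)=\sum_{S\subseteq E(G)}(-1)^{|S|}\,p_{\lambda_0(S)}\,p_{\lambda(S)}.
\]

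Finally I would identify the right-hand side with Stanley's power-sum expansion of the chromatic symmetric function (\cite{Stan95}): $\csf(G;x)=\sum_{S\subseteq E(G)}(-1)^{|S|}p_{\pi(S)}$, where $\pi(S)$ is the partition of $V(G)$ whose parts are the sizes of the connected components of the spanning subgraph $(V(G),S)$. Splitting off the part corresponding to the component of $v_0$ and using the multiplicativity of the power sums gives $p_{\pi(S)}=p_{\lambda_0(S)}\,p_{\lambda(S)}$, which completes the proof. I do not expect a genuine obstacle here: once the Newton identity is isolated the argument is pure bookkeeping, and the only delicate points are keeping the sign straight under the substitution $m=n-k$ and noticing that the vanishing $h_{\ell-m}=0$ for $m>\ell$ makes the finite Newton identity apply with no further hypothesis on $n$.
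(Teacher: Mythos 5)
Your proof is correct and takes essentially the same route as the paper's: expand via Definition~\ref{def:gk_general}, swap the sums, reduce the inner sum over $k$ to the Newton identity $p_\ell=\sum_{m=1}^{\ell}(-1)^{m-1}m\,e_m h_{\ell-m}$, and then match against Stanley's power-sum expansion of the chromatic symmetric function. The only cosmetic differences are your reindexing $m=n-k$ (the paper instead shifts $k$ by $n-\lambda_0(S)$) and that you spell out the generating-function derivation of the Newton identity, which the paper leaves implicit.
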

\begin{proof}
  By \cite{Stan95} we have that
  \[
  \csf(G;x)= \sum_{S\subset E(G)}(-1)^{|S|}p_{\lambda_0(S)}p_{\lambda(S)}.
  \]
  On the other hand
  \begin{align*}
      \sum_{k=0}^{n-1} (n-k)e_{n-k}g_k(G,v_0;x) & = \sum_{k=0}^{n-1} (n-k)e_{n-k} \sum_{S\subset E(G)}(-1)^{|S|-n+k+1}h_{\lambda_0(S)-n+k} p_{\lambda(S)}\\
       &= \sum_{S\subset E(G)}(-1)^{|S|}p_{\lambda(S)}\sum_{k=0}^{n-1}(-1)^{-n+k+1}(n-k)e_{n-k}h_{\lambda_0(S)-n+k}\\
       &= \sum_{S\subset E(G)}(-1)^{|S|}p_{\lambda(S)}\sum_{k=0}^{\lambda_0(S)-1}(-1)^{1-\lambda_0(S)+k}(\lambda_0(S)-k)e_{\lambda_0(S)-k}h_{k}\\
       &=  \sum_{S\subset E(G)}(-1)^{|S|}p_{\lambda_0(S)}p_{\lambda(S)}.
  \end{align*}
  The result follows.
\end{proof}

\begin{proposition}[Deletion-Contraction]
If $e\in E(G)$ is an edge incident to $v_0$, then 
\[
g_{k}(G,v_0;x) =  g_{k}(G\setminus e,v_0;x)+g_{k}(G/e,v_0;x) 
\]
for every $k<n-1$ and
\[
g_{n-1}(G,v_0;x)=g_{n-1}(G\setminus e,v_0;x) - \sum_{k=0}^{n-2}e_{n-1-k}g_k(G/e,v_0;x).
\]
\end{proposition}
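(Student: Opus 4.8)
The plan is to break the sum defining $g_k(G,v_0;x)$ over $S\subseteq E(G)$ into the subsets that do not contain $e$ and the subsets that do, and to recognize the two resulting pieces in terms of $G\setminus e$ and $G/e$. Throughout, write $e=\{v_0,u\}$ with $u\neq v_0$.

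For the subsets $S$ with $e\notin S$, note that these are exactly the subsets of $E(G\setminus e)$, and that the subgraph $G\setminus S$ used in Definition \ref{def:gk_general} is literally the same graph whether formed inside $G$ or inside $G\setminus e$ (both have vertex set $V(G)$ and edge set $S$). Consequently $|S|$, $\lambda_0(S)$, and $\lambda(S)$ are unchanged, and
\[
\sum_{S\subseteq E(G),\ e\notin S}(-1)^{|S|-n+k+1}h_{\lambda_0(S)-n+k}p_{\lambda(S)}=g_k(G\setminus e,v_0;x),
\]
which supplies the first summand in both stated identities.

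For the subsets containing $e$, write $S=S'\sqcup\{e\}$, where $S'$ ranges over the subsets of $E(G/e)=E(G)\setminus\{e\}$. Since $e$ joins $v_0$ and $u$, these two vertices belong to the same connected component of $G\setminus(S'\sqcup\{e\})$; hence contracting $e$ merges them inside a single component and leaves every other component unchanged, giving $\lambda_0(G,v_0,S)=\lambda_0(G/e,v_0,S')+1$, $\lambda(G,v_0,S)=\lambda(G/e,v_0,S')$, and $|S|=|S'|+1$. Substituting these and recalling that $G/e$ has $n-1$ vertices, the summand indexed by $S$ becomes the summand indexed by $S'$ of $\sum_{S'\subseteq E(G/e)}(-1)^{|S'|-(n-1)+k+1}h_{\lambda_0(G/e,S')-(n-1)+k}p_{\lambda(G/e,S')}$. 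If $k\le n-2$ this is, by definition, $g_k(G/e,v_0;x)$, so combining with the previous paragraph gives $g_k(G,v_0;x)=g_k(G\setminus e,v_0;x)+g_k(G/e,v_0;x)$. If $k=n-1$ the same sum collapses to $\sum_{S'}(-1)^{|S'|+1}h_{\lambda_0(G/e,S')}p_{\lambda(G/e,S')}$, which is precisely the expression $\sum_{S'\subseteq E(G/e)}(-1)^{|S'|+1}h_{\lambda_0(S')}p_{\lambda(S')}$ appearing in Proposition \ref{prop:gn_pseudo} for the $(n-1)$-vertex graph $G/e$; that proposition rewrites it as $-\sum_{k'=0}^{n-2}e_{n-1-k'}g_{k'}(G/e,v_0;x)$, and adding back the $e\notin S$ contribution yields the second identity.

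The single step that requires care is the effect of contraction on $\lambda_0$ and $\lambda$, and this is exactly where the hypothesis that $e$ be incident to $v_0$ is used: it forces the two endpoints of $e$ into one block of the partition determined by $S'\sqcup\{e\}$, so contracting $e$ only shrinks $v_0$'s block, and by exactly one vertex. Beyond that, the argument is bookkeeping — a reindexing of the defining sum together with the identity of Proposition \ref{prop:gn_pseudo} — and no geometric input is needed since $g_k(G,v_0;x)$ is defined combinatorially. (Any loops or multiple edges produced by the contraction are immaterial: the defining sum is insensitive to them, cf. Remark \ref{rem:multiple_edges}.)
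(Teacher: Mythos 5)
Your proof is correct and follows essentially the same route as the paper: split the defining sum over $S\subseteq E(G)$ into the parts with $e\notin S$ and $e\in S$, identify the first with $g_k(G\setminus e,v_0;x)$, reindex the second via $S=S'\sqcup\{e\}$ to express it in terms of $G/e$, and invoke Proposition \ref{prop:gn_pseudo} for the boundary case $k=n-1$. The extra sentence you add explaining why $\lambda_0$ drops by exactly one (because $e$ is incident to $v_0$) is a helpful clarification of a step the paper leaves implicit, but the argument itself is identical.
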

\begin{proof}
  First assume that $k<n-1$. Then, we can write
  \begin{multline*}
  g_{k}(G,v_0;x)=\sum_{\substack{S\subset E(G)\\ e \in S}}(-1)^{|S|-n+k+1}h_{\lambda_{0}(G,v_0,S)-n+k} p_{\lambda(G,v_0,S)}+\\ +\sum_{\substack{S\subset E(G)\\ e \notin S}}(-1)^{|S|-n+k+1}h_{\lambda_{0}(G,v_0,S)-n+k} p_{\lambda(G,v_0,S)}.
  \end{multline*}
  However, if $e\in S$, then $\lambda_{0}(G,v_0,S) = \lambda_{0}(G/e,v_0, S\setminus e)+1$ which implies $\lambda_{0}(G,v_0,S)-n+k = \lambda_{0}(G/e,v_0, S\setminus e) - (n-1)+k$. More so, if $e\notin S$, then $\lambda_{0}(G,v_0,S) = \lambda_{0}(G\setminus e,v_0, S)$. This means that
  \[
g_{k}(G,v_0;x) =  g_{k}(G\setminus e,v_0;x)+g_{k}(G/e,v_0;x).
\]
  
  If $k=n-1$. We have that 
  \[
  g_{n-1}(G,v_0;x)=\sum_{\substack{S\subset E(G)\\ e \in S}}(-1)^{|S|-n+k+1}h_{\lambda_{0}(G,v_0,S)-n+k} p_{\lambda(G,v_0,S)}+g_{n-1}(G\setminus e, v_0;x),
  \]
  by Proposition \ref{prop:gn_pseudo}, we get
  \[
  g_{n-1}(G,v_0;x)=-\sum_{k=0}^{n-2}e_{n-1-k}g_k(G/e,v_0;x)+g_{n-1}(G\setminus e, v_0;x),
  \]
  and the result follows.
\end{proof}

\section{LLT polynomial for the Path graph and face vector}
\label{app:Face_LLT}
 Consider the $S_n$-action on $\mathbb{R}^{n-1}$ permuting the vectors $e_1,\ldots, e_{n-1}$ and $e_n=-(e_1+\ldots e_{n-1}$). For a fan $\Sigma\subset \mathbb{R}^{n-1}$ that is invariant under this action, that is, if $\delta\in \Sigma$, then $\sigma(\delta)\in \Sigma$ for every $\sigma\in S_n$,  we can consider the graded $S_n$-module $C_\Sigma$ whose basis are the cones is $\Sigma$ with degree equal to codimension.

Let $\Sigma_0$ be the simplicial fan whose cones are $\langle e_i \rangle_{i\in S}$ for every $S\subsetneqq [n]$. Let $\Sigma_1$ be the first barycentric subdivison of $\Sigma_0$. Also let $P_n$ be the path graph on $n$-vertices.

\begin{proposition}
    We have that
    \[
    \ch(C_{\Sigma_1})=\omega(\LLT(P_n;x, q+1)).
    \]
\end{proposition}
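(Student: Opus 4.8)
## Proof Proposal

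The plan is to identify both sides as Frobenius characters of $S_n$-equivariant objects attached to the refinement $\Sigma_0 \rightsquigarrow \Sigma_1$, and then to recognize the combinatorics as the LLT polynomial of $P_n$. First I would unwind the definition of $C_{\Sigma_1}$: its basis is indexed by cones of $\Sigma_1$, which (as $\Sigma_1$ is the barycentric subdivision of $\Sigma_0$) correspond to chains $\emptyset \subsetneq S_1 \subsetneq S_2 \subsetneq \cdots \subsetneq S_r \subsetneq [n]$ of proper nonempty subsets of $[n]$, with degree equal to the codimension $n-1-r$. The $S_n$-action permutes these chains by permuting the ground set $[n]$. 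So $\ch(C_{\Sigma_1})$ is a generating function over flags of subsets, graded by length, and I would first write it as an explicit sum of induced trivial representations: the stabilizer of a chain of type $(|S_1|, |S_2\setminus S_1|, \ldots, |[n]\setminus S_r|)$ gives a Young-subgroup contribution $h_\mu$ for the corresponding composition $\mu$ of $n$.

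Next I would bring in the geometry already developed in the paper. The fan $\Sigma_0$ is the fan of $\mathbb{P}^{n-1}$ and $\Sigma_1$ is its barycentric subdivision, so $X(\Sigma_1)$ is the permutahedral variety and $X(\Sigma_1)\to X(\Sigma_0)=\mathbb{P}^{n-1}$ is exactly the forgetful map studied in Section \ref{sec:geom} in the special case where $\m$ is the Hessenberg function of the path graph $P_n$ (equivalently $\m(i)=\min(i+1,n)$), since the permutahedral variety is the regular semisimple Hessenberg variety for $P_n$. The key point is that the graded $S_n$-module $C_{\Sigma_1}$ built from the face-counting complex is, up to the standard shift and the relation between Betti numbers and face numbers of a simplicial fan recalled in the introduction, the associated graded of $H^*(X(\Sigma_1))$ refined by the stratification coming from $\Sigma_0$ — i.e. it assembles the pieces $H^*(V(F))\otimes L_F$ of the decomposition \eqref{eq:decomp_local_h_vector} but with $H^*(V(F))$ replaced by its face/flag enumerator. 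Concretely, I expect $\ch(C_{\Sigma_1})$ to equal $\sum_k [\text{face enumerator of the }\mathbb{P}^{k}\text{-strata}]\cdot \ch(L_k)$, with the $\ch(L_k)$ being the $\omega$-duals of the $g_k(P_n)$ computed via Corollary \ref{cor:gen_function_g_path}; the face enumerators of $\mathbb{P}^k$-type strata contribute the extra $q^i[i]_q$-type factors that distinguish $\csf_q$ evaluated at $q+1$ from $\csf_q$ itself.

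The combinatorial heart of the argument is then to match the resulting generating function with $\omega(\LLT(P_n;x,q+1))$. For this I would use the known product formula for the LLT polynomial of the path graph (equivalently, the $F_1/F_2$ generating functions recalled in the introduction): $\LLT(P_n;x,q)$ has generating function $\sum_n \LLT(P_n;x,q)z^n = \big(\sum_{n\ge 0} h_n z^n\big)\big/\big(1 - \sum_{n\ge 2}(q-1)[n-1]_q\, h_n z^n\big)$ after the appropriate normalization (the LLT polynomial of $P_n$ differs from $\csf_q(P_n)$ by replacing $\asc$-statistics with a $q$ that counts all of them, which is precisely the $q\mapsto q+1$ substitution relating $[n-1]_{q}$ to $(q+1)[n-1]_{q+1}$). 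Substituting $q+1$ for $q$ turns the denominator into $1 - \sum_{n\ge 2} q[n-1]_{q+1} h_n z^n$, and applying $\omega$ and matching with the flag-counting generating function for $C_{\Sigma_1}$ — which by the first step is $\big(\sum h_n z^n\big)\big/\big(1 - \sum_{n\ge 2} (\text{number of interior faces of the barycentrically subdivided simplex on }n\text{ vertices, weighted by codimension}) z^n\big)$, and that interior-face weight is exactly $q[n-1]_{q+1}$ by a direct barycentric-subdivision count — finishes the proof.

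The main obstacle I anticipate is pinning down the precise normalization and the exact weight statistic on the cones of $\Sigma_1$: one must check that "codimension of the cone" translates into the correct power of $q$ so that the local contributions assemble into $q[n-1]_{q+1}$ rather than some off-by-one or $q$-vs-$(q+1)$ variant, and that the $S_n$-equivariance is genuinely the permutation action and not twisted by a sign. I would handle this by carefully doing the $n=2,3$ cases by hand against Figures \ref{fig:P2} and \ref{fig:Bl_P2} (where the $\bH$-vectors $(1,1,1)$ and $(1,4,1)$ and their symmetric refinements are already computed) before committing to the general identity.
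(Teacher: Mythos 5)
Your first paragraph is exactly the right start, and in fact it is already half the paper's proof: the cones of $\Sigma_1$ correspond to chains $\emptyset\subsetneq S_1\subsetneq\cdots\subsetneq S_d\subsetneq[n]$, such a chain has codimension $n-1-d=n-\ell(\mu)$ where $\mu=(|S_1|,|S_2\setminus S_1|,\ldots,|[n]\setminus S_d|)$, and the $S_n$-orbit of chains of a given composition type $\mu$ carries the permutation module $\ind_{S_\mu}^{S_n}$, so
\[
\ch(C_{\Sigma_1})=\sum_{\mu\vDash n}q^{\,n-\ell(\mu)}h_\mu.
\]
You should just write this identity down and stop; the rest of your proposal goes off in a direction that does not close.

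The second paragraph conflates two different $S_n$-modules. $C_{\Sigma_1}$ is the \emph{face} module (basis indexed by cones, Frobenius character giving a symmetric $\bF$-vector); the decomposition \eqref{eq:decomp_local_h_vector} is a statement about the \emph{cohomology} of the toric variety, whose character is the symmetric $\bH$-vector. The two are linked by the $\bF\!\to\!\bH$ change of variables recalled in the introduction, but they are not the same graded module, and there is no ``associated graded of $H^*(X(\Sigma_1))$ refined by the stratification'' identification of the form you sketch. Trying to force $\ch(C_{\Sigma_1})$ through the $g_k$'s and Corollary~\ref{cor:gen_function_g_path} is therefore not just an unnecessary detour, it is comparing the wrong objects.

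The third paragraph contains the real gap: the generating-function formula you write for $\LLT(P_n;x,q)$ is false. A quick sanity check at $n=2$ shows it: $\LLT(P_2;x,q)=e_{1,1}+(q-1)e_2$, so $\omega\LLT(P_2;x,q+1)=h_{1,1}+qh_2$, whereas your proposed series (after $q\mapsto q+1$) produces $(1+q)h_2$ in degree $2$, and $h_{1,1}\neq h_2$. The correct closed form to use here is the elementary identity
\[
\LLT(P_n;x,q+1)=\sum_{\mu\vDash n}q^{\,n-\ell(\mu)}e_\mu,
\]
summed over compositions of $n$. With that in hand the proposition is immediate: apply $\omega$ and compare with the formula for $\ch(C_{\Sigma_1})$ from the first step. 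No decomposition theorem, no $F_1/F_2$, no $q\mapsto q+1$ substitution in a rational generating function is needed. Your instinct to sanity-check $n=2,3$ against Figures \ref{fig:P2} and \ref{fig:Bl_P2} was sound, and would have caught the incorrect LLT generating function before committing to it.
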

\begin{proof}
    We have the following characterization of 
    \[
    \LLT(P_n;x, q+1)=\sum_{\mu\vDash n} q^{n- \ell(\mu)}e[\mu].
    \]
    
    Analogously, for each sequence $\underline{S}:= (S_1\subset S_2\subset \ldots S_d\subsetneqq [n])$ we have an associated cone $\delta_{\underline{S}}$ of dimension $d$. The composition of $\underline{S}$ is simply $\mu(\underline{S})=|S_1|+|S_2\setminus S_1|+\ldots |[n]\setminus S_d|$ which has length $d+1$, in particular, the codimension of $\delta_{\underline{S}}$ is equal to $n-1-d=n-\ell(\mu(\underline{S}))$. Clearly, the $S_n$-module generated by all the cones with fixed composition $\mu$ is isomorphic to the induced representation $\ind_{S_{\mu}}^{S_n}$, hence we have that
    \[
    \ch(C_{\Sigma_1})=\sum_{\mu\vDash n} q^{n- \ell(\mu)}h[\mu].
    \]
    The result follows.
    \end{proof}

\bibliographystyle{amsalpha}
\bibliography{bibli}

\end{document}